\documentclass[11pt]{amsart}

\usepackage[margin=1.1in]{geometry}
\usepackage{xcolor}
\usepackage{amssymb, amsmath, amsthm, amsfonts, mathtools}
\usepackage[all]{xy}
\usepackage{enumerate}
\usepackage{mathrsfs}
\usepackage{needspace}

\usepackage{wrapfig}
\usepackage{bbm}
\usepackage{parskip}

\usepackage{hyperref}
\usepackage[nameinlink,capitalise,noabbrev]{cleveref}

  \definecolor{dark-red}{rgb}{0.6,0.15,0.15}
   \definecolor{dark-blue}{rgb}{0.15,0.15,0.6}
   \definecolor{medium-blue}{rgb}{0,0,0.5}

\hypersetup{
    colorlinks, 
    linkcolor=dark-red,
    citecolor=dark-blue, urlcolor=medium-blue
}

\usepackage{tikz}
\usepackage{tikz-cd}

\usetikzlibrary{
arrows.meta,
calc,
decorations.pathmorphing,
matrix,
positioning
}

\makeatletter
\def\l@subsection{\@tocline{2}{-7pt}{2.75pc}{5pc}{\small}}
\makeatother

\usepackage{thm-restate}

\newtheorem{thm}{Theorem}[section]
\newtheorem{lem}[thm]{Lemma}
\newtheorem{prop}[thm]{Proposition}
\newtheorem{cor}[thm]{Corollary}
\newtheorem{defn}[thm]{Definition}
\newtheorem{exmp}[thm]{Example}
\newtheorem{remark}[thm]{Remark}

\newcommand{\Ext}{\mathrm{Ext}}
\newcommand{\Tor}{\mathrm{Tor}}
\newcommand{\Tot}{\mathrm{Tot}}

\DeclareMathOperator{\Hom}{Hom}

\DeclareMathOperator{\coker}{coker}

\title{
Poincar\'e Duality and Quadratic Refinements over Laurent Rings 
}
\author{B\l a\.zej Ruba}
\address{University of Warsaw}
\email{bruba@fuw.edu.pl}

\author{Bowen Yang}
\address{Harvard University}
\email{byyang@alumni.caltech.edu}
\date{\today}

\begin{document}

\begin{abstract}
We develop a Poincaré duality theory for defects of nondegenerate sesquilinear pairings over Laurent polynomial rings. A key ingredient is a novel flat resolution of the character module, constructed from a triangulation of the sphere at infinity associated with a fan. The cup product on this resolution turns Poincaré duality on the sphere into canonical pairings between the resulting defect modules. In middle degrees, we construct distinguished quadratic refinements using equivariant cohomology of the sphere with the antipodal action. The effective replacement of the sphere with a projective space provides a geometric substitute for division by two. Applied to translation-invariant Pauli stabilizer codes, our results establish the nondegeneracy of higher-dimensional braiding pairings. They extend the two-dimensional T-junction formula for topological spin to higher dimensions, while giving it a geometric interpretation.
\end{abstract}
\maketitle
\setcounter{tocdepth}{2}

\vspace*{-1.25\baselineskip}
\tableofcontents

\setlength{\parskip}{0.5em}

\section{Introduction}

In this paper, we study finitely generated modules over Laurent polynomial rings equipped with sesquilinear pairings. The modules are not assumed to be projective, and the pairings, while nondegenerate, need not be unimodular. The failure of unimodularity and projectivity is described by homological obstructions, which we call the defects of the pairing. We develop a~Poincar\'e duality theory for these defects, culminating in the construction of canonical quadratic refinements for middle-degree pairings. 
 
This framework is motivated by condensed matter physics, where duality defects describe excitations and symmetries in quantum lattice systems known as Pauli stabilizer codes. However, the resulting interplay between algebra and geometry is of independent mathematical interest. We~formulate the setup and our main results in Section \ref{sec:results}, and~discuss the motivation from theoretical physics in Section \ref{sec:physics}. Further applications of our results to Pauli stabilizer codes will be explored in a forthcoming paper.

\subsection{Main results} \label{sec:results}
Fix integers $n\geq 2$ and $d \geq 1$, let $k=\mathbb Z/n\mathbb Z$, and consider the Laurent polynomial ring
\begin{equation}
R=k[\mathbb Z^d]=k[x_1^{\pm1},\ldots,x_d^{\pm1}]
\end{equation}
with the involution $\overline{x^\lambda}=x^{-\lambda}$. In this article we study finitely generated $R$-modules equipped with nondegenerate sesquilinear pairings, not necessarily unimodular. Thus, if
\begin{equation}
\Omega\colon M\times N\longrightarrow R
\end{equation}
is nondegenerate, the induced maps $N\to M^*$ and $M\to N^*$ are injective but need not be surjective. Their cokernels and higher derived analogues measure the failure of the pairing to be unimodular and of $M,N$ to be projective.

Our main results are a Poincaré duality theorem for these duality defects, and, in two natural specialized settings, a construction of quadratic refinements of self-pairings in the middle homological degree. Pairings between duality defects are constructed in terms of the cup product of cochains on the sphere \( S^{d-1} \), which we view as the boundary of \( \mathbb R^d \) at infinity. The construction of quadratic forms also involves the geometry of the real projective space \( \mathbb{RP}^{d-1} \).

The basic algebraic object behind the construction is the character module of $R$. We write
\begin{equation}
\widehat R
= k[[x_1^{\pm 1},\dots,x_d^{\pm 1}]] =
\left\{
\sum_{\lambda \in \mathbb Z^d} a_\lambda x^\lambda\ \middle|\ a_\lambda \in k
\right\}
\end{equation}
for the $R$-module of unrestricted formal Laurent series. It is an injective module, naturally identified with the character module $\Hom_k(R, k)$ of $R$. The first part of the paper gives a~finite flat resolution whose combinatorial structure is governed by a cellular structure of \( S^{d-1} \). The geometry of the sphere is relevant for the algebraic structure of $\widehat R$ because it parametrizes the asymptotic directions in which formal power series may extend. 

Let $\Lambda=\mathbb Z^d$ and $\Lambda_{\mathbb R}=\Lambda\otimes_{\mathbb Z}\mathbb R$. We consider its sphere at infinity
\begin{equation}
    S_\infty(\Lambda_{\mathbb R})=(\Lambda_{\mathbb R}\setminus\{0\})/\mathbb R_{>0} \cong S^{d-1}.
\end{equation}
A complete rational polyhedral fan $\Sigma$ gives a~cellular decomposition of this sphere. For every cone $\sigma\in\Sigma$, we introduce an $R$-submodule $\mathcal R(\sigma)\subset\widehat R$ generated by series supported in $\sigma$. If $\sigma$ is strongly convex, $\mathcal R(\sigma)$ is in fact a flat $R$-algebra. Putting these modules on the cells of $S^{d-1}$ gives a cellular cochain complex $C^\bullet(\Sigma;\mathcal R)$. Our first main result is the following.

\begin{restatable}{maintheorem}{Resolution}
\label{thm:cellular}
Let $\Sigma$ be a~complete rational polyhedral fan in $\Lambda_{\mathbb R}$. The sequence
\begin{equation}\label{eq:augmented_exact_complex}
0\longrightarrow R
\longrightarrow C^0(\Sigma;\mathcal R)
\longrightarrow \cdots
\longrightarrow C^{d-1}(\Sigma;\mathcal R)
\longrightarrow \widehat R
\longrightarrow0,
\end{equation}
is exact and every $C^p(\Sigma;\mathcal R)$ is flat over $R$. Therefore, the fan determines a flat resolution of minimal length.
\end{restatable}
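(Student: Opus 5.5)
The plan is to prove exactness one coefficient at a time. The idea is to write the complex \eqref{eq:augmented_exact_complex} as a filtered colimit of complexes of $k$-vector spaces of functions $\Lambda\to k$ with bounded supports, and then check exactness of each of these by topology of $S^{d-1}$. Flatness will be handled separately.

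\emph{Setup.} A cell of $S_\infty(\Lambda_{\mathbb R})$ of dimension $p$ corresponds to a cone $\sigma\in\Sigma$ of dimension $p+1$. Hence
\[
C^p(\Sigma;\mathcal R)=\bigoplus_{\dim\sigma=p+1}\mathcal R(\sigma)\otimes\mathrm{or}(\sigma),
\]
with the zero cone giving $\mathcal R(\{0\})=R$ in degree $-1$. The differential is the signed sum of the inclusions $\mathcal R(\tau)\subset\mathcal R(\sigma)$ for facets $\tau\prec\sigma$. The augmentation sums the inclusions $\mathcal R(\sigma)\subset\widehat R$ over the top-dimensional cones. Since $\mathcal R(\sigma)$ is generated by series supported in $\sigma$, an element of $\mathcal R(\sigma)$ is exactly a series supported in $F+\sigma$ for some finite $F\subset\Lambda$. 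Fix an exhaustion of $\Lambda$ by boxes $B_1\subset B_2\subset\cdots$. Let $K_m$ be the subcomplex in which the $\sigma$-component of a cochain is supported in $B_m+\sigma$, and the last term is $\widehat R$, i.e.\ all functions. Then \eqref{eq:augmented_exact_complex} is $\operatorname{colim}_m K_m$. Strictly, the last term is constant, so I will take the colimit of the truncated complexes and treat the surjectivity onto $\widehat R$ directly. Filtered colimits are exact, so it suffices to show each $K_m$ is exact, possibly after passing to $K_{m'}$ with $m'\gg m$ to absorb boundary effects.

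\emph{Pointwise reduction.} Each $K_m$ is a product over $\lambda\in\Lambda$ of complexes of $k$-vector spaces $K_m(\lambda)$. The complex $K_m(\lambda)$ consists of cochains on the set $U_\lambda=\{\sigma:\lambda\in B_m+\sigma\}$, followed by $k$ in the final slot. The set $U_\lambda$ is closed under passing to larger cones, so it is an open union of open cells of the sphere, together with the zero cone when $\lambda\in B_m$. Its cellular cochain complex computes the relative cohomology $H^\ast(S^{d-1},S^{d-1}\setminus U_\lambda)$, up to replacing $U_\lambda$ by a homotopy-equivalent closed complement.
\begin{itemize}
\item If $\lambda\in B_m$, then $U_\lambda=\Sigma$. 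The complex is the augmented cellular cochain complex of the whole sphere, followed by the map $H^{d-1}\to k$. The relevant map is $\varepsilon$, the sum over top cells, which is the fundamental-class evaluation. Exactness is then the statement $\tilde H^\ast(S^{d-1})=k[d-1]$.
\item If $\lambda\notin B_m$, the cones meeting $\lambda-B_m$ form a region $U_\lambda$ that is the open star of the set of directions of the convex set $\lambda-B_m$. This set of directions is a geodesically convex, hence contractible, proper subset of the sphere. Then $H^\ast(S,S\setminus U_\lambda)\cong k$ in degree $d-1$ and $0$ otherwise. Again the top class is killed isomorphically by the augmentation.
\end{itemize}
Completeness of the fan is used in both cases, and also for surjectivity onto $\widehat R$: every series splits as a sum of pieces supported in the top cones.

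\emph{Main obstacle.} The hardest step is the topological claim in the second case, namely that $U_\lambda$ (or rather $S\setminus U_\lambda$) has the right homotopy type for a general rational fan. Cones are not small relative to $\lambda-B_m$, so $U_\lambda$ is a combinatorial thickening rather than a metric neighbourhood. I would first try to show that $S\setminus U_\lambda$ deformation retracts onto the set of directions $v$ with $\langle v,w\rangle\le 0$ for a suitable $w$. Failing that, I would use a nerve or Quillen fiber-lemma argument on the poset of cones meeting the convex set. If boundary effects break exactness for a fixed $m$, the colimit lets me enlarge $m$ to kill the offending classes.

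\emph{Flatness and minimality.} For strongly convex $\sigma$ spanning $\Lambda_{\mathbb R}$, the module $\mathcal R(\sigma)$ is the localization $k[[\sigma\cap\Lambda]]\otimes_{k[\sigma\cap\Lambda]}R$, inverting monomials. The completion of the Noetherian affine semigroup ring $k[\sigma\cap\Lambda]$ at its maximal monomial ideal is flat, and localization preserves flatness. For lower-dimensional $\sigma$, the module is a tensor of the analogous object on $\operatorname{span}\sigma$ with a Laurent ring in complementary variables, hence again flat. Finally, the resolution has length $d$, matching the flat dimension of $\widehat R$, which I would show is $d$ by computing $\Tor^R_d(k,\widehat R)\neq0$ using the Koszul complex. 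Hence the resolution is of minimal length.
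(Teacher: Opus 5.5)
\emph{Where the gap is.} Your route differs from the paper's. The paper filters $C^\bullet(\Sigma;\mathcal R)$ by cones and uses $\mathcal R(\alpha)\cap\sum_i\mathcal R(\beta_i)=\sum_i\mathcal R(\alpha\cap\beta_i)$. This makes each graded piece $Q_\tau$ tensored with the doubly augmented cochain complex of the link fan in $\Lambda_{\mathbb R}/L_\tau$, so the only topology it needs is that of a sphere.

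Your skeleton is fine: $K_m=\prod_\lambda K_m(\lambda)$, and the case $0\in\lambda-B_m$ is the whole sphere. For $0\notin C:=\lambda-B_m$, exactness of $K_m(\lambda)$ is equivalent to acyclicity of the subcomplex $|Z_\lambda|$ of cells disjoint from $C$. By duality, this is the same as acyclicity of the open star $O$ (the union of open cells of cones meeting $C$) of the direction set $D$ of $C$.

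In your approach this acyclicity is the entire content of the theorem, and you leave it unproved. Knowing that $D$ is convex, hence contractible, says nothing yet about its combinatorial thickening $O$.

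\emph{Your proposed workarounds do not close it.} Your first strategy is false. Take $d=2$ with rays $(1,0),(-1,1),(-1,-1)$ and $C=[4,6]\times[-1,1]$. The cells disjoint from $C$ form the arc from $135^\circ$ to $225^\circ$. That arc contains no closed half-circle, so it cannot retract onto any set $\{\langle v,w\rangle\le 0\}$.

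The escape hatch ``enlarge $m$'' fails too. Since $H(K_m)=\prod_\lambda H(K_m(\lambda))$, a class nonzero at infinitely many $\lambda$ would have to die at all of them in a single $K_{m'}$. Nothing in your sketch gives that uniformity, so you genuinely need pointwise exactness.

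\emph{How to make it work.} The claim is true, and your nerve/Quillen fallback can be carried out as follows.
\begin{enumerate}
\item $O$ is homotopy equivalent to the order complex of the up-set $N$ of cones meeting $C$.
\item In a complete fan every cone is the intersection of the maximal cones containing it. So the maps $\tau\mapsto\{\sigma\text{ maximal}:\tau\subseteq\sigma\}$ and $S\mapsto\bigcap S$ form a Galois connection. This identifies $N$, up to homotopy, with the nerve of the closed cover of $D$ by the sets $D\cap\bar\sigma$, for $\sigma$ maximal in $N$.
\item Every nonempty intersection of members of this cover is the direction set of a pointed convex cone $\sigma_1\cap\dots\cap\sigma_r\cap\mathbb R_{\ge0}C$, hence contractible.
\item Borsuk's nerve theorem then gives $O\simeq D\simeq\mathrm{pt}$.
\item Hence $H^p(S^{d-1},|Z_\lambda|)\cong H_{d-1-p}(O)$ is $k$ in degree $d-1$ and zero otherwise, and $\int$ maps it isomorphically onto $k$.
\end{enumerate}

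Two further points need stating.
\begin{itemize}
\item $B_m$ must be real rational boxes. With lattice boxes, $\lambda-B_m$ is a finite set and $O$ can be disconnected. For example, in $d=2$ a thin cone of the fan can pass between the nine points of $\{9,10,11\}\times\{4,5,6\}$ without containing any of them; then $K_m(\lambda)$ is not exact.
\item With real boxes, you need the Gordan-type fact that the lattice points of $B_m+\sigma$ lie in finitely many lattice translates of $\sigma$. Only then is $K_m$ a subcomplex of $C^\bullet(\Sigma;\mathcal R)$. This is the same input the paper uses for its intersection lemma.
\end{itemize}

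Your flatness and minimality arguments agree with the paper's.
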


 The construction is independent of the fan up to quasi-isomorphisms. When the fan is simplicial, the cellular flat resolution carries a cup product. When it is also centrally symmetric, the~antipodal map on $S^{d-1}$ induces an action compatible with this product. 

In dimension one the resolution takes the familiar form, used in an essential way in our previous work \cite{ruba2025witt}:
\begin{equation}
0\longrightarrow k[x^{\pm1}]
\longrightarrow k((x))\oplus k((x^{-1}))
\longrightarrow k[[x^{\pm1}]]
\longrightarrow0.
\end{equation}
Here the two middle terms correspond to the two points of $S^0$, or equivalently to the two asymptotic directions $+\infty$ and $-\infty$.

We next explain the Poincar\'e duality. A \emph{duality pair} $(M,N,\Omega)$ consists of finitely generated $R$-modules $M,N$ and a nondegenerate involution-sesquilinear form
\begin{equation}
\Omega\colon M\times N\longrightarrow R.
\end{equation}
The form determines embeddings $N\to M^*$ and $M\to N^*$. We modify the usual Ext and Tor groups only in degree zero by setting
\begin{subequations}
\begin{align}
\widetilde\Ext^0(\overline M, R)
&=\operatorname{coker}(N\longrightarrow \Hom_R(\overline M,R)),\\
\widetilde\Tor_0(\widehat R,M)
&=\ker(\widehat R \otimes M\longrightarrow \Hom_k(N,k)),
\end{align}
\end{subequations}
declaring $\widetilde \Ext^p$ and $\widetilde \Tor_p$ to agree with $\Ext^p_R(\overline M,R)$ and $\Tor^R_p(\widehat R,M)$ for $p>0$. Equivalently, the modified extension and torsion modules are the (co)homology modules of the mapping cones of
\begin{equation}
    N\longrightarrow \operatorname{RHom}_R(\overline M,R),
\qquad
\widehat R\otimes_R^{\mathbb L}M\longrightarrow \Hom_k(N,k).
\end{equation}

The two constructions are dual to each other:
\begin{equation}
    \widetilde \Tor_p(\widehat R,M) \cong \Hom_k(\widetilde \Ext^p(\overline M,R), k).
\end{equation}
The modified extension and torsion functors vanish for a unimodular pairing between projective modules. We will refer to them as \emph{duality defects.}

Using the Alexander-Whitney formula for cup products, we define canonical pairings between the modified Tor groups. Integration over the fundamental class $[S^{d-1}]$ provides an algebraic analogue of the usual Poincar\'e pairing.

\begin{restatable}{maintheorem}{Poincare}
    \label{thm:intro-poincare}
Let $(M,N,\Omega)$ be a duality pair. The cup product in the cellular complex \eqref{eq:augmented_exact_complex} induces sesquilinear pairings, for every $0\leq p\leq d-1$, 
\begin{equation}
\operatorname{Br}_p\colon
\widetilde\Tor_p(\widehat R,M)
\times
\widetilde\Tor_{d-p-1}(\widehat R,N)
\longrightarrow\widehat R.
\label{eq:intro-Br}
\end{equation}
We also have canonical homomorphisms
\begin{equation}
\kappa_p\colon
\widetilde\Tor_p(\widehat R,N)
\longrightarrow
\widetilde\Ext^{d-p-1}(\overline M,R).
\end{equation}
If every $\widetilde\Ext^q(\overline M,R)$ has finite length, then the maps $\kappa_p$ are isomorphisms and the pairings $\operatorname{Br}_p$ are perfect. In particular, we have induced isomorphisms
\begin{subequations}
    \begin{align}
        \widetilde \Tor_p(\widehat R, M) & \cong \Hom_k(\widetilde \Tor_{d-p-1}(\widehat R,N),k), \\
        \widetilde \Tor_{d-p-1}(\widehat R, N) & \cong \Hom_k(\widetilde \Tor_{p}(\widehat R,M),k).
    \end{align}
\end{subequations}
\end{restatable}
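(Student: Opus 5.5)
We will use the cellular flat resolution of Theorem~\ref{thm:cellular} for a simplicial fan $\Sigma$. Write $C^\bullet=C^\bullet(\Sigma;\mathcal R)$, so that $R\to C^\bullet$ resolves $\widehat R$ with a shift: $\widehat R$ is quasi-isomorphic to the cone of $R\to C^\bullet$, placed so that $C^{d-1}$ maps onto $\widehat R$. Then $\widehat R\otimes^{\mathbb L}_R M$ is computed by $\mathrm{Cone}(M\to C^\bullet\otimes M)$, and
\[
\widetilde\Tor_p(\widehat R,M)
\]
is the homology of the further cone of the map to $\Hom_k(N,k)$. The first step is to describe this map at chain level. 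Via $\Omega$, an element of $C^{d-1}\otimes M$ maps to $C^{d-1}\otimes N^{*}$. Composing with the "integration" $C^{d-1}\to\widehat R$ (evaluation on the fundamental class $[S^{d-1}]$) and the coefficient at $x^0$ gives a functional on $N$. I would then check that the composite with $M\to C^\bullet\otimes M$ is nullhomotopic in a canonical way. This identifies $\widetilde\Tor_p(\widehat R,M)$ with the homology of an explicit complex
\[
T_M=\bigl[\,M\to C^0\otimes M\to\cdots\to C^{d-1}\otimes M\,\bigr]
\]
modified at the top by $\ker$ into $\Hom_k(N,k)$.

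To construct $\operatorname{Br}_p$, take cycles $a\in C^{d-1-p}\otimes M$ and $b\in C^{p}\otimes N$ (up to the cone indexing). Their Alexander--Whitney cup product lies in $C^{d-1}\otimes(M\otimes\overline N)$. Applying $\Omega$ gives an element of $C^{d-1}$, and pushing to $\widehat R$ via the augmentation gives $\operatorname{Br}_p(a,b)$. The cup product is $R$-bilinear on $C^\bullet$ because each $\mathcal R(\sigma)$ is an $R$-algebra and restriction maps are algebra maps. Sesquilinearity then comes from $\Omega$. Well-definedness on homology follows from the Leibniz rule together with the fact that the augmentation kills coboundaries from $C^{d-2}$. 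The degree-zero modification is handled as follows: the boundary terms involving the bottom copy of $M$ produce exactly $\Omega(m,\cdot)$ paired into $R\subset\widehat R$. The kernel condition in $\widetilde\Tor_0$ ensures that these boundary terms vanish after integration. This is the part where the tilde-modification is forced, and I would verify it by direct chain computation.

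For $\kappa_p$, I would use adjunction. The duality $\widetilde\Tor_p(\widehat R,M)\cong\Hom_k(\widetilde\Ext^p(\overline M,R),k)$ holds because $\widehat R=\Hom_k(R,k)$ is injective and tensor--hom adjunction commutes with the cones. Define $\kappa_p$ as the adjoint of $\operatorname{Br}_p$ composed with this identification, so that $\operatorname{Br}_p(x,y)$ at coefficient $x^0$ equals the duality pairing $\langle x,\kappa_p(y)\rangle$. Equivalently, and more conceptually, $\kappa_p$ is a connecting map. The cone of $R\to C^\bullet$ gives a triangle $R\to C^\bullet\to\widehat R[-(d-1)]$, with a shift so that $\widehat R$ sits in degree $d-1$ relative to $R$. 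Tensoring with $N$ and mapping into $\operatorname{RHom}(\overline M,R)$ via $\Omega$ yields a map from $\widetilde\Tor_p(\widehat R,N)$ to $\widetilde\Ext^{d-1-p}(\overline M,R)$.

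The proof that $\kappa_p$ is an isomorphism under the finite-length hypothesis is the main obstacle. I would argue as follows. Let $E=\mathrm{Cone}(N\to\operatorname{RHom}(\overline M,R))$, which has finite-length cohomology. We have a natural map
\[
\widehat R\otimes^{\mathbb L}E \;\leftarrow\; C^\bullet\otimes E\;\leftarrow\; E,
\]
and $\widetilde\Tor(\widehat R,N)$ relates to $\widehat R\otimes^{\mathbb L}(\cdots)$ of $E$ shifted. The key lemma is that for a finite-length module $L$ we have $\mathcal R(\sigma)\otimes L=0$ for every nonzero cone $\sigma$. Indeed, $L$ is killed by a power of a maximal ideal, which contains an element $f$ invertible in every $\mathcal R(\sigma)$ with $\sigma\neq0$: one can take $f$ of the form $1-x^\lambda$ with $\lambda$ avoiding the relevant faces, or a product of such. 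By contrast, for the zero cone $\mathcal R(0)=R$, which is not among the cells of the sphere. Hence $C^\bullet\otimes E\simeq 0$, so $\widehat R\otimes^{\mathbb L}E\simeq E[d-1]$ (up to sign conventions). Combining this with the triangle relating $E$ to the Tor-cone of $N$ shows that $\kappa_p$ is an isomorphism. Perfectness of $\operatorname{Br}_p$ then follows from the compatibility $\operatorname{Br}_p(x,y)=\langle x,\kappa_p(y)\rangle$ and the Matlis-type duality above. The two displayed isomorphisms follow by symmetry in $M\leftrightarrow N$, using $\overline\Omega$.
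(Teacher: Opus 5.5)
Your derived-category argument that the \emph{connecting} map $\widetilde\Tor_p(\widehat R,N)\to\widetilde\Ext^{d-p-1}(\overline M,R)$ is an isomorphism under the finite-length hypothesis is sound. It is the paper's argument in other language: the paper tensors the two-sided complex $\mathbb F_\Omega$ (a model of your $E$) with the exact augmented cellular complex, takes the edge-to-edge map of the resulting double complex with exact columns, and kills the intermediate rows with Lemma~\ref{lem:tensor_kills}. (In that lemma the unit has to be chosen cone by cone, namely $1-x^{D\lambda}$ with $0\neq\lambda\in\sigma\cap\Lambda$, not ``avoiding'' faces; a single element of a proper ideal need not be a unit in every $\mathcal R(\sigma)$.) The proposal nevertheless has two genuine gaps, both concerning $\operatorname{Br}_p$.

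First, your pairing is not well defined. Take $\alpha=f\otimes m\in C^{d-p-1}(\Sigma;\mathcal R)\otimes_R M$ and $\beta=g\otimes n$. The expression $\Omega(m,n)\,f\smile g$ is not balanced over $R$: rewriting $fr\otimes m$ as $f\otimes rm$ turns $r$ into $\bar r$, because $\Omega$ is sesquilinear. Passing to $\overline N$ does not help. Classes in $\widetilde\Tor_\bullet(\widehat R,N)$ are represented in $C^\bullet(\Sigma;\mathcal R)\otimes N$, and conjugating coefficients carries $\mathcal R(\sigma)$ to $\mathcal R(-\sigma)$. This is why the paper assumes $\Sigma$ centrally symmetric and orders the rays so that $a$ is order-preserving on each cell. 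It then uses $\int a^*\alpha\smile^\Omega\beta$, with $(a^*\alpha\smile^\Omega\beta)_\sigma=\Omega(\alpha_{-\sigma_{\leq d-p-1}},\beta_{\sigma_{\geq d-p-1}})$.

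Second, and more seriously, nothing in the proposal connects the cup-product pairing to the connecting map. You define $\kappa$ as the adjoint of $\operatorname{Br}$ and assert that it is ``equivalently'' the connecting map, but that equivalence is the whole content of perfectness.
\begin{itemize}
\item The adjoint a priori lands in $\widetilde\Ext^{p}(\overline M,R)^{\#\#}$, so in general it does not even produce the canonical $\kappa$ of the statement.
\item Your derived argument shows that the connecting map is an isomorphism, not that the adjoint of $\operatorname{Br}_p$ is. An abstract isomorphism $\widetilde\Tor_{d-p-1}(\widehat R,N)\cong\widetilde\Ext^p(\overline M,R)$ says nothing about a particular pairing.
\end{itemize}
The paper supplies the missing identity $\operatorname{Br}_p=c_{d,p}\operatorname{ev}_p\circ(\mathrm{id}\times\kappa_{d-p-1})$ in Proposition~\ref{prop:kappa_map}(b) by a chain-level zig-zag. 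For $u\in\widetilde\Tor_p(\widehat R,M)$ and $v\in\widetilde\Tor_{d-p-1}(\widehat R,N)$, a cocycle $\beta\in C^p(\Sigma;\mathcal N)$ representing $v$ is lifted through the double complex to elements $\beta_q$. The last of these, $\beta_{-1}\in F_p^*$, represents $\kappa_{d-p-1}(v)$. The cocycle $\alpha$ representing $u$ is lifted likewise in the double complex with $M$ and $N$ exchanged. One then rewrites $\int a^*\alpha\smile^\Omega\beta$ step by step:
\begin{itemize}
\item trade horizontal differentials for vertical ones via the descent relations;
\item integrate by parts, using the Leibniz rule and $\int\circ\delta=0$;
\item move $d_h$ across the evaluation pairing.
\end{itemize}
The computation ends at $\int a^*\alpha_{d-1}\smile^{\mathrm{ev}}\Delta\beta_{-1}=(-1)^d\operatorname{ev}(\int\alpha_{d-1},\beta_{-1})$, where the sign comes from $[S^{d-1}:-\sigma]=(-1)^d[S^{d-1}:\sigma]$ and $\int\alpha_{d-1}$ represents $u$. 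Without this computation, perfectness of $\operatorname{Br}_p$ is not established.
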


From a mathematical standpoint, the finite length hypothesis in Theorem \ref{thm:intro-poincare} describes the minimal departure from the setting of unimodular pairings between projective modules. Physically, it often characterizes models with mobile excitations. See Section~\ref{sec:physics}.

There are two natural situations in which the two entries of the pairing in \eqref{eq:intro-Br} are related to each other. First, let $P$ be a module carrying a nondegenerate $\varepsilon$-hermitian form, $\varepsilon \in \{ \pm 1 \}$. Then we have a duality pair $(P,P,\Omega)$, and~$\operatorname{Br}$ is a pairing between modules $\widetilde \Tor(\widehat R,P)$ in complementary degrees. If $d$ is odd, we obtain a middle-degree self-pairing on
\begin{equation}
    \widetilde\Tor_{\frac{d-1}{2}}(\widehat R,P).
\end{equation}

Secondly, suppose that $P$ is free with a unimodular $\varepsilon$-hermitian form, inducing an isomorphism $P \to \Hom(\overline P,R)$. Let $L\subset P$ be a Lagrangian submodule (not necessarily a direct summand). Setting $Q=P/L$, we obtain a duality between $Q$ and $L$. 
Composing $\operatorname{Br}$ with connecting homomorphisms $\widetilde \Tor_{p}(\widehat R,L) \cong \widetilde \Tor_{p+1}(\widehat R,Q)$, we obtain pairings $\operatorname{Br}^L$ between modules $\widetilde \Tor_\bullet(\widehat R,Q)$. For even~$d$, there is a self-pairing on
\begin{equation}
\widetilde\Tor_{\frac d2}(\widehat R,Q).
\end{equation}
The next theorem concerns the bilinear forms $b$ obtained by taking the constant terms of the self-pairings described above. Our main discovery is a natural construction of quadratic refinements. When $2$ is not invertible in $k$, one cannot simply construct a quadratic refinement of a~bilinear form by dividing its diagonal by two. This difficulty is overcome by replacing integration over the sphere with integration over the projective space. A technical obstruction arises because the relevant cochains on the sphere are not symmetric or skew-symmetric on the nose under the antipodal map, and thus do not directly descend to the quotient space. Using Steenrod's higher cup products, we find the homotopies correcting the failure of symmetry. Pairing the resulting equivariant cocycle with the twisted fundamental class of $\mathbb{RP}^{d-1}$ defines our quadratic functions. Passing from the sphere at infinity to projective space provides a~geometric substitute for division by two.

\begin{restatable}{maintheorem}{Refinement}
    \label{thm:intro-quadratic} 
The following statements hold.
\begin{enumerate}[(a)]
\item Let $P$ carry a nondegenerate $\varepsilon$-hermitian form and suppose that $d$ is odd. Let
\begin{equation}
A=\widetilde\Tor_{\frac{d-1}{2}}(\widehat R,P),
\end{equation}
and let $b\colon A\times A\to k$ be the scalar part of $\operatorname{Br}_{\frac{d-1}{2}}$. 

If
\(
\varepsilon(-1)^{\frac{d-1}{2}}=1,
\)
then $b$ is alternating. 

If
\(
\varepsilon(-1)^{\frac{d-1}{2}}=-1,
\)
then $b$ is symmetric and admits a distinguished quadratic refinement
\begin{equation}
q\colon A\longrightarrow k,
\end{equation}
explicitly defined in Section \ref{sec:quadratic}.
For $d=1$, the construction of $q$ additionally requires that the scalar part of $\Omega$ is alternating.

\item Let $P$ be free with a unimodular $\varepsilon$-hermitian form, and let $L\subset P$ be Lagrangian. Set $Q=P/L$, and suppose that $d$ is even. Let
\begin{equation}
A=\widetilde\Tor_{\frac d2}(\widehat R,Q),
\end{equation}
and let $b^L\colon A\times A\to k$ be the scalar part of $\operatorname{Br}^L_{\frac{d}{2}}$. 

If
\(
\varepsilon(-1)^{\frac d2}=-1,
\)
then $b^L$ is alternating. 

If
\(
\varepsilon(-1)^{\frac d2}=1,
\)
then $b^L$ is symmetric and admits a distinguished quadratic refinement
\begin{equation}
q^L\colon A\longrightarrow k,
\end{equation}
explicitly constructed in Section \ref{sec:quadratic}.
For $d=2$, the construction of $q^L$ additionally requires that the scalar part of the original form on $P$ is alternating.
\end{enumerate}
In both cases the quadratic refinement satisfies
\begin{equation}
q(mu)=m^2q(u),
\qquad
q(u+v)-q(u)-q(v)=b(u,v),
\end{equation}
with the analogous formula for $q^L$ and $b^L$.
\end{restatable}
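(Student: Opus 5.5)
We will work entirely at the cochain level in the cellular resolution of Theorem~\ref{thm:cellular}, with $\Sigma$ chosen complete, simplicial and centrally symmetric. Such fans exist: take a symmetric triangulation of the boundary of the cross-polytope, or subdivide any fan together with its antipode. The resolution is fan-independent up to quasi-isomorphism, so nothing depends on this choice once naturality has been checked.

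\textbf{Step 1: symmetry of $b$.} A class $u\in\widetilde\Tor_p(\widehat R,P)$ is represented by a cocycle $\alpha\in C^{d-1-p}(\Sigma;\mathcal R)\otimes P$, together with the correction data coming from the mapping cone against $\Hom_k(P,k)$. The pairing $\operatorname{Br}_p$ is $\int_{[S^{d-1}]}\Omega(\alpha\smile\beta)$, and $b$ is its constant term. Swapping arguments introduces three signs:
\begin{itemize}
\item $(-1)^{(d-1-p)p}$ from graded commutativity of the cup product, which holds only up to the Steenrod homotopy $\smile_1$;
\item $\varepsilon$ from $\Omega$, together with the involution $x\mapsto x^{-1}$ on coefficients;
\item the involution on $\widehat R$, which is realised on cells by the antipodal map $a$ of $S^{d-1}$. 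Since $a$ has degree $(-1)^d$, it acts on $[S^{d-1}]$ by that sign.
\end{itemize}
For $p=\frac{d-1}{2}$ the product of these signs reduces to $\varepsilon(-1)^{\frac{d-1}{2}}$. For part (b), taking the connecting isomorphism $\widetilde\Tor_{d/2-1}(\widehat R,L)\cong\widetilde\Tor_{d/2}(\widehat R,Q)$ shifts the degree by one and yields $\varepsilon(-1)^{d/2}$ with the opposite convention. This gives the dichotomy claimed in the theorem.

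\textbf{Step 2: the alternating case.} Here $b(u,u)=0$ must hold on the nose. The $\smile_1$ correction term is $\alpha\smile_1\alpha$, paired against $\delta$ of something or against the symmetric fundamental chain. In the sign case where $b$ is alternating, the standard identity $\alpha\smile_1\alpha\sim \mathrm{Sq}$-type terms lets the diagonal pair to zero via an antisymmetric chain.

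\textbf{Step 3: the symmetric case.} The strategy is as follows.
\begin{enumerate}
\item Form $\alpha\smile a^*\alpha$, where $a^*$ includes the coefficient involution.
\item Use the higher products $\smile_i$ to build a cochain $\theta$ with $\delta\theta=\alpha\smile a^*\alpha-(\pm)a^*(\alpha\smile a^*\alpha)$. This is the standard construction of an equivariant cocycle for the $\mathbb Z/2$ action, in the style of the Steenrod square construction on $W\otimes C^{\otimes 2}$.
\item Define $q(u)$ as the constant term of the equivariant cocycle evaluated on the fundamental chain of $S^{d-1}$ with the antipodal $\mathbb Z/2$ action. Because the action is free, this is the same as the (twisted, when needed) fundamental class of $\mathbb{RP}^{d-1}$: one sums over one representative of each antipodal pair of cells.
\end{enumerate}

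It then remains to verify three things.
\begin{itemize}
\item \emph{Well-definedness.} The value is independent of the representative $\alpha$ and of the mapping-cone correction; changing $\alpha$ by a coboundary changes the cocycle by an equivariant coboundary.
\item \emph{Polarization.} $q(u+v)-q(u)-q(v)$ equals the cross term $\alpha\smile a^*\beta$ summed over half the cells plus its antipodal image. Summed over $\mathbb{RP}^{d-1}$, this is exactly $\int_{S^{d-1}}$ of the pairing, that is $b(u,v)$.
\item \emph{Homogeneity.} $q(mu)=m^2q(u)$ holds by bilinearity of the construction.
\end{itemize}

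\textbf{Low-dimensional cases.} For $d=1$ (respectively $d=2$ in part (b)), the degree-zero $\smile_{d-1}$ term involves the diagonal $\Omega(x,x)$ of the form itself. We need its scalar part alternating so that this term vanishes or is well defined. This explains the extra hypothesis in the statement.

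The main obstacle is Step 3(b): producing explicit homotopies $\theta$ that are compatible with the sesquilinear coefficient twist, the $\mathcal R(\sigma)$ coefficient modules, and the mapping-cone corrections, and then proving independence of choices. I would first handle the case where $P$ is projective and the form is unimodular, so that no cone corrections appear, and then extend by naturality.
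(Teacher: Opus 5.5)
\textbf{There is a genuine gap.} Your picture of the symmetric case in (a) matches the paper's construction: an equivariant cocycle built from higher cup products, evaluated on the (twisted) fundamental class of $\mathbb{RP}^{d-1}$. What is missing is the identity that actually carries the theorem.

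Since $2$ need not be invertible in $k$, the antisymmetry from Step 1 does not give alternation. Step 2 is circular (``in the sign case where $b$ is alternating'') rather than an argument. Alternation does not come from Steenrod-square-type terms vanishing. It comes from $[S^{d-1}]$ being the transfer $(1-a_*)\gamma_0$ of a chain on the quotient.

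Likewise, ``summing over one representative of each antipodal pair of cells'' computes only $\langle\gamma_0,\xi_0\rangle$. That is not well defined, because $\xi_0=a^*\alpha\smile^\Omega\alpha$ is $\chi$-symmetric only up to $\delta\xi_1$.

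What you need is the exact identity, for a cocycle $\alpha\in C^{\frac{d-1}{2}}(\Sigma;\mathcal P)$ and $\chi=\varepsilon(-1)^{\frac{d-1}{2}}$,
\[
\int a^*\alpha\smile^\Omega\alpha=\langle\Gamma,\Xi(\alpha)\rangle-\chi\,\overline{\langle\Gamma,\Xi(\alpha)\rangle}.
\]
Here $\Xi(\alpha)=(a^*\alpha\smile_i^\Omega\alpha)_{i\geq 0}$ is the whole descent tower, satisfying $\chi a^*\xi_i=(-1)^i\xi_i+\delta\xi_{i+1}$; once $d\geq 5$ this is more than a single homotopy $\theta$. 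Also, $\Gamma=(\gamma_0,\gamma_1,\dots)$ is the whole equivariant fundamental chain, with $(1-a_*)\gamma_0=[S^{d-1}]$ and $\partial\gamma_i+((-1)^{d+i}-a_*)\gamma_{i+1}=0$. The identity follows by telescoping the two descent systems against each other.

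Since $(\overline f)_0=f_0$, taking scalar parts gives $b(u,u)=(1-\chi)\langle\Gamma,\Xi(\alpha)\rangle_0$. This one identity yields alternation for $\chi=1$ and $b(u,u)=2q(u)$ for $\chi=-1$. Its polarized version gives $q(u+v)-q(u)-q(v)=b(u,v)$ exactly; your ``half the cells plus the antipodal image'' is only its $\gamma_0$-term.

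The three signs you list in Step 1 multiply to $-\chi$, not $\chi$. As you wrote it, the symmetric case would land at $\chi=+1$, contrary to the theorem.

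Part (b) is not covered by Step 3 at all. There the quadratic cochain is not $a^*\alpha\smile\alpha$. For a $P$-valued lift $\alpha\in C^{\frac{d-2}{2}}(\Sigma;\mathcal P)$ of a $Q$-valued cocycle, one takes
\[
\xi_i(\alpha)=a^*\alpha\smile_i^\Omega\delta\alpha+\varepsilon\,a^*(a^*\alpha\smile_{i-1}^\Omega\alpha).
\]
Two facts rest on the isotropy of $L$ (every $\smile_i^\Omega$-product of two $L$-valued cochains vanishes):
\begin{enumerate}
\item $\Xi(\alpha)$ is a $\delta_{\Tot}$-cocycle.
\item $\langle\Gamma,\Xi(\alpha)\rangle_0$ is unchanged when an $L$-valued cochain is added to the lift.
\end{enumerate}
The analogue of the identity above is $\int a^*\alpha\smile^\Omega\delta\alpha=\langle\Gamma,\Xi(\alpha)\rangle+\chi\,\overline{\langle\Gamma,\Xi(\alpha)\rangle}$ with $\chi=\varepsilon(-1)^{\frac d2}$.

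Your fallback plan, treating $P$ projective with a unimodular form first and extending by naturality, is vacuous: in that case all $\widetilde\Tor_p(\widehat R,P)$ vanish.

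The ``mapping-cone corrections'' you name as the main obstacle do not occur in middle degree, except at the edge of the resolution ($d=1$ in (a), $d=2$ in (b)). There the real issue is invariance under $\alpha\mapsto\alpha+\Delta(p)$. For $d=1$, the condition $\Omega(\cdot,\int\alpha)=0$ kills the mixed terms. In both edge cases the diagonal term vanishes precisely because $\Omega(p,p)_0=0$, which is where that extra hypothesis enters.
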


The article is organized as follows. Section~\ref{sec:physics} elaborates on the physical motivations behind the theory. Section~2 introduces the cellular flat resolution of $\widehat R$, including the cup product and antipodal symmetry. Section~3 introduces defects of duality pairs and pairings announced in Theorem \ref{thm:intro-poincare}. The proof of Theorem \ref{thm:intro-poincare} given there relies on the analysis of a double complex, which is defined as the tensor product of a two-sided resolution associated with $\Omega$ and the cellular resolution of $\widehat R$.  
We~then specialize the result to modules with a self-pairing and to Lagrangian submodules. Section~4 constructs the quadratic refinements. The main ingredients there are equivariant (co)homology for the antipodal action on the sphere and~higher cup products. Appendix~A records facts about double complexes used throughout the article, while~Appendix~B collects facts about supports and dimensions of character modules.

\subsection{Motivation from physics} \label{sec:physics}

The algebraic structures considered in this paper arose from the study of translation-invariant Pauli stabilizer codes~\cite{CRSS,Gottesman}. These are quantum lattice models that have been studied in the context of quantum error correction and as tractable realizations of nontrivial topological order. 

Following the Laurent-polynomial formalism of Haah~\cite{haah2013commuting}, translation symmetry allows one to organize local Pauli operators into an $R$-module $P$.
The commutation relations of Pauli operators are encoded by a hyperbolic skew-hermitian form
\begin{equation}
\Omega\colon P\times P\longrightarrow R.
\end{equation}
The scalar part of $\Omega(p,q)$ encodes the commutator phase of operators corresponding to $p$ and $q$, while other coefficients record commutators with translates. A translation-invariant stabilizer group determines an $R$-submodule $L\subset P$. Since the stabilizers commute with one another, $L$~is isotropic. For the class of stabilizer codes considered here one further has
\begin{equation}
L=L^\perp.
\end{equation}
This is the exactness condition of~\cite{haah2013commuting}; in our terminology $L$ is a Lagrangian submodule~\cite{ruba2024homological,ruba2025witt}. This gives one of the two main examples of a duality pair considered in the paper. 

In two dimensions, a classification of Lagrangian stabilizer codes with prime-dimensional qudits was obtained in~\cite{haah2021classification}. Allowing composite qudit dimension, Ellison et al.~\cite{ellison2022pauli} constructed stabilizer code representatives for all Witt-trivial abelian anyon theories. Conversely, results in \cite{ruba2024homological,ruba2025witt} establish that every two-dimensional Lagrangian stabilizer code is characterized by a Witt-trivial abelian anyon theory. Physically, Witt-triviality is often interpreted as admitting a gapped boundary condition, see e.g.\ \cite{kapustin2011topological}. In three dimensions, the landscape of stabilizer codes is much richer and less understood. In particular, the relevant duality defects need not have finite length. Such models admit local excitations, called \emph{fractons}, that cannot be transported in arbitrary directions by the action of local operators. 
 
The second main example arises in the study of stabilizer codes in geometries with boundary. In~\cite{ruba2025witt}, we associated to a translation-invariant stabilizer state restricted to a half-space a boundary operator module $P_\partial$; related boundary constructions appear in~\cite{liang2024operator,schuster2023holographic, haah2023nontrivial}. Commutation of boundary operators gives a nondegenerate skew-hermitian form
\begin{equation}
\Omega_\partial\colon P_\partial\times P_\partial\longrightarrow R_\partial,
\end{equation}
where $R_\partial$ is the Laurent polynomial ring associated with translations parallel to the boundary. The form need not be unimodular, and in the terminology of \cite{ruba2024homological} $P_\partial$  
is called quasi-symplectic. Therefore, one obtains the duality pair $(P_\partial,P_\partial,\Omega_\partial)$.
    
Another source of duality pairs is the algebraic theory of planon-only fracton orders developed in~\cite{wickenden2024planon,wickenden2025excitation}. There the superselection sectors form a finitely generated module $S$ over a~one-variable Laurent polynomial ring. For a theory of finite fusion order $n$, the mutual statistics of planons can be assembled into an $R$-valued hermitian form
\begin{equation}
B\colon S\times S\longrightarrow \mathbb Z_n[t^{\pm1}].
\end{equation}
The condition of $p$-modularity, or equivalently remote detectability, is the nondegeneracy of this form. Thus $(S,S,B)$ is a duality pair. 

For the Lagrangian stabilizer codes described above, we introduced
in~\cite{ruba2024homological} the \emph{charge modules}
\begin{equation}
Q^i=\Ext_R^{i+1}(\overline{P/L},R),
\end{equation}
which are instances of the duality defects considered in
the present paper. If the charge modules have finite length, they
describe mobile excitations: point-like for $i=0$, string-like for $i=1$, and so on. Moreover, the complementary modules $Q^i$ and $Q^{d-2-i}$ carry a braiding pairing.
In~\cite{ruba2024homological}, we constructed this pairing using an algebraic \v{C}ech complex, but left its nondegeneracy open.
Theorem~\ref{thm:intro-poincare} closes this gap: applied to the
duality pair $(P/L,L)$, it~shows that the braiding pairing is perfect
whenever the charge modules have finite length.

The analysis of charge modules has been taken up in recent work on the classification of CSS codes \cite{SongKoszul}. In \cite{yang2026classification}, invariants of stabilizer codes have been compared with framed topological quantum field theories. We mention also the study \cite{geiko2025} of invertible stabilizer codes, which are characterized by the vanishing of all charge modules. 

Motivated by spin--statistics and its higher-dimensional analogues, we also expected the middle-dimensional braiding to carry additional structure.  We proposed in~\cite{ruba2024homological} that in dimensions divisible by $4$ the middle-dimensional braiding should be alternating. In dimensions congruent to $2$ modulo $4$, we expected it to admit a distinguished quadratic refinement, generalizing the topological spin familiar in two dimensions~\cite{levin2003fermions,haah2021classification}. Theorem \ref{thm:intro-quadratic} confirms these expectations. 

In dimension two, an explicit formula for the quadratic form, often called the T-junction, was already known~\cite{levin2003fermions,haah2021classification}. Our work extends this topological spin function to higher dimensions and provides its natural geometric interpretation.

Other works on generalized statistics of extended or fractonic excitations in stabilizer codes include~\cite{FractonStatistics,kobayashi2024generalized,feng2026pauli}.

The Lagrangian case and the self-pairing case are related by the bulk--boundary correspondence arising from the half-space construction, see~\cite{ruba2025witt} for a precise definition. We~expect a direct relation between defects of $(P/L,L)$ over $R_\partial[x_d^{\pm 1}]$ and those of $(P_\partial,P_\partial)$ over $R_\partial$. In~two dimensions, this prediction has been verified in~\cite{liang2024operator,ruba2025witt}.

\section{Resolution of the character module}
\subsection{Laurent rings and character modules}

Fix an integer $n\geq 2$ and let $k=\mathbb Z/n\mathbb Z$. We set $\Lambda=\mathbb Z^d$ and $R=k[\Lambda]=k[x_1^{\pm1},\ldots,x_d^{\pm1}]$. For $\lambda=(\lambda_1,\ldots,\lambda_d)\in \Lambda$, we write $x^\lambda=x_1^{\lambda_1}\cdots x_d^{\lambda_d}$. The ring $R$ is equipped with the \emph{involution} $\overline{x^\lambda}=x^{-\lambda}$. If $r=\sum_{\lambda\in \Lambda}r_\lambda x^\lambda$, we call $r_0$ the \emph{scalar part} of $r$.

We write $\widehat R$ for the $R$-module of unrestricted formal series
\begin{equation}
\widehat R
=
\left\{
\sum_{\lambda \in \Lambda}f_\lambda x^\lambda \ \middle|\ f_\lambda \in k
\right\}.
\end{equation}
Multiplication by Laurent polynomials gives $\widehat R$ the structure of an $R$-module. It is not a ring, since the product of two unrestricted series is not defined in general. We emphasize that $\widehat R$ \emph{does not} denote an adic completion. For $f=\sum_\lambda f_\lambda x^\lambda\in\widehat R$, we set 
\begin{equation}
 \operatorname{supp}(f)=\{\lambda \in \Lambda\mid f_\lambda \neq0\}.   
\end{equation}

Let $R^\#=\operatorname{Hom}_k(R,k)$, with $R$-module structure $(r\varphi)(s)=\varphi(\overline r s)$. We have the isomorphism
\begin{equation}
\widehat R\xrightarrow{\ \sim\ }R^\#,
\qquad
f\longmapsto\bigl(r\longmapsto(\overline r f)_0\bigr).
\end{equation}
Equivalently, the functional corresponding to $f=\sum_\lambda f_\lambda x^\lambda$ sends $x^\lambda$ to $f_\lambda$. 

Recall that the character module of $R$ is defined as $\operatorname{Hom}_{\mathbb Z}(R,\mathbb Q/\mathbb Z)$, regarded as an $R$-module by $(r\varphi)(s)=\varphi(\overline r s)$. Since $R$ is annihilated by $n$, every such character takes values in the subgroup $(\mathbb Q/\mathbb Z)[n] := \{ a \in \mathbb Q/\mathbb Z \, | \, na =0 \pmod{\mathbb Z} \}$. The isomorphism $k\to(\mathbb Q/\mathbb Z)[n]$ given by $a\mapsto a/n\pmod{\mathbb Z}$ therefore induces
\begin{equation}
\operatorname{Hom}_{\mathbb Z}(R,\mathbb Q/\mathbb Z)
\cong
\operatorname{Hom}_k(R,k)
=
R^\#.
\end{equation}
Together with the preceding isomorphism $\widehat R\cong\operatorname{Hom}_k(R,k)$, this identifies both $\widehat R$ and $R^\#$ with the character module of $R$. We will use the two notations interchangeably.

For an $R$-module $M$, let $\overline M$ denote the $R$-module with the same underlying abelian group as $M$ and scalar multiplication $r\overline m=\overline{\overline r m}$. Thus, homomorphisms $\overline M\to L$ may be identified with antilinear maps $M\to L$. We define
\begin{equation}
M^*=\operatorname{Hom}_R(\overline M,R),
\qquad
\widehat M=\widehat R\otimes_RM,
\qquad
M^\#=\operatorname{Hom}_R(\overline M,\widehat R).
\end{equation}

\begin{defn} \label{def:sesquilinear}
Let $M$ and $N$ be $R$-modules. A sesquilinear pairing of $M$ and $N$ is an additive map $\Omega:M\times N\to R$ satisfying
$\Omega(rm,sn)=\overline r s\,\Omega(m,n)$ for $r,s\in R$, $m\in M$, and $n\in N$.

The pairing is nondegenerate if the maps
\begin{equation}
\begin{aligned}
\lambda_N &:N\longrightarrow M^*,
&n&\longmapsto\Omega(\mathord\cdot,n),\\
\lambda_M &:M\longrightarrow N^*,
&m&\longmapsto\overline{\Omega(m,\mathord\cdot)}
\end{aligned}
\end{equation}
are injective. It is called unimodular if both maps are isomorphisms.

A \emph{duality pair} is a triple $(M,N,\Omega)$, where $M,N$, are finitely generated $R$-modules and $\Omega$ is a~nondegenerate sesquilinear form.
\end{defn}

If $M=N$ and $\varepsilon\in\{\pm1\}$, we say $\Omega$ is $\varepsilon$-hermitian if $\Omega(m,m')=\varepsilon \, \overline{\Omega(m',m)}$.

The inclusion $R\subset\widehat R$ induces an inclusion $M^*\subset M^\#$. Composition with the scalar-part map $\widehat R\to k$ gives an isomorphism $M^\#\cong\operatorname{Hom}_k(M,k)$. Observe that every homomorphism of abelian groups $M\to k$ is automatically $k$-linear. Therefore,
\begin{equation}
\operatorname{Hom}_{\mathrm{Ab}}(M,k)
\cong
\operatorname{Hom}_R(\overline M,R^\#).
\end{equation}
We refer to $M^\#$ as the character module of $M$. It also coincides with the Pontryagin dual of the underlying abelian group, using the identification of $k$ with $(\mathbb Q/\mathbb Z)[n]$. 

The preceding isomorphism has a simple description. Given an additive map $\varphi:M\to k$, the~corresponding $R$-linear map $\widetilde\varphi:\overline M\to\widehat R$ is
\begin{equation}
\widetilde\varphi(\overline m)
=
\sum_{\lambda\in \Lambda}\varphi(x^\lambda m)x^\lambda.
\end{equation}
The inverse correspondence takes the scalar part: $\varphi(m)=\widetilde\varphi(\overline m)_0$.

Suppose that $M$ is finitely generated, and choose generators $m_1,\ldots,m_t$. We define the translation support of $\varphi$ by
\begin{equation}
\operatorname{supp}_\Lambda(\varphi)
=
\{\lambda \in \Lambda\mid \varphi(x^\lambda m_j)\neq0
\text{ for some }j\}.
\end{equation}
We say that $\varphi$ has bounded support if this set is finite. This condition is independent of the choice of a finite generating set. 

\begin{remark}
Under the isomorphism
$\operatorname{Hom}_{\mathrm{Ab}}(M,k)\cong
\operatorname{Hom}_R(\overline M,R^\#)$, the module
$M^* \subset M^{\#}$ consists precisely of the characters with bounded support. Indeed, $\widetilde\varphi$ takes values in $R$ if and only if the series $\widetilde\varphi(\overline m_j)$ has finite support for a finite collection of generators $m_j$.
\end{remark}

We also note that an $R$-valued sesquilinear form is determined by its scalar part. Indeed,
\begin{equation}
\Omega(m,n)
=
\sum_{\lambda \in \Lambda}\Omega(x^\lambda m,n)_0 x^\lambda.
\end{equation}
For every $n\in N$, the character $\Omega(\mathord\cdot,n)_0:M\to k$ has bounded support because the corresponding map $\overline M\to R^\#$ takes values in $R$. Conversely, a $k$-valued pairing with this bounded-support property determines an $R$-valued sesquilinear form by the formula above.

The ring $k$ is self-injective, so the functor $(-)^\#$ is exact. In other words, $\widehat R$ is an injective $R$-module.

\begin{lem} \label{lem:star_hat_sharp}
If $F$ is a finitely generated free $R$-module, then evaluation induces a natural isomorphism
\begin{equation}
\widehat R\otimes_RF^*
\cong
F^\#.
\end{equation}
\end{lem}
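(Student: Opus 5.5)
The plan is to write down the evaluation map explicitly and check that it is an isomorphism by reducing to the rank one case. For any $R$-module $F$, define
\begin{equation}
\widehat R\otimes_R F^*\longrightarrow F^\#,\qquad f\otimes\varphi\longmapsto\bigl(\overline x\mapsto f\,\varphi(\overline x)\bigr).
\end{equation}
Here $\varphi\colon\overline F\to R$ is $R$-linear, and $f\varphi(\overline x)\in\widehat R$ is defined because $\widehat R$ is an $R$-module. First I would check that the map is well defined. It is $R$-balanced, since $(rf)\varphi(\overline x)=f\,(r\varphi)(\overline x)$. For each fixed $f\otimes\varphi$, the map $\overline x\mapsto f\varphi(\overline x)$ is $R$-linear on $\overline F$, because $\varphi$ is. The construction is visibly natural in $F$: a homomorphism $g\colon F'\to F$ induces $g^*$ and $g^\#$, and the squares commute by direct substitution.

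Both sides are additive functors of $F$ and commute with finite direct sums. The tensor product does so, as do $(-)^*=\operatorname{Hom}_R(\overline{(-)},R)$ and $(-)^\#=\operatorname{Hom}_R(\overline{(-)},\widehat R)$. The evaluation map is compatible with these decompositions. So, after choosing a basis $F\cong R^t$, it suffices to treat $F=R$.

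For $F=R$, the dual $R^*=\operatorname{Hom}_R(\overline R,R)$ is free of rank one, generated by $\varphi_0\colon\overline 1\mapsto 1$ (using $\overline r\mapsto \overline{\overline r}\cdot 1$ appropriately). Hence $\widehat R\otimes_R R^*\cong\widehat R$ via $f\otimes\varphi_0\mapsto f$. Similarly, $R^\#=\operatorname{Hom}_R(\overline R,\widehat R)\cong\widehat R$ by evaluation at $\overline 1$, since $\overline R$ is free of rank one on $\overline 1$. Under these identifications, the evaluation map sends $f\mapsto f\varphi_0(\overline 1)=f$, so it is the identity.

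There is one place that needs care, and I expect it to be the main bookkeeping point. One must verify that the conjugate module $\overline R$ is indeed free on $\overline 1$ and that the involution twists cancel consistently. The twist appears in both $F^*$ and $F^\#$ in the same way, so this should be routine.

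Finite generation is used only to pass through the finite direct sum. For infinite rank, $\operatorname{Hom}$ turns sums into products and the tensor product does not commute with products, so the statement would fail.
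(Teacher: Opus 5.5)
Your proposal is correct and follows the paper's proof: by additivity you reduce to $F=R$, where the evaluation map becomes the identification $\widehat R\cong R^\#$. Your evaluation at $\overline 1$, composed with the scalar-part map, is exactly the paper's map $f\mapsto\bigl(r\mapsto(\overline r f)_0\bigr)$; you just write out the routine well-definedness and naturality checks that the paper leaves implicit.
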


\begin{proof}
It is enough to consider $F=R$, in which case the claim is the identification $\widehat R\cong R^\#$ above.
\end{proof}

The functor $(-)^*$ is left exact, while the functor $M\mapsto\widehat M$ is right exact. Their derived functors are $\operatorname{Ext}^p_R(\overline M,R)$ and $\operatorname{Tor}^R_p(\widehat R,M)$, respectively.

\begin{prop} \label{prop:Ext-Tor-duality}
For every finitely generated $R$-module $M$ and every $p\geq0$, there is a natural isomorphism
\begin{equation}
\operatorname{Tor}^R_p(\widehat R,M)
\cong
\operatorname{Ext}^p_R(\overline M,R)^\#.
\end{equation}
\end{prop}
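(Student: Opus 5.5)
Since $R$ is Noetherian and $M$ is finitely generated, I will choose a resolution $F_\bullet \to M \to 0$ by finitely generated free $R$-modules. Then $\operatorname{Tor}^R_p(\widehat R,M)$ is computed by $H_p(\widehat R\otimes_R F_\bullet)$, and $\operatorname{Ext}^p_R(\overline M,R)$ is computed by $H^p(F_\bullet^*)$. Here $F_\bullet^*=\operatorname{Hom}_R(\overline{F_\bullet},R)$, and $\overline{F_\bullet}$ is a free resolution of $\overline M$, since conjugation is an exact autoequivalence preserving freeness.

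\textbf{Step 1.} By Lemma~\ref{lem:star_hat_sharp}, for each $p$ there is an isomorphism $\widehat R\otimes_R F_p^* \cong F_p^\#$ given by evaluation. I will apply this to $F_p^*$ in place of $F$: since $F_p^{**}\cong F_p$ canonically for finitely generated free modules, this gives $\widehat R\otimes_R F_p\cong (F_p^*)^\#$. Naturality of evaluation in $F$ shows that these isomorphisms commute with the differentials. We therefore get an isomorphism of chain complexes
\[
\widehat R\otimes_R F_\bullet \cong (F_\bullet^*)^\#.
\]

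\textbf{Step 2.} The functor $(-)^\#=\operatorname{Hom}_R(\overline{(-)},\widehat R)$ is exact, because $\widehat R$ is injective; equivalently, $\operatorname{Hom}_k(-,k)$ is exact since $k$ is self-injective. An exact contravariant functor commutes with taking (co)homology, so
\[
H_p\bigl((F_\bullet^*)^\#\bigr)\cong H^p(F_\bullet^*)^\#=\operatorname{Ext}^p_R(\overline M,R)^\#.
\]
Combining with Step 1 gives the claimed isomorphism.

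\textbf{Step 3.} Independence of the choice of resolution and naturality in $M$ follow by the standard argument. A morphism $M\to M'$ lifts to a chain map of resolutions that is unique up to homotopy. All identifications above are natural in the complex, and homotopies are preserved by the additive functors involved.

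The only point needing care is the bookkeeping of the conjugations, namely checking that the evaluation map $\widehat R\otimes F \to \operatorname{Hom}_R(\overline{F^*},\widehat R)$ is $R$-linear with the twisted module structures. This reduces to the rank-one case $F=R$, which is exactly the identification $\widehat R\cong R^\#$.
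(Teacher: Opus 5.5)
Your proposal is correct and takes the same approach as the paper: resolve $M$ by finitely generated free modules, identify $\widehat R\otimes_R F_\bullet$ with $(F_\bullet^*)^\#$ degreewise by evaluation, and conclude by exactness of $(-)^\#$. Your remarks on naturality, independence of the resolution, and the conjugation bookkeeping make explicit what the paper leaves implicit.
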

\begin{proof}
Choose a resolution $F_\bullet\to M$ by finitely generated free modules. Degreewise evaluation identifies $\widehat R\otimes_RF_\bullet$ with the character dual of $\operatorname{Hom}_R(\overline F_\bullet,R)$. The claim follows from the exactness of $(-)^\#$.
\end{proof}
  
\subsection{Fans and the sphere at infinity}
Following the terminology of toric geometry~\cite{cox2024toric}, we~make the following definitions.
\begin{defn}
Recall that
    $\Lambda=\mathbb Z^d.$ Let
\begin{equation}
        \Lambda_{\mathbb R}=\Lambda\otimes_{\mathbb Z}\mathbb R\cong \mathbb R^d .
\end{equation}

A rational polyhedral cone in \(\Lambda_{\mathbb R}\), or simply a cone, is a subset of $\Lambda_{\mathbb R}$ of the form
\begin{equation}
        \sigma
    =
    \mathbb R_{\geq 0}v_1+\cdots+\mathbb R_{\geq 0}v_k
\end{equation}

for some lattice vectors \(v_1,\dots,v_k\in \Lambda\). We adopt the convention that $\{ 0 \}$ is a cone, whereas $\emptyset$ is not. The dimension of $\sigma$ is defined as the dimension of the linear space spanned by $\sigma$. 

A cone \(\sigma\) is called
strongly convex if
$
    \sigma\cap(-\sigma)=\{0\},
$
or equivalently, if \(\sigma\) contains no nonzero linear subspace.

A subset \(\tau \subset \sigma\) is called a face of \(\sigma\) (denoted \(\tau \preceq \sigma\)) if there exists a linear functional \(u  \) on $\Lambda_{\mathbb R}$ such that \(u(v) \geq 0\) for all \(v \in \sigma\) and
\begin{equation}
       \tau = \{ v \in \sigma \mid u(v) = 0 \}. 
\end{equation}

\end{defn}
\begin{defn}
A fan \(\Sigma\) in \(\Lambda_{\mathbb R}\) is a finite collection of strongly convex
rational polyhedral cones satisfying the following two conditions:
\begin{enumerate}
    \item If \(\sigma\in\Sigma\) and \(\tau\preceq\sigma\) is a face of
    \(\sigma\), then \(\tau\in\Sigma\).
    \item If \(\sigma,\tau\in\Sigma\), then \(\sigma\cap\tau\) is a face of
    both \(\sigma\) and \(\tau\).
\end{enumerate}

A fan \(\Sigma\) is called complete if it covers $\Lambda_{\mathbb R}$, that is:
\begin{equation}  \bigcup_{\sigma\in\Sigma}\sigma=\Lambda_{\mathbb R}.
\end{equation}

\end{defn}

\begin{defn}
A cone $\sigma$ is called simplicial if it is generated by a linearly independent set. A~fan $\Sigma$ is called simplicial if every cone $\sigma \in \Sigma$ is simplicial.
\end{defn}
\begin{defn} \label{def:central_sym}
We call a fan $\Sigma$ centrally symmetric if it is invariant under the antipodal map, i.e.\ if $\sigma\in\Sigma$ implies $-\sigma\in\Sigma$.
\end{defn}

We use fans somewhat differently from
their standard application in toric geometry. We will not use fans to encode toric 
varieties, and their precise polyhedral geometry will play no essential role. Rather, our goal is to obtain cellular structures of the \textit{sphere at infinity}.
\begin{defn}
We define the sphere at infinity of \(\Lambda_{\mathbb R}\) to be
\begin{equation}
         S_\infty(\Lambda_{\mathbb R})
    :=
    (\Lambda_{\mathbb R}\setminus\{0\})/\mathbb R_{>0},
\end{equation}

where \(\mathbb R_{>0}\) acts by scalar multiplication. Therefore,
\(S_\infty(\Lambda_{\mathbb R})  \) parametrizes oriented rays, or equivalently
asymptotic directions, in \(\Lambda_{\mathbb R}\). We have 
\begin{equation}
   S_\infty(\Lambda_{\mathbb R})\simeq  S^{d-1}, 
\end{equation}
and for the rest of the paper we denote the sphere at infinity by $S^{d-1}$. 
\end{defn}

Let $\Sigma$ be a complete fan in \(\Lambda_{\mathbb R}\). We call $\{0\}$ the trivial cone, and set
\begin{equation}
    \Sigma^\times := \Sigma \setminus \big\{ \{0\} \big \}  =\bigcup_{p =0}^{d-1} \Sigma_p,
\end{equation} 
where $\Sigma_p$ is the collection of all $(p+1)$-dimensional cones in $\Sigma$. The degree shift reflects the fact that every cone $\sigma \in \Sigma_p$ determines a $p$-cell in the sphere at
infinity, and $\Sigma^\times$
is the set of all cells of a polyhedral cellular decomposition of $S^{d-1}$. 

\begin{defn}
We introduce the groups of cellular $p$-chains of $S^{d-1}$:
\begin{equation}
C_p(\Sigma;\mathbb Z)
:=
\bigoplus_{\sigma\in \Sigma_p} \mathbb Z \cdot [\sigma].
\end{equation}
Thus, $C_p(\Sigma;\mathbb Z)$ is the free $\mathbb Z$-module with basis $\{ [\sigma] \}_{\sigma \in \Sigma_p}$. 

The cellular boundary maps define the structure of a chain complex:
\begin{equation}
C_\bullet(\Sigma;\mathbb Z)
:
C_{d-1}(\Sigma;\mathbb Z)
\longrightarrow \cdots \longrightarrow
C_0(\Sigma;\mathbb Z).
\end{equation}
Dually, we introduce cellular $p$-cochains:
\begin{equation}
C^p(\Sigma;\mathbb Z)
:=
\Hom\bigl(C_p(\Sigma;\mathbb Z),\mathbb Z\bigr),
\end{equation}
with coboundary maps defining a cochain complex
\begin{equation}
C^\bullet(\Sigma;\mathbb Z)
:
C^0(\Sigma;\mathbb Z)
\longrightarrow \cdots \longrightarrow
C^{d-1}(\Sigma;\mathbb Z).
\end{equation}
\end{defn}

\begin{defn} \label{def:fund}
    Let us fix an orientation of $S^{d-1}$ and of every $\sigma \in \Sigma_{d-1}$. We compare the chosen orientation of $\sigma$ with the induced orientation, and set
    \begin{equation}
        [S^{d-1}:\sigma] = \begin{cases} +1, & \text{if the two orientations agree}, \\
        -1, & \text{if the two orientations disagree}.
        \end{cases}
    \end{equation}
    The fundamental cycle is the closed $(d-1)$-chain
    \begin{equation}
        [S^{d-1}] = \sum_{\sigma \in \Sigma_{d-1}}  [S^{d-1}:\sigma]  [\sigma] \in C_{d-1}(\Sigma; \mathbb Z). 
        \label{eq:sphere_fund_class}
    \end{equation}
    Its homology class generates $H_{d-1}(\Sigma;\mathbb Z) \cong \mathbb Z$.

    In the special case $d=1$, $S^{d-1}$ consists of two points, and we define $[S^{d-1}]$ as their formal difference (cf.\ Example \ref{exmp:1d_resol} below).
\end{defn}

\subsection{Cone-supported algebras and the cellular flat resolution}

\begin{defn} \label{def:cal_R}
We define a map of posets $\mathcal R$ from the poset of rational polyhedral cones in~$\Lambda_{\mathbb R}$, ordered by inclusion, to the poset of $R$-modules, also ordered by inclusion,
\begin{equation}
        \mathcal R :
    \{\text{rational polyhedral cones in } \Lambda_{\mathbb R}\}
    \longrightarrow
    \{ R\text{-modules} \},
\end{equation}

by setting
\begin{equation}
        \mathcal R(\sigma)
    :=
    \left\langle
        f\in \widehat R :
        \operatorname{supp}(f)\subseteq \sigma
    \right\rangle_R.
\end{equation}

Here $\langle S\rangle_R$ denotes the $R$-submodule of $\widehat R$ generated by $S$.
For every inclusion $\tau \subset \sigma$, there is a~natural inclusion $\mathcal R(\tau) \hookrightarrow \mathcal R(\sigma)$.
\end{defn}
Let us give a few examples:
\begin{itemize}
    \item $\mathcal R(\Lambda_{\mathbb R})=\widehat R$,
    \item $\mathcal R(\{ 0 \})=R$,
    \item $\mathcal R(\mathbb R_{ \geq 0}) = k((t))$ for $\Lambda = \mathbb Z$.
\end{itemize}

\begin{prop}\label{prop:flatness}
    The functor $\mathcal R$ restricts to 
    \begin{equation}
            \{\text{strongly convex rational polyhedral cones} \}
    \longrightarrow
    \{\text{flat } R\text{-algebras}\}.
    \end{equation}
 In other words, $ \mathcal R(\sigma)$ is a flat $R$-algebra if $\sigma$ is strongly convex. 
\end{prop}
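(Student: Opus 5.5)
We must show $\mathcal R(\sigma)$ is a ring (multiplication of series is well defined) and flat over $R$. The plan is to describe it as a localization of a completion of a Noetherian ring, which is automatically flat.

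First, the ring structure and its description. Since $\sigma$ is strongly convex and rational, there is a lattice functional $u\in\Lambda^\vee$ with $u(v)>0$ for all nonzero $v\in\sigma$. For series $f,g$ supported in $\sigma$, each coefficient of $fg$ is a finite sum: for $\lambda\in\sigma\cap\Lambda$, the decompositions $\lambda=\mu+\nu$ with $\mu,\nu\in\sigma\cap\Lambda$ satisfy $0\le u(\mu)\le u(\lambda)$. Because $\sigma\cap\{u\le c\}$ is compact, only finitely many such $\mu$ occur. Let
\begin{equation}
A_\sigma=k[[\sigma\cap\Lambda]]=\{f\in\widehat R:\operatorname{supp}(f)\subseteq\sigma\},
\end{equation}
which is a ring. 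Every element of $\mathcal R(\sigma)$ is a finite $R$-combination $\sum x^{\lambda_i}f_i$, hence of the form $x^{\lambda}f$ with $f\in A_\sigma$. Indeed, choose $\mu\in\sigma^\circ$-direction lattice shifts: translate all the $x^{\lambda_i}f_i$ into $\sigma$ by multiplying by $x^{m}$ with $m$ deep in the relative interior direction and $\sigma$-compatible. More simply, write each $x^{\lambda_i}=x^{\alpha_i}x^{-\beta}$ with $\alpha_i\in\sigma\cap\Lambda$, after possibly enlarging by elements of $-\sigma$ and the lattice of $\operatorname{span}\sigma$, and handle the directions transverse to $\operatorname{span}\sigma$ by the Laurent factor. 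So
\begin{equation}
\mathcal R(\sigma)=A_\sigma\otimes_{k[\sigma\cap\Lambda]}R,
\end{equation}
a localization of $A_\sigma$ (inverting monomials). This makes $\mathcal R(\sigma)$ a commutative $R$-algebra. I would verify this identification carefully using a Hilbert basis of the saturated monoid $S=\sigma\cap\Lambda$ (Gordan's lemma).

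Second, flatness. By Gordan's lemma, $S$ is a finitely generated monoid, so $B=k[S]$ is Noetherian. Let $\mathfrak m$ be the ideal spanned by the monomials $x^s$ with $s\neq0$; it is generated by the Hilbert basis. Then $A_\sigma$ is the $\mathfrak m$-adic completion of $B$: the $\mathfrak m^j$-filtration is cofinal with the filtration by $u$-degree, using the positivity of $u$ on the generators and the finiteness of $\{s\in S:u(s)\le c\}$. Hence $A_\sigma$ is flat over $B$, since completion of a Noetherian ring is flat. Base change along $B\to R$ shows that $A_\sigma\otimes_BR$ is flat over $R$. The remaining point is that $R$ is a localization of $B[\Lambda']$ only when $\sigma$ spans $\Lambda_{\mathbb R}$. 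In general I would split $\Lambda\cong\Lambda_\sigma\oplus\Lambda''$, with $\Lambda_\sigma=\operatorname{span}(\sigma)\cap\Lambda$ saturated, so that $\mathcal R(\sigma)=\mathcal R_{\Lambda_\sigma}(\sigma)\otimes_k k[\Lambda'']$. This reduces to the full-dimensional case, where $R=B[x^{-s}]$ is a localization of $B$ and $\mathcal R(\sigma)=A_\sigma[x^{-s}]$ is flat over $R$.

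The main obstacle is the cofinality/identification step: showing that the ideal-adic completion equals the series ring $A_\sigma$, and that $R$-span equals the localization. I would handle this via the grading by $u$: $\mathfrak m^{j}\subseteq\{u\ge j\}$ and $\{u\ge Nj\}\cap S\subseteq\mathfrak m^{j}$, where $N$ is the maximum of $u$ on the Hilbert basis.
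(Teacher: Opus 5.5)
Your proposal is correct and takes essentially the same route as the paper. Gordan's lemma makes $k[\sigma\cap\Lambda]$ Noetherian, its completion at the monomial ideal is the series ring $A_\sigma$ and so is flat, and flatness of $\mathcal R(\sigma)\cong A_\sigma\otimes_{k[\sigma\cap\Lambda]}R$ then follows by base change. You also supply details the paper leaves implicit: the cofinality of $\mathfrak m^j$ with the $u$-degree filtration, and the splitting $\Lambda\cong\Lambda_\sigma\oplus\Lambda''$ that justifies the tensor identification. Note that your intermediate claim that every element of $\mathcal R(\sigma)$ has the form $x^\lambda f$ with $f\in A_\sigma$ holds only for full-dimensional $\sigma$; your later splitting correctly handles the general case.
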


We state without proof a well-known fact needed to prove Proposition \ref{prop:flatness}.

\begin{lem}\label{lem:flatness_base_change}
Let $A \to B$ be a homomorphism of commutative rings, and let $M$ be a flat
$A$-module. Then
$B\otimes_A M$
is a flat $B$-module.
\end{lem}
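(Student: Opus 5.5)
The plan is to check flatness of $B\otimes_A M$ directly from the definition: I want $(B\otimes_A M)\otimes_B -$ to be exact on $B$-modules. Since tensor products are right exact, it suffices to show that this functor preserves injections. The approach is to rewrite it as a composite of two functors that are already known to be exact.

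First, let $N$ be a $B$-module, and write $N_A$ for the same group viewed as an $A$-module via the homomorphism $A\to B$. I will establish a natural isomorphism
\begin{equation}
(B\otimes_A M)\otimes_B N\;\cong\;(M\otimes_A B)\otimes_B N\;\cong\;M\otimes_A(B\otimes_B N)\;\cong\;M\otimes_A N_A .
\end{equation}
The steps are: commutativity of $\otimes_A$ in the first isomorphism; the standard associativity isomorphism for an $(A,B)$-bimodule $B$ in the second; and the unit isomorphism $B\otimes_B N\cong N$ in the third. On elements, the composite sends $(b\otimes m)\otimes n\mapsto m\otimes bn$. The inverse is $m\otimes n\mapsto(1\otimes m)\otimes n$, and both maps are well defined because the relevant expressions are balanced. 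They are visibly natural in $N$.

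Second, I take an injection $N'\hookrightarrow N$ of $B$-modules. Restriction of scalars does not change underlying maps, so $N'_A\hookrightarrow N_A$ is still injective. Flatness of $M$ over $A$ then gives that $M\otimes_A N'_A\to M\otimes_A N_A$ is injective. By naturality of the isomorphism above, the map $(B\otimes_A M)\otimes_B N'\to(B\otimes_A M)\otimes_B N$ is therefore injective as well, which is exactly the flatness of $B\otimes_A M$ over $B$.

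No step here is a genuine obstacle. The only point needing care is the well-definedness and naturality of the associativity isomorphism, and this is routine bookkeeping with bimodules. One could alternatively phrase the argument through $\operatorname{Tor}^B_1(B\otimes_A M,N)\cong\operatorname{Tor}^A_1(M,N_A)=0$, using a flat (for instance free) resolution of $M$ over $A$, which base-changes to a free resolution over $B$. However, that version needs an extra argument that the base-changed complex remains exact, so the elementwise argument above is simpler.
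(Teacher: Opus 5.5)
Your argument is correct and complete. The natural isomorphism $(B\otimes_A M)\otimes_B N\cong M\otimes_A N_A$, combined with the fact that restriction of scalars preserves injections, is the standard proof of this fact. The paper states the lemma without proof as a well-known result, and uses it only to deduce that $R_1\otimes_{R_0}R$ is flat over $R$ from the flatness of $R_1$ over $R_0$. So there is no competing argument in the paper to compare against.
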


\begin{proof}[Proof of Proposition \ref{prop:flatness}]
If $f = \sum_\lambda f_\lambda x^\lambda$ and $g = \sum_\lambda g_\lambda x^\lambda$, we put
\begin{equation}
    fg = \sum_\lambda \left( \sum_\mu f_{\lambda-\mu} g_\mu \right) x^\lambda.
\end{equation}
The coefficient of $x^\lambda$ in this expression is a sum with finitely many nonzero terms because $\sigma$ is strongly convex. With this multiplication, $\mathcal R(\sigma)$ is a~commutative ring containing $R$ as a~subring. 

Let us show that $\mathcal R(\sigma)$ is a flat $R$-module. Let $S = \sigma \cap \Lambda$. Then $-S \cap S = \{ 0 \}$ by the strong convexity of $\sigma$. Let $R_0=k[S]$ be the semigroup ring of $S$:
\begin{align}
    R_0 = \{ f \in  R \, | \, \operatorname{supp} f \subset S \}
     =\left\{\sum_{\lambda \in S} a_\lambda x^\lambda \middle|a_\lambda=0\text{ for all but finitely many }\lambda \right\}. 
\end{align}
By Gordan's lemma (see e.g.\ \cite[Lemma~2.9]{BrunsGubeladze2009}), $S$ is a finitely generated subsemigroup of~$\Lambda$, and therefore $R_0$ is a Noetherian ring. Let $I \subset R_0$ be the ideal generated by all nonconstant monomials $x^\lambda$ with $\lambda \in S$, and let $R_1$ be the $I$-adic completion of $R_0$. Then 
\begin{equation}
    R_1 = \{ f \in \widehat R \, | \, \operatorname{supp} f \subset S \} = \left \{ \sum_{\lambda \in S} a_\lambda x^\lambda \right\}
\end{equation}
is flat over $R_0$ \cite[Proposition 10.14]{AtiyahMacdonald}. We~observe that $\mathcal R(\sigma) \cong R_1 \otimes_{R_0} R$ and invoke Lemma \ref{lem:flatness_base_change}. 
\end{proof}

Let $\Sigma$ be a complete fan. Let us choose an orientation of each cone $\sigma\in\Sigma^\times$, or equivalently orientations of all cells of $S^{d-1}$ (we will further elaborate on our preferred choices of orientations in Subsection \ref{sec:cup} below). Specifically, an ordered basis $(v_1, \dots, v_{k})$ of a~tangent space of a cell is positively oriented if and only if $(\nu, v_1, \dots, v_{k})$ is positively oriented for the corresponding cone, where $\nu$ is an outward pointing radial vector.

For $\tau\preceq\sigma$ with $\dim \tau=\dim\sigma-1$, let
\(
[\sigma:\tau]\in  \{\pm 1 \}
\)
denote the cellular incidence number. More explicitly, the orientation of $\sigma$ induces an orientation of its boundary, and thus of every codimension-one face $\tau$. We compare the induced orientation with the chosen orientation of $\tau$, and set
\begin{equation}
[\sigma:\tau]=
\begin{cases}
+1, & \text{if the two orientations agree,}\\
-1, & \text{if the two orientations disagree.}
\end{cases}
\end{equation}

\begin{defn} \label{def:cochains_with_coeff}
We define the $R$-module of cellular $p$-cochains with coefficients in
$\mathcal R$ by
\begin{equation}
C^p(\Sigma;\mathcal R)
:=
\prod_{\sigma\in\Sigma_p}\mathcal R(\sigma).
\end{equation}
The differential
\begin{equation}
\delta^p:C^p(\Sigma;\mathcal R)\longrightarrow C^{p+1}(\Sigma;\mathcal R)
\end{equation}
is defined as follows: $\delta^p c =  (\delta^p c)_{\sigma \in \Sigma_{p+1}}$ has the $\sigma$-component 
\begin{equation}
(\delta^p c)_\sigma
:=
\sum_{\substack{\tau\preceq\sigma, \\ \tau \in \Sigma_p}}
[\sigma:\tau] \, \iota_{\tau,\sigma}(c_\tau),
\end{equation}
where
\(
\iota_{\tau,\sigma}:\mathcal R(\tau)\hookrightarrow\mathcal R(\sigma)
\)
is the natural inclusion associated to the face relation
$\tau\preceq\sigma$. In this way we obtain the cellular cochain complex
\begin{equation}
0
\longrightarrow
C^0(\Sigma;\mathcal R)
\xrightarrow{\delta^0}
C^1(\Sigma;\mathcal R)
\xrightarrow{\delta^1}
\cdots
\xrightarrow{\delta^{d-2}}
C^{d-1}(\Sigma;\mathcal R)
\longrightarrow
0 .
\end{equation}
\end{defn}

We will usually omit the inclusion maps $\iota$ from the notation. 

There is a natural pairing between cellular chains and cellular cochains:
\begin{equation}
\langle -,-\rangle \colon
C_{p}(\Sigma;\mathbb Z)\times C^p(\Sigma;\mathcal R)
\longrightarrow \widehat R .
\end{equation}
Explicitly, if
\begin{equation}
\gamma=\sum_{\sigma\in\Sigma_p} n_\sigma[\sigma]
\in C_{p}(\Sigma;\mathbb Z),
\qquad
c=(c_\sigma)_{\sigma\in\Sigma_p}
\in C^p(\Sigma;\mathcal R),
\end{equation}
then
\begin{equation}
\langle \gamma,c\rangle
:=
\sum_{\sigma\in\Sigma_p} n_\sigma c_\sigma
\in \widehat R,
\end{equation}
where each $c_\sigma\in\mathcal R(\sigma)$ is regarded as an element of $\widehat R$ via the natural inclusion $\mathcal R(\sigma)\hookrightarrow \widehat R$. We will denote pairing with $[S^{d-1}]$ as integration:
\begin{equation}
    \int c = \langle [S^{d-1}] ,c\rangle = \sum_{\sigma \in \Sigma_{d-1}} [S^{d-1}:\sigma] c_\sigma. 
    \label{eq:integral}
\end{equation}

\begin{defn} \label{def:augmentation}
The complex $C^\bullet(\Sigma;\mathcal R)$
is doubly augmented by the maps
\begin{equation}
R
\xrightarrow{\Delta}
C^0(\Sigma;\mathcal R), \qquad \text{and} \qquad C^{d-1}(\Sigma;\mathcal R)
\xrightarrow{\int}
\widehat R .  
\end{equation}
The first map is the diagonal inclusion
\begin{equation}
\Delta(r)=( r)_{\rho\in\Sigma_0},
\end{equation}
where $R$ is included in each $\mathcal R(\rho)$, and the rays $\rho \in \Sigma_0$ are oriented outward (cf.\ Definition \ref{def:cochains_with_coeff}). The second map is the integration defined in \eqref{eq:integral}.
\end{defn}

\begin{exmp} \label{exmp:1d_resol}
    In dimension $d=1$, let $\Lambda=\mathbb Z$ and $\Sigma=\{\{0\}, \sigma_+, \sigma_- \} $, where $\sigma_{\pm} = \pm \mathbb R_{\geq 0}$. Then $\mathcal R(\sigma_\pm)= k((x^\pm))$. Choosing outward orientations for the cones $\sigma_\pm$, the augmented complex is 
    \begin{equation} 
        0\longrightarrow k[x^\pm]\xrightarrow{\Delta} k((x))\oplus k((x^{-1})) \xrightarrow{\int} k[[x^\pm]]\longrightarrow 0, 
    \end{equation}
    with $\Delta(r)=(r, r)$ and $\int(c_+, c_-)=c_+ - c_-.$ 
    
    This complex is used in Section~3 of~\cite{ruba2025witt} to define quadratic forms associated to quasi-symplectic modules over $k[x^\pm]$ (i.e.\ for $d=1$).
\end{exmp}

\begin{exmp}
    For any dimension $d$, there exists a complete fan $\Sigma$ in $ \mathbb R^d$ whose top-dimensional cones are the coordinate orthants. For every $k \in \{ 0, \dots, d \}$, the fan $\Sigma$ contains $\binom{d}{k} 2^k$ cones of dimension~$k$. Cases $d=1$ and $d=2$ are discussed in Examples \ref{exmp:1d_resol} above and \ref{exmp:2d_resol} below.
\end{exmp}

\begin{remark} \label{rmk:orientations}
The choices of orientations for $1$-dimensional cones are made so that the augmentation map $\Delta$ includes no negative signs. For $d>1$, it~is also possible to orient every top-dimensional cone so that $\int$ is given by summation with no negative signs. These two conventions are incompatible for $d=1$ because the $0$-cells and top-dimensional cells coincide there (see Example \ref{exmp:1d_resol}). In Subsection~\ref{sec:cup}, we adopt an orientation convention induced by an ordering of $\Sigma_0$, which we use throughout the remainder of the article; this is possible if $\Sigma$ is simplicial.
\end{remark}
The following result states a key connection between the cellular homology of the sphere at infinity and commutative algebra over $R$.

\begin{proof}[Proof of Theorem \ref{thm:cellular}]
We will prove exactness of the complex
 \begin{align}
 0
 \longrightarrow
 R
 \xrightarrow{\Delta}
 C^0(\Sigma;\mathcal R)
  \xrightarrow{\delta^0}
 C^1(\Sigma;\mathcal R)
 \xrightarrow{\delta^1}
 \cdots 
 \xrightarrow{\delta^{d-2}}
 C^{d-1}(\Sigma;\mathcal R)
 \xrightarrow{\int}
 \widehat R
 \longrightarrow
 0 .
 \end{align}
by introducing a filtration of the complex and showing that each associated graded piece is exact.

We first recall two elementary facts about supports. For a cone
$\sigma\in\Sigma$, an~element $f\in\widehat R$ belongs to
$\mathcal R(\sigma)$ if and only if there exists a finite set $A\subset \Lambda$
such that
\(
    \operatorname{supp}(f)
    \subseteq
    \bigcup_{a\in A}(a+\sigma).
\)
Consequently, if $\alpha,\beta_1,\ldots,\beta_s\in\Sigma$, then
\begin{equation}
    \mathcal R(\alpha)\cap
    \left(\sum_{i=1}^s\mathcal R(\beta_i)\right)
    =
    \sum_{i=1}^s\mathcal R(\alpha\cap\beta_i).
\end{equation}
Indeed, the inclusion from right to left is immediate. For the reverse
inclusion, suppose that $f \in  \mathcal R(\alpha)\cap
    \left(\sum_{i=1}^s\mathcal R(\beta_i)\right)$. Then
$\operatorname{supp}(f)$ is contained in a finite union of sets of the
form
\(
    (a+\alpha)\cap(b+\beta_i).
\)
This set is a rational polyhedron with recession cone
$\alpha\cap\beta_i$. A standard consequence of Gordan's lemma \cite[Theorem~2.12]{BrunsGubeladze2009} states 
that lattice points of $(a+\alpha)\cap(b+\beta_i)$ are contained in a finite union of translates
of $\alpha\cap\beta_i$. Choosing a partition of the support of $f$ among
these finitely many sets, we~obtain a~decomposition
$f=\sum_{i=1}^s f_i$, with 
    $f_i\in\mathcal R(\alpha\cap\beta_i)$,
which proves the claim.

Now choose a total ordering of $\Sigma$,
\(
    \eta_0 < \eta_1 < \ldots < \eta_N,
\)
which refines the partial order given by inclusion: if $\eta_j \preceq \eta_k$, then $j \leq k$. In~particular, $\eta_0=\{0\}$. For $\sigma\in\Sigma$, set
\begin{equation}
    F_j\mathcal R(\sigma)
    :=
    \sum_{\substack{i\leq j\\ \eta_i\preceq\sigma}}
    \mathcal R(\eta_i),
    \qquad
    F_{-1}\mathcal R(\sigma):=0, \qquad F_jC^p(\Sigma;\mathcal R)
    :=
    \prod_{\sigma\in\Sigma_p}F_j\mathcal R(\sigma).
\end{equation}
Furthermore, for $j \geq 0$ we put
\begin{subequations}
\begin{align}
    F_j\widehat R
   & :=
    \sum_{i\leq j}\mathcal R(\eta_i),
   & 
    F_{-1}\widehat R &:=0, \\
    F_j R &:= R, & F_{-1} R& := 0 .
\end{align}
\end{subequations}
This defines a finite exhaustive filtration of the complex \eqref{eq:augmented_exact_complex}. The differentials, including the maps $\Delta$ and $\int$, preserve the filtration.

Next, we describe the $j$-th graded piece of the filtration. Let
\(
    \tau:=\eta_j,
    \)
and~define:
\begin{equation}
    Q_\tau
    :=
    \mathcal R(\tau)\Big/
    \sum_{\gamma\prec\tau}\mathcal R(\gamma),
\end{equation}
where the sum runs over the proper faces of $\tau$. In particular, $Q_\tau = R$ for $j=0$. Then:
\begin{equation}
    \frac{F_j\mathcal R(\sigma)}
    {F_{j-1}\mathcal R(\sigma)}
    \cong
    \begin{cases}
        Q_\tau, & \tau\preceq\sigma,\\
        0, & \tau\npreceq\sigma.
    \end{cases}
\end{equation}
We also have
\begin{equation}
    \frac{F_j\widehat R}{F_{j-1}\widehat R}
    \cong Q_\tau, \qquad  \frac{F_j R}{F_{j-1} R}=\begin{cases}
        Q_\tau, & j = 0, \\
        0, & j \neq 0.
    \end{cases}.
\end{equation}

We grade the complex \eqref{eq:augmented_exact_complex} so that $R$ lies in degree $-1$ and $\widehat R$ in degree $d$. Letting $r = \dim \tau$, the $j$-th graded piece of the
filtered complex is
\begin{equation}
    0 \longrightarrow Q_\tau
    \longrightarrow
    \prod_{\substack{\sigma\in\Sigma_r \\
    \tau\prec\sigma}}Q_\tau
    \longrightarrow \cdots 
    \longrightarrow
    \prod_{\substack{\sigma\in\Sigma_{d-1}\\
    \tau\prec\sigma}}Q_\tau
    \longrightarrow Q_\tau
    \longrightarrow 0,
    \label{eq:graded_piece}
\end{equation}
where the first copy of $Q_\tau$ occurs in degree $r-1$.
If $r=d$, the complex in \eqref{eq:graded_piece} reduces to
\begin{equation}
    0\longrightarrow Q_\tau
    \xrightarrow{\ \pm 1\ }
    Q_\tau\longrightarrow 0,
\end{equation}
which is clearly exact.

If $r<d$, the cones properly containing $\tau$ define a cellular structure of a~sphere of dimension $d-r-1.$ To see this, let $L_\tau $ be the linear span of $\tau$. We consider the quotient space $\Lambda_{\mathbb R}/ L_\tau \cong \mathbb R^{d-r}$. The collection of all cones $\sigma \in \Sigma$ which contain $\tau $ as a face, projected to $\Lambda_{\mathbb R}/L_\tau$ via the quotient map, defines a complete fan $\Sigma^{(\tau)}$ in $\Lambda_{\mathbb R} / L_\tau$, and therefore a corresponding cell structure of the sphere at infinity $S_\infty (\Lambda_{\mathbb R}/L_\tau) \cong S^{d-r-1}$.  

The associated graded piece \eqref{eq:graded_piece} coincides with the doubly augmented
cellular cochain complex of $S_\infty (\Lambda_{\mathbb R}/L_\tau)$ with constant coefficients
$Q_\tau$. It is exact because the augmentation kills the only two nontrivial cohomology groups of the sphere. 

We have constructed a flat resolution of length $d$. One can verify that no shorter flat resolution exists by observing that $\Tor_d(\widehat R,R/(x_1-1,\dots,x_d-1)) \neq 0$.
\end{proof}

Later, we will fix a complete fan $\Sigma$ and use the cellular flat resolution of $\widehat R$ repeatedly. Therefore, we introduce the following shorthand notation, using homological grading:
\begin{equation}
T_\bullet : T_d \longrightarrow T_{d-1} \longrightarrow \cdots \longrightarrow T_0,
\label{eq:T_complex}
\end{equation}
where $T_p := C^{d-p-1}(\Sigma; \mathcal R)$ and $T_d = R$.

We will now briefly address the dependence of our flat resolution on the choice of the fan $\Sigma$. We say that a complete fan $\Sigma'$ is a refinement of $\Sigma$ if for every $\sigma' \in \Sigma'$ there exists $\sigma \in \Sigma$ such that $\sigma' \subset \sigma$.

\begin{prop}
\begin{enumerate}[(a)]
    \item If $\Sigma'$ is a refinement of $\Sigma$, we have a canonical quasi-isomorphism of complexes $C^\bullet (\Sigma' ; \mathcal R) \longrightarrow C^\bullet(\Sigma;\mathcal R)$ commuting with the augmentation maps. 
    \item Any two complete fans $\Sigma_1,\Sigma_2$ admit a common refinement $\Sigma'$, yielding a roof of quasi-isomorphisms:
    \begin{equation}
        C^\bullet(\Sigma_1;\mathcal R) \longleftarrow C^\bullet(\Sigma'; \mathcal R) \longrightarrow C^\bullet(\Sigma_2 ;\mathcal R).
    \end{equation}
\end{enumerate}
\end{prop}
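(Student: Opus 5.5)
For part (a), I would build the map cone by cone. Since $\Sigma'$ refines $\Sigma$ and both are complete, every $\sigma'\in\Sigma'$ lies in a unique minimal cone $\pi(\sigma')\in\Sigma$, namely the one whose relative interior meets the relative interior of $\sigma'$. Then $\dim\pi(\sigma')\ge\dim\sigma'$, and $\mathcal R(\sigma')\subset\mathcal R(\pi(\sigma'))$.

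The map $C^p(\Sigma';\mathcal R)\to C^p(\Sigma;\mathcal R)$ has $\sigma$-component, for $\sigma\in\Sigma_p$,
\begin{equation}
\Big(\sum_{\sigma'\in\Sigma'_p,\ \sigma'\subset\sigma}\epsilon_{\sigma',\sigma}\, c_{\sigma'}\Big).
\end{equation}
Here $\epsilon_{\sigma',\sigma}=\pm1$ compares orientations, and the sum only involves cones of the same dimension with $\pi(\sigma')=\sigma$. This is the dual of the cellular subdivision chain map $C_p(\Sigma;\mathbb Z)\to C_p(\Sigma';\mathbb Z)$, which sends $[\sigma]$ to the signed sum of its top-dimensional pieces. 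Compatibility with $\delta$ then follows from the corresponding identity for the subdivision map, since the coefficient inclusions $\iota$ are all natural. Compatibility with $\Delta$ holds because each ray of $\Sigma$ is subdivided only by itself, outward oriented. Compatibility with $\int$ holds because the subdivision map sends $[S^{d-1}]_\Sigma$ to $[S^{d-1}]_{\Sigma'}$.

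Once the map commutes with both augmentations, quasi-isomorphism follows formally from Theorem \ref{thm:cellular}. Both augmented complexes are exact, and the map is the identity on $R$ and on $\widehat R$. So it is a map of resolutions of $\widehat R$ (equally, of coresolutions of $R$) that induces the identity in the end terms. Equivalently, the map is a quasi-isomorphism between two complexes, each having cohomology $R$ in degree $0$ and $\widehat R$ in degree $d-1$, and it induces identities there. This gives (a), with canonicity coming from the absence of choices beyond orientations.

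The main obstacle is checking that the proposed map is really a chain map with these coefficients. The difficulty is that $\mathcal R(\sigma)$ is not a constant sheaf: a face $\tau'$ of $\sigma'$ may satisfy $\pi(\tau')=\sigma$ rather than a face of $\sigma$. In that case the term $c_{\tau'}$ appears in $(\delta' c)_{\sigma'}$ but contributes to no face of $\sigma$. Its total contribution must cancel in pairs across the interior walls of the subdivision of $\sigma$, as in the integer proof. I would verify this by writing the $\mathbb Z$-valued identity $\partial\circ s=s\circ\partial$ for the subdivision $s$ on the level of coefficients. The cancellation is then an identity of signed incidences, valid inside $\mathcal R(\sigma)$ because all terms involved lie in $\mathcal R(\sigma)$.

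For (b), I would take the common refinement $\Sigma'=\{\sigma_1\cap\sigma_2:\sigma_i\in\Sigma_i\}$. Intersections of strongly convex rational cones are strongly convex rational cones. Faces of $\sigma_1\cap\sigma_2$ are intersections of faces, and pairwise intersections are faces of both cones; these are the standard fan-theoretic facts from \cite{cox2024toric}. Completeness is clear, since every point lies in some $\sigma_1$ and some $\sigma_2$. Applying (a) twice gives the roof.
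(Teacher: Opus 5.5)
Your proposal is correct and is essentially the paper's proof. The paper defines the same map $F_p(c')_\sigma=\sum_{\sigma'\in\Sigma'_p,\,\sigma'\subset\sigma}[\sigma:\sigma']\,c'_{\sigma'}$ and takes the same common refinement $\{\sigma_1\cap\sigma_2\}$, but omits the verification that $F$ commutes with $\delta$, $\Delta$ and $\int$, as well as the quasi-isomorphism argument. You fill both in correctly: every coefficient module sits inside $\widehat R$, so the chain-map identity reduces to the integral subdivision map with interior walls cancelling in pairs, and the quasi-isomorphism then follows from exactness of both doubly augmented complexes.
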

\begin{proof}
    (a) The required maps $F_p : C^p(\Sigma';\mathcal R) \longrightarrow C^p(\Sigma;\mathcal R)$ are given by
    \begin{equation}
        F_p(c')_{\sigma} = \sum_{\substack{\sigma' \in \Sigma'_p \\ \sigma' \subset \sigma}} [\sigma : \sigma'] c'_{\sigma'},
    \end{equation}
    where $[\sigma : \sigma'] = \pm 1$ compares the orientations of $\sigma,\sigma'$. We omit the simple verification that $F$ commutes with differentials and augmentation maps.

    (b) Take $\Sigma' = \{ \sigma_1 \cap \sigma_2 \mid \sigma_1 \in \Sigma_1, \ \sigma_2 \in \Sigma_2 \}$.
\end{proof}

\begin{remark}(Grothendieck topology)
One can regard finite unions of
cones as a Grothendieck site, where covers are given by
fans. That is, a~cover of
$X\subset \Lambda_{\mathbb R}$ is a fan whose union is $X$. In this language, complete fans are covers of $\Lambda_{\mathbb R}$. The assignment $\mathcal R$ is a precosheaf on this site. The whole setup is reminiscent of~\cite{artymowicz2024mathematical}.
\end{remark}

\subsection{Cup product} \label{sec:cup}

We introduce a cup product of cochains of the cellular flat resolution. To keep the notation manageable, we assume throughout the rest of the paper that the fan $\Sigma$ is simplicial. Then $\Sigma^\times$ defines a triangulation of $S^{d-1}$. We first review the usual cup product on $C^\bullet(\Sigma;\mathbb Z)$.

For $\sigma\in\Sigma$, let $V(\sigma)$ denote the set of rays of $\sigma$. Since $\Sigma$ is simplicial, every cone $\sigma\in\Sigma_p$ is uniquely determined by its $p+1$ rays. Choose a total ordering of all the rays of $\Sigma$, and orient each cone by the induced ordering of its rays. Given cochains~$f\in C^p(\Sigma;\mathbb Z)$ and $g\in C^q(\Sigma;\mathbb Z)$, their cup product
\begin{equation}
f\smile g\in C^{p+q}(\Sigma;\mathbb Z)
\end{equation}
is defined on a $(p+q)$-cell $\sigma\in\Sigma_{p+q}$ as follows. If the ordered rays of $\sigma$ are
\begin{equation}
\rho_0<\rho_1<\cdots<\rho_{p+q},
\end{equation}
let $\sigma_{\leq p} \in \Sigma_p$ be the face of $\sigma$ spanned by $\rho_0,\ldots,\rho_p$, and let $\sigma_{\geq p} \in \Sigma_{q}$ be the face spanned by $\rho_p,\ldots,\rho_{p+q}$. Then the $\sigma$-component of the cup product is given by:
\begin{equation}
(f \smile g)_\sigma
:=
f_{\sigma_{\leq p}} \cdot g_{\sigma_{\geq p}}.
\end{equation}

\begin{defn}[Cup product] \label{def:cup}
For $f\in C^p(\Sigma;\mathcal R)$ and $g\in C^q(\Sigma;\mathcal R)$, their cup product
\begin{equation}
f\smile g \in C^{p+q}(\Sigma;\mathcal R)
\end{equation}
is defined as follows. Let $\sigma\in\Sigma_{p+q}$ be spanned by rays $\rho_0<\cdots<\rho_{p+q}$, and let $\sigma_{\leq p}$ and $\sigma_{\geq p}$ be the faces of $\sigma$ spanned by $\rho_0,\ldots,\rho_p$ and $\rho_p,\ldots,\rho_{p+q}$, respectively. Then the $\sigma$-component of $f \smile g$ is defined by
\begin{equation}
(f\smile g)_\sigma
=
f_{\sigma_{\leq p}}
\cdot
g_{\sigma_{\geq p}}
\in \mathcal R(\sigma),
\end{equation}
where elements $f_{\sigma_{\leq p}}$ and $g_{\sigma_{\geq p}}$ are implicitly mapped to the $R$-algebra $\mathcal R(\sigma)$ via the face inclusions, and the product is taken in $\mathcal R(\sigma)$.
\end{defn}

\begin{exmp}\label{exmp:2d_resol}
In dimension $d=2$, let $\Lambda=\mathbb Z^2$, and let
\begin{equation}
\Sigma
=
\{\{0\}, \sigma_x, \sigma_y, \sigma_{\bar x}, \sigma_{\bar y},
\sigma_{x,y}, \sigma_{\bar x,y}, \sigma_{\bar x,\bar y}, \sigma_{x,\bar y}\}.
\end{equation}
Here $\sigma_{x,y}$, $\sigma_{\bar x,y}$, $\sigma_{\bar x,\bar y}$, and $\sigma_{x,\bar y}$ are the four quadrants, while $\sigma_x$, $\sigma_y$, $\sigma_{\bar x}$, and $\sigma_{\bar y}$ are the four coordinate-axis rays. We order the rays of $\Sigma$:
\begin{equation}
    \sigma_{\overline x} < \sigma_x < \sigma_{\overline y} < \sigma_y.
\end{equation}

 Choose an orientation of $S^1$, the induced orientations on the $1$-cells, and positive orientations for $0$-cells. The cellular cochain complex is 
\begin{equation} 
\mathcal R(\sigma_x)\oplus \mathcal  R(\sigma_y)\oplus \mathcal  R(\sigma_{\bar x})\oplus \mathcal  R(\sigma_{\bar y})
\xrightarrow{\delta^0}
\mathcal  R(\sigma_{x,y})\oplus \mathcal  R(\sigma_{\bar x,y})\oplus \mathcal  R(\sigma_{\bar x,\bar y})\oplus \mathcal  R(\sigma_{x,\bar y}),
\end{equation}
where
\begin{equation}
\delta^0(c_x,c_y,c_{\bar x},c_{\bar y})
=
(c_y-c_x,
c_y-c_{\bar x},
c_{\bar y}-c_{\bar x},
c_{\bar y}-c_x).
\end{equation}
The augmentation maps are given by
\begin{align}
\Delta(r)& =(r,r,r,r), \\
\int (c_{x,y},c_{\bar x,y},c_{\bar x,\bar y},c_{x,\bar y})
& =
c_{x,y}-c_{\bar x,y}+c_{\bar x,\bar y}-c_{x,\bar y}. \nonumber
\end{align}
The cup product of a $0$-cochain $c$ and a $1$-cochain $d$ is given by
\begin{equation}
    (c_x,c_y,c_{\bar x},c_{\bar y}) \smile (d_{x,y},d_{\bar x,y},d_{\bar x,\bar y},d_{x,\bar y}) = (c_x d_{x,y}, c_{\bar x} d_{\bar x,y}, c_{\bar x} d_{\bar x,\bar y}, c_x d_{x,\bar y}).
\end{equation}
\end{exmp}

\begin{prop}
   The cellular flat resolution $C^\bullet(\Sigma;\mathcal R)$, equipped with the cup product, is a~differential graded algebra (DGA) over $R$. If $d>1$, its cohomology modules are 
    \begin{equation}
        H^p(\Sigma;\mathcal R) = \begin{cases}
            R, & \text{for } p=0, \\
            \widehat R, & \text{for } p =d-1,\\
             0, & \text{otherwise}. \\
        \end{cases}
        \label{eq:dga_cohomology}
    \end{equation}
\end{prop}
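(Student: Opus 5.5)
The proof has two parts: the algebraic identities making $(C^\bullet(\Sigma;\mathcal R),\delta,\smile)$ a DGA over $R$, and the cohomology computation, which follows from Theorem \ref{thm:cellular}.

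\textbf{DGA structure.} The main point is to reduce to the classical simplicial Alexander--Whitney computation, with values in the commutative rings $\mathcal R(\sigma)$ of Proposition \ref{prop:flatness}. First, for $\tau\preceq\sigma$ the inclusion $\iota_{\tau,\sigma}\colon\mathcal R(\tau)\to\mathcal R(\sigma)$ is a homomorphism of $R$-algebras. Both multiplications are given by the same convolution formula inside $\widehat R$, which is well defined because both cones are strongly convex. So a product of elements pulled back from faces may be computed in any larger cone.

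Associativity and $R$-bilinearity of $\smile$ then follow. For $\sigma$ with ordered rays $\rho_0<\dots<\rho_{p+q+s}$, both $((f\smile g)\smile h)_\sigma$ and $(f\smile(g\smile h))_\sigma$ equal $f_{\sigma_{\le p}}\,g_{[p,p+q]}\,h_{\sigma_{\ge p+q}}$ in $\mathcal R(\sigma)$. The unit is the image of $1$ under $\Delta$.

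For the Leibniz rule $\delta(f\smile g)=\delta f\smile g+(-1)^p f\smile\delta g$, use that the orientations come from the ray ordering. Then $[\sigma:\tau]=(-1)^i$ when $\tau$ omits the $i$-th ray, exactly as for ordered simplicial complexes. The usual telescoping argument goes through verbatim: the two terms omitting $\rho_p$ cancel, and every remaining term is a product of components pulled back to $\mathcal R(\sigma)$. The only thing to check beyond the classical case is that each product term lands in $\mathcal R(\sigma)$ via compatible inclusions, which is the functoriality noted above. I expect this sign/inclusion bookkeeping to be the only mildly delicate step, and it is routine.

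\textbf{Cohomology.} Use exactness of the doubly augmented complex \eqref{eq:augmented_exact_complex} from Theorem \ref{thm:cellular}.
\begin{itemize}
\item $H^0=\ker\delta^0=\operatorname{im}\Delta\cong R$, since $\Delta$ is injective.
\item $H^{d-1}=C^{d-1}/\operatorname{im}\delta^{d-2}=C^{d-1}/\ker\int\cong\widehat R$, since $\int$ is surjective.
\item For $0<p<d-1$, $H^p=0$ by exactness at $C^p$.
\end{itemize}
The hypothesis $d>1$ ensures that degrees $0$ and $d-1$ are distinct, so the three cases do not overlap. For $d=1$ the single module $C^0$ carries both augmentations, and the formula does not apply.
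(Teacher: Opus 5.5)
Your proposal is correct and follows the paper's proof, which is just the remark that the DGA axioms are standard and the cohomology follows from Theorem~\ref{thm:cellular}. You supply the missing detail: the face inclusions $\mathcal R(\tau)\hookrightarrow\mathcal R(\sigma)$ are ring homomorphisms inside $\widehat R$, so the usual ordered-simplicial Alexander--Whitney verification applies, and the cohomology is then read off from exactness of the doubly augmented complex.

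One small wording fix in the Leibniz step: the two cancelling terms are $\delta f$ omitting $\rho_{p+1}$ and $\delta g$ omitting $\rho_p$. Both equal $f_{\sigma_{\le p}}$ times $g$ evaluated on the face spanned by $\rho_{p+1},\dots,\rho_{p+q+1}$; they are not two terms omitting $\rho_p$.
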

\begin{proof}
Verification of the DGA axioms follows standard steps, while \eqref{eq:dga_cohomology} is a consequence of Theorem \ref{thm:cellular}.
\end{proof}

\subsection{Antipodal symmetry} \label{sec:antipodal}
From now on, we will always assume that $\Sigma$ is centrally symmetric (see Definition \ref{def:central_sym}). Then the antipodal map 
\begin{equation}
 a:\Lambda_{\mathbb R}\to \Lambda_{\mathbb R}, \qquad  a(v)=-v,   
\end{equation}
induces a simplicial transformation of $S^{d-1}$. We choose a total ordering of the rays $\Sigma_0$ such that the restriction of $a$ to any cell  $\sigma \in \Sigma^\times$ is order-preserving. As every cone is strongly convex, no~cell $\sigma$ can simultaneously contain a ray $\rho$ and its antipode $-\rho$. Therefore, we can achieve the required order-preserving property by grouping the rays into antipodal pairs and ordering the pairs:
\begin{equation}
    -\rho_1 < \rho_1 < - \rho_2 < \rho_2 < \dots < -\rho_m < \rho_m,
\label{eq:rho_ordering}
\end{equation}
where $\Sigma_0 = \{ \pm \rho_1, \dots, \pm \rho_m \}$.

The map induced by $a$ on integral chains is
\begin{equation}
a_* : C_p(\Sigma;\mathbb Z)\longrightarrow C_p(\Sigma;\mathbb Z),
\qquad
a_*[\sigma]=[-\sigma].
\end{equation}
The induced map on integral cochains is
\begin{equation}
a^* : C^p(\Sigma;\mathbb Z)\longrightarrow C^p(\Sigma;\mathbb Z),
\qquad
(a^* f)_\sigma=f_{-\sigma}.
\end{equation}

Finally, the antipodal map induces an involution on $\widehat R$, given by
\begin{equation}
\sum_{\lambda \in \Lambda} a_\lambda x^\lambda \mapsto
\overline{
\sum_{\lambda \in \Lambda} a_\lambda x^\lambda}
=
\sum_{\lambda \in \Lambda} a_\lambda x^{-\lambda}.
\end{equation}
This map restricts to an isomorphism
\(
\mathcal R(\sigma)
\xrightarrow{\sim}
\mathcal R(-\sigma)
\)
for each $\sigma \in \Sigma$. 

We define an action of the antipodal map on $\mathcal R$-valued cochains by
\begin{equation}
a^* : C^p(\Sigma;\mathcal R)\longrightarrow C^p(\Sigma;\mathcal R),
\qquad
(a^*c)_\sigma
=
\overline{c_{-\sigma}}
\in \mathcal R(\sigma). \label{eq:cochain_antipode}
\end{equation}
The maps $a_*,a^*$ define an action of $C_2=\{1, a\}$ on both the chain complex $C_\bullet(\Sigma;\mathbb Z)$, and on the cochain complex $C^\bullet(\Sigma;\mathcal R)$, commuting with the respective differentials. If $\gamma  \in C_p(\Sigma;\mathbb Z)$ and $c \in C^p(\Sigma;\mathcal R)$, then
\begin{equation}
    \langle a_* \gamma,c \rangle = \overline{ \langle \gamma, a^* c \rangle}.
    \label{eq:push_pull_conjugate}
\end{equation}

\begin{lem} \label{lem:a-naturality-cup}
Let $\Sigma$ be a centrally symmetric complete fan, with rays ordered as above. Then the antipodal involution $a$ acts compatibly with the cup product: for any
$f \in C^p(\Sigma;\mathcal R)$ and $g \in C^q(\Sigma;\mathcal R)$, one has
\begin{equation}
a^*(f \smile g) = (a^*f) \smile ( a^*g) .
\end{equation}
\end{lem}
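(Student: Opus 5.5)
This is a direct componentwise computation. Fix $\sigma\in\Sigma_{p+q}$ and compute the $\sigma$-components of both sides. Let the rays of $-\sigma$ be ordered as $\rho_0<\rho_1<\cdots<\rho_{p+q}$. The ordering \eqref{eq:rho_ordering} was chosen so that $a$ restricted to any cell is order-preserving. Hence the rays of $\sigma$, in increasing order, are exactly $-\rho_0<-\rho_1<\cdots<-\rho_{p+q}$.

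The first step is to record the consequence for front and back faces:
\begin{equation}
\sigma_{\leq p}=-\bigl((-\sigma)_{\leq p}\bigr),\qquad \sigma_{\geq p}=-\bigl((-\sigma)_{\geq p}\bigr).
\end{equation}
This is the only place the ordering hypothesis enters. Without it, the front face of $\sigma$ would not correspond to the front face of $-\sigma$, and the identity would hold only up to a homotopy.

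The second step is to expand both sides using \eqref{eq:cochain_antipode} and Definition \ref{def:cup}. For the left side,
\begin{equation}
\bigl(a^*(f\smile g)\bigr)_\sigma=\overline{(f\smile g)_{-\sigma}}=\overline{f_{(-\sigma)_{\leq p}}\cdot g_{(-\sigma)_{\geq p}}}.
\end{equation}
For the right side,
\begin{equation}
\bigl((a^*f)\smile(a^*g)\bigr)_\sigma=(a^*f)_{\sigma_{\leq p}}\cdot(a^*g)_{\sigma_{\geq p}}=\overline{f_{-\sigma_{\leq p}}}\cdot\overline{g_{-\sigma_{\geq p}}}.
\end{equation}
The first step shows that the face indices on the two sides agree.

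The third step is to check that the involution $\overline{\,\cdot\,}\colon\mathcal R(-\sigma)\to\mathcal R(\sigma)$ is a ring isomorphism. By the explicit product formula in the proof of Proposition \ref{prop:flatness}, the $x^{-\lambda}$-coefficient of $\overline{fg}$ is $\sum_\mu f_{\lambda-\mu}g_\mu$. Reindexing with $\mu\mapsto-\mu$ shows this equals the $x^{-\lambda}$-coefficient of $\overline f\,\overline g$. So conjugation is multiplicative, and it also commutes with the face inclusions $\iota$, since all of these are inclusions into $\widehat R$.

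The two sides therefore agree. Nothing here is a real obstacle; the only point needing care is the first step, and it follows directly from the order-preserving choice of ordering.
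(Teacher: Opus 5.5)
Your proof is correct and takes essentially the same approach as the paper: the order-preserving choice of ray ordering gives $-\sigma_{\leq p}=(-\sigma)_{\leq p}$ and $-\sigma_{\geq p}=(-\sigma)_{\geq p}$, and the identity then follows directly from the definitions. You also check explicitly, and correctly, that conjugation $\mathcal R(-\sigma)\to\mathcal R(\sigma)$ is multiplicative, a detail the paper leaves implicit.
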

\begin{proof}
    Let $\sigma\in \Sigma_{p+q}$, and let $\rho_0< \rho_1<\cdots< \rho_{p+q}$ be the ordered rays that span $\sigma.$ Then the opposite rays $( -\rho_i )_{0 \leq i \leq p+q}$ span $-\sigma$, and we have 
    \begin{equation}
         -\rho_0<-\rho_1<\cdots< -\rho_{p+q} 
    \end{equation}
    because the order is preserved by the antipodal map. Therefore, \begin{equation}
        -\sigma_{\leq p}=(-\sigma)_{\leq p}, \qquad    -\sigma_{\geq p}=(-\sigma)_{\geq p}.
    \end{equation} 
    The claim follows from the definition of $\smile$. 
\end{proof}

In summary, the $C_2$-action on $C^\bullet(\Sigma;\mathcal R)$ is by differential graded algebra automorphisms. The $C_2$-equivariant structure will play a crucial role in the forthcoming construction of quadratic forms.

  \section{Duality and defects}

Before we delve into the main content of this section, we state a few definitions and lemmas. Given a module $M$, we introduce an associated functor $\mathcal M$, mimicking Definition \ref{def:cal_R}, with
\begin{equation}
    \mathcal M(\sigma) = \mathcal R(\sigma) \otimes M.
\end{equation}
Let us recall that $\mathcal R(\sigma)$ was defined as a submodule of $\widehat R$. However, for non-flat modules $M$ the canonical map $\mathcal M(\sigma) \longrightarrow \widehat M$ need not be injective.

Given a complete fan $\Sigma$, we assign to $M$ the chain complex $T_\bullet \otimes M$, where 
\begin{equation}
T_p := \begin{cases}
 C^{d-p-1}(\Sigma; \mathcal R), & \text{for } 0 \leq p < d , \\
R, &  \text{for } p =d ,
\end{cases}
\end{equation}
as in \eqref{eq:T_complex}. Since $T_p$ are flat modules, this construction defines an exact functor from $R$-modules to chain complexes of $R$-modules. We also let
\begin{equation}
        C^p(\Sigma;\mathcal M) = C^p(\Sigma;\mathcal R) \otimes M \cong \prod_{\sigma \in \Sigma_p} \mathcal M(\sigma). 
\end{equation}
Therefore, we will view elements of $C^p(\Sigma;\mathcal M)$ as tuples $( m_\sigma )_{\sigma \in \Sigma_p}$, where $m_\sigma \in \mathcal M(\sigma)$. The differentials of the complex $C^\bullet(\Sigma;\mathcal M)$ and the augmentation maps are obtained from those introduced in Definitions \ref{def:cochains_with_coeff} and \ref{def:augmentation} by tensoring with $\operatorname{id}_M$. With a slight abuse of notation, we will abbreviate $\Delta \otimes \operatorname{id}_M $, $\delta^p \otimes \operatorname{id}_M$ and $\int \otimes \operatorname{id}_M$ to $\Delta, \delta^k$ and $\int$. We observe that $C^\bullet(\Sigma;\mathcal M)$ is a DG module over DGA $C^\bullet(\Sigma;\mathcal R)$.

We will use analogous notation for other modules, for instance $C^p(\Sigma;\mathcal N) = C^p(\Sigma;\mathcal R) \otimes N$ for a~module $N$.

The following corollary follows directly from Theorem \ref{thm:cellular}.
    \begin{cor} \label{cor:TM_homology}
    The complex $T_\bullet \otimes M$ has homology $\mathrm{Tor}^R_\bullet(\widehat R,M)$.  
\end{cor}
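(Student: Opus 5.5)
The argument is short: it only needs the exactness statement of Theorem~\ref{thm:cellular} and the flatness of the modules $T_p$. I would read the complex $T_\bullet$ of \eqref{eq:T_complex} as a flat resolution of $\widehat R$. Concretely, $T_d = R$ and $T_p = C^{d-p-1}(\Sigma;\mathcal R)$ for $0\le p<d$. The augmentation to $\widehat R$ is the integration map $\int\colon T_0 = C^{d-1}(\Sigma;\mathcal R)\to\widehat R$.

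By Theorem~\ref{thm:cellular}, the doubly augmented sequence \eqref{eq:augmented_exact_complex} is exact. Hence $T_\bullet \to \widehat R$ is a quasi-isomorphism of complexes. The same theorem says each $T_p$ is flat: $T_d=R$ is free, and each $T_p$ with $p<d$ is a finite product of the flat modules $\mathcal R(\sigma)$. Finite products are finite direct sums, so flatness is preserved; this is also how flatness is asserted in Theorem~\ref{thm:cellular}. Thus $T_\bullet$ is a bounded flat resolution of $\widehat R$.

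Next I invoke the standard fact that $\Tor$ may be computed from a flat resolution of either variable. Flat modules are $\Tor$-acyclic, so the usual dimension-shifting argument, or the balancing double complex with a projective resolution of $M$, gives
\begin{equation}
H_p(T_\bullet\otimes_R M)\cong \Tor^R_p(\widehat R, M).
\end{equation}
Here I tensor only the resolution $T_\bullet$ with $M$, not the augmentation term $\widehat R$.

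I would make the identification explicit with the double complex $T_\bullet\otimes F_\bullet$, where $F_\bullet\to M$ is a free resolution. The rows compute $T_p\otimes M$, because each $T_p$ is flat and so $T_p\otimes F_\bullet$ resolves $T_p\otimes M$. The columns compute $\widehat R\otimes F_\bullet$, because each $F_q$ is flat and $T_\bullet$ is a resolution. Both spectral sequences collapse, and comparing them yields the isomorphism. This is routine and is presumably among the facts recorded in Appendix~A.

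No step is a real obstacle. The one point to watch is indexing: the degree-$p$ term of $T_\bullet$ must match homological degree $p$, with $T_0$ mapping onto $\widehat R$. The shift $T_p=C^{d-p-1}$ was chosen exactly for this, so it works out.
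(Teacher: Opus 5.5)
Your proposal is correct and follows the same route as the paper, which derives the corollary directly from Theorem~\ref{thm:cellular}: $T_\bullet$ is a bounded flat resolution of $\widehat R$, since each $T_p$ is a finite product, hence a finite direct sum, of the flat modules $\mathcal R(\sigma)$, and therefore $T_\bullet\otimes_R M$ computes $\Tor^R_\bullet(\widehat R,M)$ because $\Tor$ can be computed from a flat resolution of either variable. Your indexing check and your double-complex justification via $T_\bullet\otimes F_\bullet$ are both sound.
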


We state without proof a result in Section~5 of~\cite{ruba2024homological}. The version for $k=\mathbb F_p$ was proven in~\cite{haah2013commuting}.

\begin{lem}[Mobility lemma] \label{lem: mobile}
Let $\mathfrak a\subset R$ be a proper ideal. Then
\(
    \dim(R/\mathfrak a)=0
\)
if and only if there exists a positive integer $D$ such that
\(
    x_i^D-1\in \mathfrak a
\)
for every $i=1,\dots,d$.

In particular, taking $\mathfrak a=\mathrm{Ann}(M)$, this implies that finite length modules have finite cardinality.
\end{lem}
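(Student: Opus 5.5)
We prove the two directions of the equivalence separately, and the key input for both the forward direction and the final assertion is that every residue field of $R$ at a maximal ideal is finite.

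First, the residue fields. Let $\mathfrak m\subset R$ be a maximal ideal. Since $n=0$ in $R$, the field $R/\mathfrak m$ has positive characteristic $p$ for some prime $p\mid n$. It is therefore a field finitely generated as an algebra over $\mathbb F_p$, by the images of $x_i^{\pm1}$. By Zariski's lemma (the weak Nullstellensatz), $R/\mathfrak m$ is a finite extension of $\mathbb F_p$, hence a finite field. Consequently, any $R$-module of finite length has finite cardinality: it has a composition series whose factors are of the form $R/\mathfrak m$, each of which is finite.

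Next, the implication from the existence of $D$ to $\dim(R/\mathfrak a)=0$. Suppose $x_i^D-1\in\mathfrak a$ for all $i$. Then $R/\mathfrak a$ is a quotient of
\begin{equation}
k[x_1,\dots,x_d]/(x_1^D-1,\dots,x_d^D-1).
\end{equation}
In this ring each $x_i$ is already invertible, so it is a quotient of $R$. As a $k$-module it is free on the monomials $x^\lambda$ with $0\le\lambda_i<D$, so it is a finite ring. A finite ring is Artinian, so $R/\mathfrak a$ is Artinian and $\dim(R/\mathfrak a)=0$. Since $\mathfrak a$ is proper, the ring $R/\mathfrak a$ is nonzero, so this dimension is well defined.

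Conversely, suppose $\dim(R/\mathfrak a)=0$. Since $R$ is Noetherian, $R/\mathfrak a$ is Artinian, so it has finite length as a module over itself, and hence as an $R$-module. By the first step, $R/\mathfrak a$ is a finite ring. Its unit group $(R/\mathfrak a)^\times$ is then a finite group containing the images of the units $x_i$. Let $D_i$ be the order of the image of $x_i$ and put $D=\operatorname{lcm}(D_1,\dots,D_d)$. Then $x_i^D-1\in\mathfrak a$ for every $i$.

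Finally, the assertion about finite length modules. Let $M$ be a nonzero module of finite length, generated by $m_1,\dots,m_t$. The map
\begin{equation}
R/\operatorname{Ann}(M)\hookrightarrow M^t,\qquad r\mapsto (rm_1,\dots,rm_t),
\end{equation}
is injective, so $R/\operatorname{Ann}(M)$ has finite length, and $\operatorname{Ann}(M)$ is proper because $M\neq0$. Thus $\dim(R/\operatorname{Ann}(M))=0$, and the equivalence provides the exponent $D$. Finiteness of the cardinality of $M$ follows directly from the first step. Alternatively, $M$ is a finitely generated module over the finite ring $R/\operatorname{Ann}(M)$, hence finite.

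The only nonformal ingredient is the finiteness of the residue fields via Zariski's lemma. Once that is in place, every remaining step is a standard manipulation with Artinian rings.
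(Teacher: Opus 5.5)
Your proof is correct and complete. The paper itself does not prove this lemma: it is stated without proof and attributed to Section~5 of \cite{ruba2024homological}, and to \cite{haah2013commuting} for $k=\mathbb F_p$. So there is no in-paper argument to compare against, and your proof stands on its own.

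The only non-formal input is finiteness of the residue fields $R/\mathfrak m$. You obtain it correctly from $\operatorname{char} R/\mathfrak m = p \mid n$ together with Zariski's lemma. The remaining steps are standard and correctly handled:
\begin{enumerate}
\item[(i)] A zero-dimensional Noetherian ring is Artinian, hence of finite length, hence finite; the finite unit group then yields $D$.
\item[(ii)] Conversely, $R/\mathfrak a$ is a quotient of the finite ring $R/(x_1^D-1,\dots,x_d^D-1)\cong k[x_1,\dots,x_d]/(x_1^D-1,\dots,x_d^D-1)$.
\item[(iii)] You correctly use properness of $\mathfrak a$, and of $\operatorname{Ann}(M)$ for $M\neq 0$, so that the dimensions are actually $0$.
\end{enumerate}

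Your argument is ordered differently from how the statement is phrased. The lemma presents finiteness of finite-length modules as a \emph{consequence} of the equivalence: $M$ is finitely generated over the finite ring $R/\operatorname{Ann}(M)$, which is a quotient of $R/(x_1^D-1,\dots,x_d^D-1)$. In your argument, that finiteness is instead the engine that produces $D$. Both orderings are non-circular, since your first step depends only on Zariski's lemma.

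Your ordering has the merit of isolating exactly where the finiteness of $k$ enters. Over a field of characteristic zero the equivalence fails: for $\mathfrak a=(x_1-2)\subset\mathbb Q[x_1^{\pm1}]$, the quotient is $\mathbb Q$, of dimension $0$, yet $x_1^D-1\notin\mathfrak a$ for every $D\geq 1$.
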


We note an important consequence below.  

\begin{lem} \label{lem:tensor_kills}
    If $M$ is an $R$-module of finite length, then $T_p \otimes M =0$ for $p \neq d$.
\end{lem}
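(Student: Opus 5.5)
The plan is to reduce to showing $\mathcal R(\sigma)\otimes_R M=0$ for every nontrivial cone $\sigma\in\Sigma^\times$. For $0\le p<d$ we have $T_p=C^{d-p-1}(\Sigma;\mathcal R)=\prod_{\sigma\in\Sigma_{d-p-1}}\mathcal R(\sigma)$, and this product is finite. Tensor products commute with finite products, so $T_p\otimes M\cong\prod_\sigma \mathcal R(\sigma)\otimes M$. Moreover, every $\sigma\in\Sigma_{d-p-1}$ has dimension $d-p\ge1$, so $\sigma$ contains a nonzero lattice vector $\lambda\in\sigma\cap\Lambda$.

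By the Mobility lemma (Lemma \ref{lem: mobile}) applied to $\mathfrak a=\operatorname{Ann}(M)$, there is $D>0$ with $x_i^D-1\in\operatorname{Ann}(M)$ for all $i$. If $M=0$ there is nothing to prove, so we may take $\mathfrak a$ proper. It follows that $x^{D\mu}-1$ annihilates $M$ for every $\mu\in\Lambda$, because $x^{D\mu}-1$ lies in the ideal generated by the $x_i^D-1$. In particular $u:=x^{D\lambda}-1$ annihilates $M$.

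The key step is to show that $u$ is a unit in the ring $\mathcal R(\sigma)$, using the algebra structure from Proposition \ref{prop:flatness}. Set $\nu=D\lambda\in\sigma\cap\Lambda$, which is nonzero. The geometric series $g=-\sum_{j\ge0}x^{j\nu}$ has support in the ray $\mathbb R_{\ge0}\nu\subset\sigma$, so $g\in\mathcal R(\sigma)$. The product $(x^\nu-1)g$ is well defined because $\sigma$ is strongly convex, and it telescopes to $1$.

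Once $u$ is known to be a unit, the conclusion follows. For $f\in\mathcal R(\sigma)$ and $m\in M$,
\[
f\otimes m=fu^{-1}u\otimes m=fu^{-1}\otimes um=0.
\]
Hence $\mathcal R(\sigma)\otimes M=0$, and therefore $T_p\otimes M=0$ for all $p\neq d$.

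The main point needing care is that the inverse $u^{-1}$ really lies in $\mathcal R(\sigma)$ and that the tensor manipulation is legitimate. Both hold: $\mathcal R(\sigma)\otimes_R M$ is an $\mathcal R(\sigma)$-module on which $u$ acts both invertibly and as zero, which forces the module to vanish. Everything else is formal.
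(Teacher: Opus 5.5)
Your proof is correct and follows the paper's argument. You reduce to $\mathcal R(\sigma)\otimes M=0$ for $\sigma\in\Sigma^\times$, use the mobility lemma to find $x^{D\lambda}-1$ with $0\neq\lambda\in\sigma$ annihilating $M$, and invert it in $\mathcal R(\sigma)$ by a geometric series supported on a ray of $\sigma$. The details the paper leaves implicit (finiteness of the product, passing from $x_i^D-1$ to $x^{D\lambda}-1$, and the telescoping check) are all handled correctly.
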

\begin{proof}
    It is enough to show that $\mathcal R(\sigma) \otimes M =0$ for every $\sigma \in \Sigma^\times$. Let $y \in R$ be a monomial supported in $\sigma$. Then $y$ is invertible in $R$, and by Lemma~\ref{lem: mobile} there exists a positive integer $k$ such that $1-y^k$ annihilates $M$. Since $y$ is supported in $\sigma$, the element $1-y^k$ is invertible in $\mathcal R(\sigma)$, with inverse given by the geometric series $\sum_{j=0}^\infty y^{kj}$. 
\end{proof}

\subsection{Duality pairs}

Recall that the notion of a duality pair was introduced in Definition \ref{def:sesquilinear}.

\begin{exmp} \label{exmp:ev_pair}
    Let $F$ be a finitely generated free module. We have the duality pair $(F, F^*, \mathrm{ev})$, where $\mathrm{ev}$ is the evaluation map:
    \begin{equation}
        \mathrm{ev}(m, \varphi) = \varphi(m), \qquad m \in F, \ \  \varphi \in F^*.
    \end{equation} 
\end{exmp}
\begin{exmp} \label{exmp: symmetric}
    We are particularly interested in two types of duality pairs:
    \begin{itemize}
        \item Modules with an $\varepsilon$-hermitian form --- $(P,P,\Omega)$ with $\Omega$ satisfying $\Omega(p,p') = \varepsilon \, \overline{\Omega(p',p)}$ for some $\varepsilon \in \{ \pm 1 \}$.
        \item Pairs $(Q,L)$, where $P$ is a module with an $\varepsilon$-hermitian form, $L \subset P$ is a Lagrangian submodule, and $Q=P/L$. 
    \end{itemize} 
    We will return to these specific cases in dedicated Subsections \ref{sec:self-pairing} and \ref{sec:Lagrangian}.
    Both settings, with $\varepsilon =-1$, are essential in the theory of Pauli stabilizer codes with translation symmetry. 
\end{exmp}

The nondegeneracy assumption implies that $\Omega$ induces inclusions 
\begin{equation}
    \lambda_N : N \longrightarrow M^*, \qquad \lambda_M : M \longrightarrow  N^*,
\end{equation}
see Definition \ref{def:sesquilinear}.
Secondly, $\Omega$ extends canonically to pairings $\widehat M \times N \to \widehat R$ and $M \times \widehat N \to \widehat R$, inducing maps
\begin{equation}
    \widetilde \lambda_N : \widehat N \longrightarrow M^{\#}, \qquad \qquad \widetilde \lambda_M : \widehat M \longrightarrow N^{\#}.
\end{equation}

\begin{lem} \label{lem:surjectivity}
The maps $\widetilde \lambda_N, \widetilde \lambda_M$ are surjective. 
\end{lem}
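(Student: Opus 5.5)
We want to show that $\widetilde\lambda_M\colon \widehat M\to N^\#$ is surjective; the case of $\widetilde\lambda_N$ follows by exchanging the roles of $M$ and $N$. The idea is to factor $\widetilde\lambda_M$ through the character-dual of the injection $\lambda_N$, and then use exactness of $(-)^\#$ together with Lemma~\ref{lem:star_hat_sharp}.

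First we reduce to a free module. Choose a surjection $\pi\colon F\to M$ from a finitely generated free module. Since $\widehat R\otimes -$ is right exact, $\widehat R\otimes\pi\colon \widehat F\to\widehat M$ is surjective. It therefore suffices to show that the composite $\widehat F\to\widehat M\to N^\#$ is surjective.

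Next we identify this composite. By Lemma~\ref{lem:star_hat_sharp}, $\widehat F\cong \widehat R\otimes F\cong (F^*)^\#$; here we use $F\cong F^{**}$, which holds because $F$ is finitely generated free. The composite $N\xrightarrow{\lambda_N}M^*\xrightarrow{\pi^*}F^*$ is injective: $\pi^*$ is injective because $\pi$ is surjective, and $\lambda_N$ is injective by nondegeneracy. Since $\widehat R$ is injective, equivalently since $k$ is self-injective, the functor $(-)^\#=\Hom_R(\overline{(-)},\widehat R)$ is exact and sends injections to surjections. Hence
\[
(\pi^*\circ\lambda_N)^\#\colon (F^*)^\#\longrightarrow N^\#
\]
is surjective.

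The remaining and main step is a bookkeeping check: under the identification $\widehat F\cong (F^*)^\#$, the map $(\pi^*\lambda_N)^\#$ coincides with $\widetilde\lambda_M\circ(\widehat R\otimes\pi)$. It suffices to check this on elements $f\otimes e$ with $f\in\widehat R$ and $e\in F$, and one can further reduce to $F=R$. Both sides send $f\otimes e$ to the character
\[
\overline n\longmapsto \overline f\,\Omega(\pi e,n)
\]
(or its conjugate, depending on convention), viewed in $\Hom_R(\overline N,\widehat R)$. The work lies in tracking the involutions: the identification $M\to N^*$ involves a conjugate, and $\widehat R\cong R^\#$ uses $f\mapsto (r\mapsto(\overline r f)_0)$. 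Any mismatch in conventions changes the maps only by the involution on $\widehat R$, which is bijective, so surjectivity is unaffected.
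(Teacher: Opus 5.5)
Your argument is correct, and it takes a different and more economical route than the paper. The paper factors $\widetilde\lambda_N$ as $\widehat N\to\widehat R\otimes M^*\xrightarrow{\theta}M^\#$ and proves each factor surjective separately. The first factor is surjective because $\coker(\lambda_N)$ is a finitely generated torsion module, hence annihilated by a regular element $f$ with $f\widehat R=\widehat R$. The second is surjective because $M$ is torsion-free, so it embeds in a finitely generated free module whose character dual surjects onto $M^\#$. Both facts are imported from earlier work.

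You instead precompose $\widetilde\lambda_M$ with the surjection $\widehat F\to\widehat M$. Via Lemma~\ref{lem:star_hat_sharp} applied to $F^*$, you recognize the composite as the character dual of the injection $\pi^*\lambda_N\colon N\hookrightarrow F^*$. This is in effect the degree-zero case of Proposition~\ref{prop:Ext-Tor-duality}, under which $\widehat M\cong (M^*)^\#$ and $\widetilde\lambda_M$ becomes $\lambda_N^\#$. The paper uses the same identification later, in the proof of Proposition~\ref{prop:eval}.

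What your route buys:
\begin{enumerate}
\item It needs nothing beyond exactness of $(-)^\#$.
\item It isolates the sharp hypothesis: surjectivity of $\widetilde\lambda_M$ uses only injectivity of $\lambda_N$, and by faithfulness of $(-)^\#$ it is in fact equivalent to it. The paper's argument for $\widetilde\lambda_N$ draws on full nondegeneracy, through the torsion-cokernel statement.
\end{enumerate}
What the paper's factorization buys is the extra information that $\widehat R\otimes\coker(\lambda_N)=0$ and that $\theta$ is onto.

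Your hedge about conventions is unnecessary. Take $\lambda_M(m)=\overline{\Omega(m,\mathord\cdot)}$ and the isomorphism $F\cong F^{**}$ given by $e\mapsto\bigl(\overline\varphi\mapsto\overline{\varphi(e)}\bigr)$. Then both maps send $f\otimes e$ to $\overline n\mapsto f\,\overline{\Omega(\pi e,n)}$ exactly.
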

\begin{proof}
We represent $\widetilde \lambda_N$ as the composition
\begin{equation}
     \widehat N \xrightarrow{\widehat \lambda_N} \widehat R \otimes \Hom(\overline M, R) \xrightarrow{\theta} \Hom(\overline M, \widehat R),
\end{equation}
where $\widehat \lambda_N = \operatorname{id}_{\widehat R} \otimes \lambda_N$, and $\theta$ is the canonical map
\(
    \theta( f \otimes \varphi )(m) = \varphi(m) f.
\)
We will show that both $\widehat \lambda_N$ and $\theta$ are surjective.

Firstly, the nondegeneracy of $\Omega$ implies that $\coker(\lambda_N)$ is a torsion module; see \cite[Proposition 10]{ruba2024homological} for the argument spelled out in a slightly less general setting. Since $\coker(\lambda_N)$ is finitely generated, it is annihilated by some regular element $f \in R$. We have $f \widehat R = \widehat R$ by the injectivity of $\widehat R$, and therefore $\widehat R \otimes \coker(\lambda_N) =0$. By the right-exactness of tensor products, we~have the exact sequence
\begin{equation}
    \widehat N \xrightarrow{\widehat \lambda_N} \widehat R \otimes M^* \longrightarrow \widehat R \otimes \coker(\lambda_N) \longrightarrow 0.
\end{equation}

Secondly, $M$ is torsion-free and admits an embedding $i : M \to F$ in a finitely generated free module $F$, see e.g.\ \cite[Lemma 8]{ruba2024homological}. We have the commutative diagram
\begin{equation}
    \begin{tikzcd}
\widehat{R} \otimes_R F^* \arrow[r, "\sim"] \arrow[d] & F^{\#} \arrow[d, "i^{\#}"] \\
\widehat{R} \otimes_R M^* \arrow[r, "\theta"]       & M^{\#}
\end{tikzcd},
\end{equation}
in which the top horizontal arrow is an isomorphism because $F$ is finitely generated free, and~$i^{\#}$ is surjective because $( -)^{\#}$ is an exact functor. The~composition 
\( \widehat{R} \otimes_R F^* \longrightarrow \widehat{R} \otimes_R M^* \xrightarrow{\theta} M^{\#} \) is surjective, so $\theta$ is surjective.
\end{proof}

We define two homological invariants associated with duality pairs, obtained by modifying the derived functors of $(-)^*$ and $\widehat{(-)}$. They measure the failure of $M$ and $N$ to be projective and of $\Omega$ to be unimodular. In the Introduction, we referred to these invariants as \textit{duality defects}, or~simply \textit{defects}.

\begin{defn} \label{def:aug_Hom}
Let $F_\bullet \xrightarrow{\sigma} M$ be a resolution of $M$ with finitely generated free modules. We augment the Hom complex $\Hom(\overline F_\bullet,R)$ with the map $N \xrightarrow{\sigma^* \lambda_N} F_0^*$:
\begin{equation}
    0 \longrightarrow N \xrightarrow{\sigma^* \lambda_N} F_0^* \longrightarrow F_1^* \longrightarrow F_2^* \longrightarrow \dots .
    \label{eq:augmented_Hom_complex}
\end{equation}
We denote the cohomology of the complex \eqref{eq:augmented_Hom_complex} by $\widetilde \Ext^\bullet(\overline M,R)$. We highlight that this construction depends on $\Omega$. 
\end{defn}

\begin{defn} \label{def:aug_tensor}
    Let $F_\bullet \xrightarrow{\sigma} M$ be a resolution of $M$ with finitely generated free modules. We augment the tensor product complex $\widehat R \otimes F_\bullet$ with the map $\widehat F_0 \xrightarrow{\widetilde \lambda_M \widehat \sigma } N^{\#}$:
    \begin{equation}
        \dots \longrightarrow \widehat F_2 \longrightarrow \widehat F_1 \longrightarrow \widehat F_0 \longrightarrow N^{\#} \longrightarrow 0.
        \label{eq:augmented_tensor_complex}
    \end{equation}
    We denote the homology of the complex \eqref{eq:augmented_tensor_complex} by $\widetilde \Tor_\bullet(\widehat R, M)$. 
\end{defn}

\begin{prop}
The complexes in \eqref{eq:augmented_Hom_complex} and \eqref{eq:augmented_tensor_complex} have the following (co)homology:
\begin{subequations} \label{eq:cohomology_eval}
\begin{align} 
    \widetilde \Ext^p(\overline M,R) &= \begin{cases}
        \Ext^p(\overline M,R), & p \neq 0, \\
        \coker( \lambda_N \colon N \longrightarrow M^*), & p =0.
    \end{cases} \\
        \widetilde \Tor_p(\widehat R,M) &= \begin{cases}
        \Tor_p (\widehat R,M), & p \neq 0, \\
        \ker(\widetilde \lambda_M \colon \widehat M \longrightarrow N^{\#}), & p =0 . \label{eq:cohomology_eval_Tor}
    \end{cases} 
\end{align}
\end{subequations}
\end{prop}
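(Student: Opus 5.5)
The computation splits into degrees away from the augmentation and the degree where the augmentation enters.

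\emph{Degrees away from the augmentation.} For $p\geq 2$ in the Hom complex \eqref{eq:augmented_Hom_complex}, and $p\geq 2$ in the tensor complex \eqref{eq:augmented_tensor_complex}, the augmentation plays no role. The (co)homology there is by definition $\Ext^p(\overline M,R)$, respectively $\Tor_p(\widehat R,M)$, computed from the free resolution $F_\bullet$.

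\emph{The Hom complex at $p=1$ and $p=0$.} At $p=1$ the cohomology is $\ker(F_1^*\to F_2^*)/\operatorname{im}(F_0^*\to F_1^*)$. This is unaffected by the augmentation, so it equals $\Ext^1$. At $p=0$ I compute cohomology at $F_0^*$. By left exactness of $(-)^*$ applied to $F_1\to F_0\to M\to 0$, we have $\ker(F_0^*\to F_1^*)=\sigma^*(M^*)\cong M^*$, with $\sigma^*$ injective. The image of $N$ is $\sigma^*\lambda_N(N)$. Hence the cohomology is $M^*/\lambda_N(N)=\coker\lambda_N$. In degree $-1$ (at $N$) the cohomology vanishes because $\sigma^*\lambda_N$ is injective, by nondegeneracy and injectivity of $\sigma^*$. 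The statement implicitly indexes only $p\geq 0$, but I will note this anyway.

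\emph{The tensor complex at $p=1$ and $p=0$.} At $p=1$ the homology is unchanged from $\Tor_1$. At $p=0$ the homology at $\widehat F_0$ is $\ker(\widetilde\lambda_M\widehat\sigma)/\operatorname{im}(\widehat F_1)$. By right exactness, $\widehat F_1\to\widehat F_0\to\widehat M\to 0$ is exact, so $\widehat\sigma$ induces $\widehat F_0/\operatorname{im}\widehat F_1\cong\widehat M$. Under this isomorphism the subquotient becomes $\ker(\widetilde\lambda_M\colon\widehat M\to N^\#)$. Finally, the homology at $N^\#$ is $\coker(\widetilde\lambda_M\widehat\sigma)$, which vanishes since $\widehat\sigma$ is surjective and $\widetilde\lambda_M$ is surjective by Lemma~\ref{lem:surjectivity}. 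This is the only nonformal input, and it ensures no extra term appears in degree $-1$.

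Independence from the choice of $F_\bullet$ follows because any two free resolutions are chain homotopy equivalent over $\operatorname{id}_M$, and this equivalence is compatible with the augmentations, which factor through $\sigma$. I expect no real obstacle here; the only care needed is in identifying the degree-zero kernels via the exactness properties of $(-)^*$ and $\widehat{(-)}$.
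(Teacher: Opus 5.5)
Your proposal is correct and takes essentially the same approach as the paper. Degrees $p\neq 0$ are unaffected by the augmentation; degree $0$ is identified using left exactness of $(-)^*$ (identifying $M^*$ with $\operatorname{im}\sigma^*$) and right exactness of $\widehat{(-)}$; and the degree $-1$ terms vanish by injectivity of $\sigma^*\lambda_N$ and by Lemma~\ref{lem:surjectivity}, with your remark on independence of the resolution being harmless but not needed.
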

\begin{proof}
The case $p \neq 0$ is clear from the definition of extension and torsion groups. For $p=0$, we~have $\widetilde \Ext^0(\overline M,R)= \frac{\Ext^0(\overline M,R)}{\operatorname{im}(\sigma^* \lambda_N)} = \coker(\lambda_N)$, where we identify $M^*$ with $\operatorname{im}(\sigma^*) \subset F_0^*$. Finally, $\widetilde \Ext^{-1}(\overline M,R)=0$ because $\sigma^* \lambda_N$ is injective.

The proof of \eqref{eq:cohomology_eval_Tor} is analogous. Vanishing of $\widetilde \Tor_{-1}(\widehat R,M)$ follows from Lemma \ref{lem:surjectivity}.
\end{proof}

\begin{remark}
    The complexes \eqref{eq:augmented_Hom_complex} and \eqref{eq:augmented_tensor_complex} represent the mapping cones $N \to \operatorname{RHom}(\overline M,R)$ and $\widehat R \otimes^L M \to N^{\#}$ in the derived category of $R$-modules.

    In general, quasi-isomorphism classes of complexes contain more data than their (co)homology. Therefore, we view the complexes \eqref{eq:augmented_Hom_complex}, \eqref{eq:augmented_tensor_complex} as more fundamental than modules in \eqref{eq:cohomology_eval}. However, we will not pursue a treatment fully formulated using derived categories in this article.
\end{remark}

\begin{remark}[Stabilization]
The modules $\widetilde \Ext^p(\overline M,R)$ and $\widetilde \Tor_p(\widehat R,M) $ are invariant under replacing $(M,N)$ with $(M \oplus F, N \oplus F^*)$, where $F$ is a finite free module. In degree $p=0$, this property fails for the standard extension and torsion modules $\Ext^0(\overline M,R)= M^*$ and $\Tor_0(\widehat R,M) = \widehat M$.
\end{remark}

\begin{remark}
    Definitions \ref{def:aug_Hom} and \ref{def:aug_tensor} may be applied with the roles of $M$ and $N$ interchanged. In the cases of Example~\ref{exmp: symmetric}, the resulting invariants coincide, possibly up to a degree shift; see~Sections~\ref{sec:self-pairing} and~\ref{sec:Lagrangian}.
\end{remark}

The invariants introduced in Definitions \ref{def:aug_Hom} and \ref{def:aug_tensor} are dual to each other in the following sense.

\begin{prop} \label{prop:eval}
    The evaluation map \( (v, \varphi) \mapsto \widehat \varphi (v) \in \widehat R \), where $\varphi \in F_p^*$, $\widehat \varphi = \mathrm{id}_{\widehat R} \otimes \varphi$, and $v \in \widehat F_p$, descends to a~sesquilinear pairing
    \begin{equation}
   \operatorname{ev}_p \colon   \widetilde \Tor_p(\widehat R,M) \times  \widetilde \Ext^p( \overline M,R)   \longrightarrow \widehat R.
   \label{eq:evaluation_pairing}
    \end{equation}
    This pairing induces canonical isomorphisms, for all $p$,
    \begin{equation}
        \widetilde \Tor_p(\widehat R,M) \cong \widetilde \Ext^p(\overline M,R)^{\#}.
        \label{eq:Tor_Ext_dual_ev}
    \end{equation}
\end{prop}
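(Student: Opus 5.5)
The plan is to exhibit the augmented tensor complex \eqref{eq:augmented_tensor_complex} as the character dual of the augmented Hom complex \eqref{eq:augmented_Hom_complex}, compatibly with the evaluation pairing, and then invoke the exactness of $(-)^\#$, exactly as in Proposition \ref{prop:Ext-Tor-duality}.

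\textbf{Degreewise identification.} For $p\ge 0$, Lemma \ref{lem:star_hat_sharp} (applied to $F_p^*$, using $F_p^{**}\cong F_p$) gives $\widehat F_p\cong (F_p^*)^\#$. Under this isomorphism, $v\in\widehat F_p$ corresponds to the functional $\varphi\mapsto \widehat\varphi(v)$, up to the conjugation convention built into $(-)^\#$. In the augmentation degree we match $N^\#$ with the term $N$ of \eqref{eq:augmented_Hom_complex}. The evaluation pairing between $N^\#=\Hom_R(\overline N,\widehat R)$ and $N$ is the tautological one.

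\textbf{Compatibility of differentials.} Next we check that the differentials are adjoint under these pairings. For $p\ge1$ this is the identity $\widehat{\varphi}(\widehat{d}v)=\widehat{(\varphi\circ d)}(v)$. In the augmentation degree we need
\begin{equation}
\widetilde\lambda_M(\widehat\sigma(v))(n)=\widehat{(\sigma^*\lambda_N n)}(v)
\end{equation}
for $v\in\widehat F_0$ and $n\in N$, up to the conjugation convention. Both sides equal the extension of $\Omega(\sigma(\cdot),n)$ to $\widehat F_0$. This is where care is needed with sesquilinearity: $\lambda_N$ is defined via $\Omega(\cdot,n)$, while $\lambda_M$ uses the conjugate $\overline{\Omega(m,\cdot)}$, so the two augmentation maps are dual only after the bar twist. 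I expect this sign and conjugation bookkeeping to be the only delicate point. Once it is settled, \eqref{eq:augmented_tensor_complex} is isomorphic, as a complex, to $\Hom_R(\overline{C},\widehat R)$, where $C$ denotes the complex \eqref{eq:augmented_Hom_complex}.

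\textbf{Conclusion.} Because the pairing satisfies $\langle dv,\varphi\rangle=\langle v,d\varphi\rangle$, it vanishes on boundaries paired with cycles and on cycles paired with boundaries. It therefore descends to a sesquilinear pairing $\operatorname{ev}_p$ on (co)homology, which gives \eqref{eq:evaluation_pairing}. Since $\widehat R$ is injective, $(-)^\#$ is exact and hence commutes with taking (co)homology. This gives $H_p(C^\#)\cong H^p(C)^\#$, with the map induced by $\operatorname{ev}_p$, and so yields \eqref{eq:Tor_Ext_dual_ev} for all $p$, including $p=0$ and the vanishing degree $-1$.
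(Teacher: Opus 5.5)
Your proposal is correct and takes essentially the same route as the paper. Both arguments rest on the degreewise identification $\widehat F_p \cong (F_p^*)^\#$ via evaluation, on matching the augmentation $\widetilde\lambda_M\widehat\sigma$ with the dual of $\sigma^*\lambda_N$, and on exactness of $(-)^\#$. The paper treats $p>0$ via Proposition~\ref{prop:Ext-Tor-duality} and handles $p=0$ separately by dualizing $0\to N\to M^*\to\widetilde\Ext^0(\overline M,R)\to 0$. You instead dualize the whole augmented complex at once, which is cleaner and also gives the vanishing in degree $-1$ (hence surjectivity of $\widetilde\lambda_M$) directly from the injectivity of $\sigma^*\lambda_N$.
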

\begin{proof}
For $p >0$ this is covered by Proposition \ref{prop:Ext-Tor-duality}. It remains to consider the case $p=0$. We have the short exact sequence
\begin{equation}
    0 \longrightarrow N \xrightarrow{\lambda_N} M^* \longrightarrow \widetilde \Ext^0 (\overline M,R) \longrightarrow 0,
\end{equation}
so, by the exactness of $(-)^{\#}$:
\begin{equation}
\begin{split}
    \widetilde \Ext^0 (\overline M,R)^{\#} &\cong \{ \varphi \in (M^*)^{\#}  \mid \left. \varphi \right|_{\operatorname{im}(\lambda_N)} =0 \} \\
    & \cong \{ v \in \widehat M \mid \widetilde \lambda_M(v) = 0 \} \cong \widetilde \Tor_0(\widehat R,M),
\end{split}
\end{equation}
where in the second isomorphism we used Proposition \ref{prop:Ext-Tor-duality}. 
\end{proof}

\begin{cor} \label{cor:equal_Krull}
The modules $\widetilde \Ext^p(\overline M,R)$ and $ \widetilde \Tor_p(\widehat R,M)$ have equal Krull dimensions for every $p$. Moreover, for every $p$ the following conditions are equivalent:
\begin{enumerate}
    \item $\widetilde \Ext^p(\overline M,R)$ has finite length,
    \item  $\widetilde \Tor_p(\widehat R,M)$ has finite length,
    \item  $\widetilde \Tor_p(\widehat R,M)$ is finitely generated.
\end{enumerate} 
\end{cor}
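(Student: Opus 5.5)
The whole argument rests on the duality of Proposition \ref{prop:eval}. For $E:=\widetilde \Ext^p(\overline M,R)$ and $T:=\widetilde \Tor_p(\widehat R,M)$ it gives an isomorphism $T\cong E^{\#}=\Hom_k(E,k)$, with the $R$-action twisted by the involution. The plan is to read off each claim about $T$ from the corresponding property of $E$, using that $(-)^{\#}$ is an exact, faithful duality on $k$-modules. Note that $E$ is finitely generated: for $p>0$ it is an Ext module of a finitely generated module over a Noetherian ring, and for $p=0$ it is a quotient of $M^*$.

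\textbf{Krull dimensions.} Work with supports. For a module $X$, the annihilators satisfy $\operatorname{Ann}(X^{\#})=\overline{\operatorname{Ann}(X)}$. Indeed, $r$ kills $X^{\#}$ iff $\varphi(\overline r x)=0$ for all $\varphi$ and $x$, iff $\overline r X=0$, because characters separate points ($k$ is self-injective, or one can use Pontryagin duality). Since $E$ is finitely generated, $\dim E=\dim R/\operatorname{Ann}(E)$. The involution is a ring automorphism, so $R/\overline{\operatorname{Ann}(E)}\cong R/\operatorname{Ann}(E)$ and $\dim R/\operatorname{Ann}(T)=\dim E$.

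The one delicate point is that $T$ need not be finitely generated, so one must fix what $\dim T$ means. I will take it to be the supremum of the Krull dimensions of its finitely generated submodules, or equivalently $\dim R/\operatorname{Ann}(T)$ if Appendix B uses that convention. These agree here. Every finitely generated submodule of $T$ has annihilator containing $\operatorname{Ann}(T)$, so the supremum is at most $\dim R/\operatorname{Ann}(T)$. For the reverse inequality, $\operatorname{Ann}(T)$ is the intersection of the annihilators of the cyclic submodules $Rt$, and by Noetherianity finitely many of these already achieve the intersection. The sum of those finitely many cyclic submodules is then a finitely generated submodule with annihilator exactly $\operatorname{Ann}(T)$. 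This is the step I expect to need care, and I would cross-check it against whatever convention Appendix B fixes.

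\textbf{Equivalence of the three conditions.}
\begin{itemize}
\item \emph{(1)$\Rightarrow$(2).} If $E$ has finite length, it is finite by the Mobility Lemma \ref{lem: mobile}. Then $E^{\#}$ is finite as well, and so has finite length.
\item \emph{(2)$\Rightarrow$(3).} This is immediate, since a finite-length module is finitely generated.
\item \emph{(3)$\Rightarrow$(1).} Suppose $T$ is finitely generated. Consider the nonzero finitely generated module $E$, and suppose for contradiction that it has positive Krull dimension. Then some prime $\mathfrak p\in\operatorname{Supp}E$ is not maximal.
\end{itemize}
To finish the last case, I would use the dimension equality just proved: $T$ is finitely generated of positive dimension. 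Finitely generated modules satisfy ACC on submodules (they are Noetherian). On the other hand, $T=E^{\#}$ turns the DCC on submodules of $E$ into ACC on submodules of $T$, and conversely; precisely, submodules of $T$ correspond to quotients of $E$. So $T$ Noetherian means $E$ is Artinian, because its submodules correspond to quotients of $E^{\#}$. The correspondence is $Y\mapsto (E/Y)^{\#}\subset E^{\#}$, which is injective by exactness and faithfulness of $(-)^{\#}$. A finitely generated Artinian module has finite length, so (1) holds. This Noetherian/Artinian exchange also gives (3)$\Rightarrow$(1) directly, without invoking dimensions.
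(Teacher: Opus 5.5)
Your proof of the equivalence of (1)--(3) is correct. Exchanging the Artinian and Noetherian conditions through the injective, inclusion-reversing map $Y\mapsto (E/Y)^{\#}\subset E^{\#}$ is a clean alternative to the paper's appeal to Lemma~\ref{lem:dual_dim}, and it does not depend on the dimension statement.

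The gap is in the equality of Krull dimensions. You adopt the same convention as the paper, which deduces equal dimensions from equal \emph{supports} in Lemma~\ref{lem:Krull_equality}. Under that convention, your annihilator computation only gives $\dim T\le\dim E$. The reverse inequality rests on the claim that ``by Noetherianity finitely many of these already achieve the intersection,'' and this is false. Noetherianity is an ascending chain condition, whereas stabilization of an intersection of ideals is a descending one.

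\begin{itemize}
\item The module $\bigoplus_p\mathbb Z/p$ has annihilator $\bigcap_p p\mathbb Z=0$, although all its finitely generated submodules are finite.
\item Inside $\widehat R$ itself, the series killed by $x_1^D-1$ for some $D\ge1$ form a submodule with annihilator $\bigcap_D(x_1^D-1)=0$. Yet all of its finitely generated submodules have dimension at most $d-1$.
\end{itemize}

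So for non-finitely generated modules the two conventions really differ, and $\operatorname{Ann}(T)=\overline{\operatorname{Ann}(E)}$ alone does not yield $\dim T\ge\dim E$.

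The missing input is a property of the specific injective module $\widehat R$, namely $\operatorname{Ass}(\widehat R)=\operatorname{Spec}(R)$. Your finiteness claim, applied to $E=R/\mathfrak p$, is exactly the statement $\overline{\mathfrak p}\in\operatorname{Ass}(\widehat R)$, so it cannot follow from Noetherianity alone.

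Given this fact, the repair is short:
\begin{itemize}
\item Take a minimal prime $\mathfrak p$ of $\operatorname{Ann}(E)$ with $\dim R/\mathfrak p=\dim E$. It is associated to $E$, so $R/\mathfrak p\hookrightarrow E$.
\item By exactness of $(-)^{\#}$, $E^{\#}\twoheadrightarrow(R/\mathfrak p)^{\#}=\{f\in\widehat R:\overline{\mathfrak p}f=0\}$.
\item This target contains an element with annihilator $\overline{\mathfrak p}$. Since localization is exact, $\overline{\mathfrak p}\in\operatorname{Supp}(E^{\#})$, and hence $\dim T\ge\dim E$.
\end{itemize}

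The paper proves $\operatorname{Ass}(\widehat R)=\operatorname{Spec}(R)$ in Lemma~\ref{lem:Rhat_injective_decomp}. It does so by showing that every Bass number $\mu(\mathfrak p)=\dim_{\kappa(\mathfrak p)}\Hom_k(\kappa(\mathfrak p),k)$ is nonzero. It then uses $\operatorname{Supp}\Hom_R(E,I)=\operatorname{Supp}(E)\cap\operatorname{Ass}(I)$ for injective $I$. Incidentally, your twisted formula $\operatorname{Ann}(E^{\#})=\overline{\operatorname{Ann}(E)}$ is more accurate than the paper's ``same annihilators''.
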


\begin{proof}
    Follows from Proposition \ref{prop:eval} and the discussion in Appendix~\ref{sec:dual_dim}. 
\end{proof}

\begin{prop}
We have $\widetilde \Ext^p(\overline M,R) = 0$ and $\widetilde \Tor_p(\widehat R,M) =0$ for $p \geq d$, and the following upper bound on the Krull dimensions:
\begin{equation}
\dim \widetilde \Tor_p(\widehat R,M)=    \dim \widetilde \Ext^p(\overline M,R) \leq d-p-1.
\end{equation}
\end{prop}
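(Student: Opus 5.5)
Since the Krull dimensions of $\widetilde \Tor_p(\widehat R,M)$ and $\widetilde \Ext^p(\overline M,R)$ agree by Corollary~\ref{cor:equal_Krull}, and both vanish together by the duality of Proposition~\ref{prop:eval}, I will work only with $\widetilde \Ext^p(\overline M,R)$. For $p\ge 1$ this is the ordinary $\Ext^p_R(\overline M,R)$. For $p=0$ it is $\coker(\lambda_N)$, which is torsion by the argument quoted in the proof of Lemma~\ref{lem:surjectivity}. A finitely generated torsion module over the $d$-dimensional domain-like ring $R$ has dimension at most $d-1$ (its annihilator contains a regular element). This gives the bound $\dim \le d-0-1$ in degree $0$.

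For $p\ge 1$ I will use the standard grade bound $\dim \Ext^p_R(X,R)\le \dim R - p$ for finitely generated $X$. Since $k=\mathbb Z/n$ is not a field, I first reduce: $k$ is a finite product of rings $\mathbb Z/p^e$, and $R$ splits accordingly, so it suffices to treat $k=\mathbb Z/p^e$. Then $R$ is a Gorenstein ring of Krull dimension $d$, being a localization of a polynomial ring over the zero-dimensional Gorenstein ring $\mathbb Z/p^e$. Localizing at a prime $\mathfrak p$, local duality or the injective dimension bound $\operatorname{injdim} R_{\mathfrak p}=\dim R_{\mathfrak p}=\operatorname{ht}\mathfrak p$ gives $\Ext^p(\overline M,R)_{\mathfrak p}=0$ whenever $p>\operatorname{ht}\mathfrak p$. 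Hence every prime in the support has height $\ge p$, so $\dim \Ext^p\le d-p$, and in particular $\Ext^p=0$ for $p>d$.

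To get the sharper bound $d-p-1$ I need one more input. The key extra fact is that $M$ is torsion-free and embeds into a free module $F$ (as used in Lemma~\ref{lem:surjectivity}). This gives $\Ext^p(\overline M,R)\cong \Ext^{p+1}(\overline{F/M},R)$ for $p\ge1$, and applying the previous bound to $F/M$ yields $\dim\le d-p-1$. In particular $\Ext^d(\overline M,R)\cong\Ext^{d+1}(\overline{F/M},R)=0$. So $\widetilde\Ext^p=0$ for $p\ge d$, and the same holds for $\widetilde\Tor_p$ by Proposition~\ref{prop:eval}. As a cross-check on the Tor side, $\Tor_p(\widehat R,M)=0$ for $p>d$ since $T_\bullet$ is a length-$d$ flat resolution (Theorem~\ref{thm:cellular}). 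In degree $d$ it is computed as the kernel of $R\otimes M\to T_{d-1}\otimes M=\prod_\rho \mathcal M(\rho)$, and this kernel vanishes because $M\subset F$ and $R\to\mathcal R(\rho)$ is injective with flat target.

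I expect the main obstacle to be justifying the grade/Gorenstein bound over $\mathbb Z/p^e$ rather than over a field, where textbook statements are usually made. My first attempt will be to invoke that $R_{\mathfrak p}$ is Gorenstein of dimension $\operatorname{ht}\mathfrak p$, so $\Ext^i_{R_{\mathfrak p}}(-,R_{\mathfrak p})$ vanishes for $i>\operatorname{ht}\mathfrak p$. This is exactly the statement needed, since $\dim \Ext^p\le d-\min\operatorname{ht}(\operatorname{supp})$ and $R$ is catenary and equidimensional, with $\dim R/\mathfrak p=d-\operatorname{ht}\mathfrak p$.
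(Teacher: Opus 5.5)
Your proposal is correct and follows the paper's outline. You reduce to $\widetilde\Ext$ via Corollary~\ref{cor:equal_Krull}, use that $\coker(\lambda_N)$ is torsion for $p=0$, and for $p>0$ use the standard Gorenstein bound $\dim \Ext^i_R(X,R)\le d-i$, which the paper takes from Bruns--Herzog, Corollary~3.5.11(c), through its earlier work. The step the paper leaves to that reference is the dimension shift $\Ext^p(\overline M,R)\cong\Ext^{p+1}(\overline{F/M},R)$ coming from the embedding of $M$ into a free module $F$. This shift upgrades $d-p$ to $d-p-1$ and forces vanishing at $p=d$, and you supply it correctly.
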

\begin{proof}
    By Corollary \ref{cor:equal_Krull}, it is enough to consider the $\widetilde \Ext$ modules. The case \(p>0\) is proved in \cite{ruba2024homological} using the standard dimension bound for Ext modules; see \cite[Corollary~3.5.11(c)]{Bruns_Herzog}. For $p=0$, we note that $\widetilde \Ext^0(\overline M,R)$ is a torsion module, so its Krull dimension is less than $d$. 
\end{proof}

\subsection{Pairings from cup products}

We will now define a pairing of the $\widetilde \Tor$ modules using the cellular flat resolution. Let us begin by recalling that $\Tor_\bullet(\widehat R,M)$ can be computed from the complex
\begin{equation}
    0 \longrightarrow M \longrightarrow C^0(\Sigma;\mathcal M) \longrightarrow \dots \longrightarrow C^{d-1}(\Sigma;\mathcal M) \longrightarrow  0.
\end{equation}
Moreover, we have the surjection $\int \colon C^{d-1}(\Sigma;\mathcal M) \longrightarrow \widehat M$ whose kernel is the module of coboundaries in $C^{d-1}(\Sigma;\mathcal M)$. 

\begin{defn} \label{def:smile_Omega_product}
Let $(M,N,\Omega)$ be a duality pair. We define the sesquilinear pairing
\begin{equation}
 C^p(\Sigma; \mathcal M) \times C^q(\Sigma;\mathcal N) \longrightarrow C^{p+q} (\Sigma;\mathcal R), \qquad (\alpha,\beta) \mapsto a^* \alpha \smile^\Omega \beta,
 \label{eq:Omega_cup}
 \end{equation} 
by first specifying it on simple tensors. If $\alpha = f \otimes m$ and $\beta = g \otimes n$, with
\begin{equation}
    f \in C^p(\Sigma;\mathcal R), \qquad g \in C^q(\Sigma;\mathcal R), \qquad m \in M, \qquad n \in N,
\end{equation}
we set (cf.\ Definition \ref{def:cup} and \eqref{eq:cochain_antipode}):
\begin{equation}
    a^* \alpha \smile^\Omega \beta = \Omega(m,n) \, a^* f \smile g.
\end{equation}

Equivalently, this product may be described component-wise. Let $\sigma\in\Sigma_{p+q}$ be spanned by rays $\rho_0<\cdots<\rho_{p+q}$, and let $-\sigma_{\leq p}$ and $\sigma_{\geq p}$ be the faces of $\sigma$ spanned by $-\rho_0,\ldots,-\rho_p$ and $\rho_p,\ldots,\rho_{p+q}$, respectively. The $\sigma$-component of $a^* \alpha \smile^\Omega \beta$ is defined by
\begin{equation}
    (a^* \alpha \smile^\Omega \beta)_\sigma = \Omega(\alpha_{- \sigma_{\leq p}}, \beta_{\sigma_{\geq p}}) ,
\end{equation}
using the natural extension of $\Omega$ to a pairing $\mathcal M(- \sigma_{\leq p}) \times \mathcal N(\sigma_{\geq p}) \to \mathcal R(\sigma)$. 
\end{defn}

\begin{defn} \label{def:smile_Omega_pairing}
We introduce sesquilinear pairings
\begin{equation} 
C^{d-p-1}(\Sigma;\mathcal M) \times C^{p}(\Sigma;\mathcal N) \longrightarrow \widehat R, \qquad (\alpha,\beta) \mapsto \int a^* \alpha \smile^\Omega \beta
\label{eq:cup_pairing}
\end{equation}
using the product \eqref{eq:Omega_cup} and the augmentation map $\int$, cf.\ \eqref{eq:integral}.
\end{defn}

The following proposition is a step towards proving Theorem \ref{thm:intro-poincare}.

\begin{prop} \label{prop:cup_pairing}
\begin{enumerate}[(a)]
    \item The map \eqref{eq:cup_pairing} descends to a sesquilinear pairing
    \begin{equation}
    \operatorname{Br}_p \colon \widetilde \Tor_{p}(\widehat R,M) \times \widetilde \Tor_{d-p-1} (\widehat R,N) \longrightarrow \widehat R.
    \label{eq:Tor_Tor_pairing}
    \end{equation}
    \item If every $\widetilde \Ext^\bullet(\overline M,R)$ module has finite length, then $\operatorname{Br}_p$ are perfect pairings: they induce isomorphisms
    \begin{equation}
        \widetilde \Tor_{d-p-1}(\widehat R,N) \cong \widetilde \Tor_p(\widehat R,M)^{\#}. 
    \end{equation}
\end{enumerate}
\end{prop}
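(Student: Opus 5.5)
The plan is to compute everything from the cellular complexes $C^\bullet(\Sigma;\mathcal M)$ and $C^\bullet(\Sigma;\mathcal N)$. Under $T_p\otimes M = C^{d-p-1}(\Sigma;\mathcal M)$ for $p<d$, the class in $\widetilde\Tor_p(\widehat R,M)$ for $0<p<d$ is a cocycle $\alpha\in C^{d-p-1}(\Sigma;\mathcal M)$. In degree $d-1$ it is taken modulo $\Delta(M)$ as well as coboundaries. In degree $0$ it is an element $\alpha\in C^{d-1}(\Sigma;\mathcal M)$, modulo coboundaries, with $\widetilde\lambda_M(\int\alpha)=0$, using Corollary~\ref{cor:TM_homology} and the surjection $\int$ whose kernel is the coboundaries.

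\textbf{Part (a).} The basic tool is the Leibniz rule for the twisted product of Definition~\ref{def:smile_Omega_product},
\begin{equation*}
\delta(a^*\gamma\smile^\Omega\beta)=a^*\delta\gamma\smile^\Omega\beta\pm a^*\gamma\smile^\Omega\delta\beta .
\end{equation*}
It follows from the DGA structure of $C^\bullet(\Sigma;\mathcal R)$, from Lemma~\ref{lem:a-naturality-cup} (the action $a^*$ commutes with $\delta$ and $\smile$), and from sesquilinearity of $\Omega$ on simple tensors. Since $\int$ kills coboundaries by exactness at $C^{d-1}$, replacing $\alpha$ by $\alpha+\delta\gamma$ with $\beta$ a cocycle does not change $\int a^*\alpha\smile^\Omega\beta$, and symmetrically in $\beta$.

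The remaining issue is the augmentations. If $\beta=\Delta(n)$ lies in degree $0$, then $a^*\alpha\smile^\Omega\Delta(n)$ has components $\Omega(\alpha_{-\sigma},n)$, so $\int a^*\alpha\smile^\Omega\Delta n$ is, up to the orientation sign of $a$ on $[S^{d-1}]$ and conjugation via \eqref{eq:push_pull_conjugate}, equal to $\Omega(\int\alpha,n)$. This vanishes for all $n$ exactly because $\widetilde\lambda_M(\int\alpha)=0$, which is where the modification of $\Tor_0$ enters. The case $\alpha=\Delta(m)$ is handled the same way using $\widetilde\lambda_N$. For $d=1$ both modules are $\widetilde\Tor_0$ and both arguments are needed simultaneously, but the computation is identical. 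Sesquilinearity is immediate.

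\textbf{Part (b): construction of $\kappa_p$.} Take a free resolution $F_\bullet\to M$ and the augmented complex $K^\bullet$ of \eqref{eq:augmented_Hom_complex}: $N\to F_0^*\to F_1^*\to\cdots$, whose cohomology is $\widetilde\Ext^\bullet(\overline M,R)$. Form the double complex $T_\bullet\otimes K^\bullet$ and run both spectral sequences (Appendix A).
\begin{itemize}
\item Filtering by the $K$-degree, each column is exact except at $T_d=R$ and at $T_0$. Using Lemma~\ref{lem:star_hat_sharp} on $T_\bullet\otimes F_i^*$, the free columns contribute the complex $F_\bullet^{\#}$ in the $T_0$ row. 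The $N$ column contributes $T_\bullet\otimes N$, computing $\Tor_\bullet(\widehat R,N)$.
\item Filtering by the $T$-degree, row $q$ is $T_q\otimes K^\bullet$. Since $T_q$ is flat, its cohomology is $T_q\otimes\widetilde\Ext^\bullet$, and this vanishes for $q\neq d$ by Lemma~\ref{lem:tensor_kills} under the finite length hypothesis. The total complex is therefore quasi-isomorphic to $K^\bullet$, shifted by $d$.
\end{itemize}
Comparing the two, and identifying $F_\bullet^{\#}$ with the augmented tensor complex via Lemma~\ref{lem:surjectivity} and Proposition~\ref{prop:eval}, the zigzag yields $\kappa_p\colon\widetilde\Tor_p(\widehat R,N)\to\widetilde\Ext^{d-p-1}(\overline M,R)$ as an isomorphism. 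The degree shift is chosen so the $N$-augmentation matches the modified degree $0$.

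\textbf{Part (b): perfectness and the main obstacle.} It remains to show $\operatorname{Br}_p(u,v)=\pm\operatorname{ev}(u,\kappa v)$. Perfectness then follows from Proposition~\ref{prop:eval} together with Corollary~\ref{cor:equal_Krull}, since everything is finite and $(-)^{\#}$ is a perfect duality on finite modules. I expect this compatibility to be the main obstacle.

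To prove it, I would lift a cocycle $\beta$ representing $v$ through the staircase of the double complex. I would pair each stage with a representative of $u$ lifted to $T_\bullet\otimes F_\bullet$, using the cup product against $T_\bullet$ and evaluation against $F_\bullet$. The Leibniz rule then shows that the combined pairing is invariant along the zigzag. At the $T_d=R$ end it is the evaluation pairing, and at the $N$ end it is $\int a^*\alpha\smile^\Omega\beta$. The care needed is in signs and in the conjugation introduced by $a^*$.
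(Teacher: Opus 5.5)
Your proposal is correct and is essentially the paper's argument. Part (a) is the same Leibniz-rule argument plus the augmentation check. In part (b) you likewise build $\kappa$ from a double complex and use Lemma~\ref{lem:tensor_kills} to make it an isomorphism. You then prove $\operatorname{Br}_p=\pm\operatorname{ev}_p\circ(\mathrm{id}\times\kappa_{d-p-1})$ by a staircase lift with repeated integration by parts, before concluding with Proposition~\ref{prop:eval}.

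The only variation is the choice of double complex. You tensor $T_\bullet$ with the augmented Hom complex $N\to F_\bullet^*$ and compare the two filtrations, whereas the paper resolves $N$ as well and tensors $\mathbb F_\Omega$ with the doubly augmented cellular complex, so that the columns are exact and the edge maps of Appendix~\ref{app:double_complex} apply. Your version skips the balancing step for $\beta$. Strictly speaking, your $N$-column is not exact in the middle degrees: its homology is all of $\Tor_\bullet(\widehat R,N)$, which is what you actually use.
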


\begin{proof}[Proof of Proposition~\ref{prop:cup_pairing} (a)]
In this proof, we will denote the cohomology of $C^\bullet(\Sigma;\mathcal M)$ and $C^\bullet(\Sigma;\mathcal N)$ by $H^\bullet(\Sigma;\mathcal M)$ and $H^\bullet(\Sigma;\mathcal N)$.

The product $\smile^\Omega$ satisfies the Leibniz rule for the differential $\delta$, so $\eqref{eq:cup_pairing}$ descends to a pairing
\begin{equation}
    H^{d-p-1}(\Sigma;\mathcal M) \times H^p(\Sigma;\mathcal N) \longrightarrow \widehat R. \label{eq:cellular_cohomology_pairing}
\end{equation}
If $p \not \in \{ 0 , d-1 \}$, we have the isomorphisms
\begin{subequations}
    \begin{align}
        H^{d-p-1}(\Sigma;\mathcal M) & \cong \widetilde \Tor_{p}(\widehat R, M), \\ 
        H^{p}(\Sigma;\mathcal N) & \cong \widetilde \Tor_{d-p-1}(\widehat R, N),
    \end{align}
\end{subequations}
which gives $\operatorname{Br}_p$. 

Let us now assume that $d \neq 1$, and take $p=d-1$ (by~the symmetry between $M$ and $N$, $p=0$ can be handled the same way). Then \eqref{eq:cellular_cohomology_pairing} reduces to
\begin{equation}
   H^0(\Sigma;\mathcal M) \times \widehat N \longrightarrow \widehat R.
   \label{eq:H0_widehatN}
\end{equation}
We will now show that this pairing vanishes if the first argument is in $\operatorname{im}(\Delta)$ and the second argument is in $\widetilde \Tor_0(\widehat R,N) \subset \widehat N$. We consider \eqref{eq:cup_pairing} with $p=d-1$, $\alpha = \Delta(m)$ for some $m \in M$, and $\beta $ satisfying $\Omega(\cdot,\int \beta)=0$. Following Definition \ref{def:smile_Omega_product}, we evaluate
\begin{equation}
    \int a^* \alpha \smile^\Omega \beta = \Omega \left(m , \int \beta \right)=0.
\end{equation}
Since $H^0(\Sigma;\mathcal M) / \operatorname{im}(\Delta) \cong \Tor_{d-1}(\widehat R, M) = \widetilde \Tor_{d-1}(\widehat R, M)$, we conclude that \eqref{eq:H0_widehatN} restricts to a~pairing
\begin{equation}
    \widetilde \Tor_{d-1}(\widehat R,M) \times \widetilde \Tor_0(\widehat R,N) \longrightarrow \widehat R.
\end{equation}

One checks that with small adjustments the above argument applies also in the case $d=1$.
\end{proof}

We postpone the proof of Proposition~\ref{prop:cup_pairing} (b) to the end of Section \ref{sec:double_complex}.

\subsection{Poincar\'e duality} \label{sec:double_complex}

In this section, we complete the proof of Theorem \ref{thm:intro-poincare}.

Let $F_\bullet \xrightarrow{\sigma} M$ and $G_\bullet \xrightarrow{\tau} N$ be resolutions by finite free modules. We~apply the functor $({-})^*$ to $F_\bullet$ and join the two complexes:
\begin{equation}
 \mathbb F_\Omega\colon \quad   \dots \longrightarrow G_2 \longrightarrow G_1 \longrightarrow G_0 \xrightarrow{\sigma^* \lambda_N \tau} F_0^* \longrightarrow F_1^* \longrightarrow F_2^* \longrightarrow \dots
    \label{eq:double_complex}
\end{equation}
using the map $G_0 \longrightarrow F_0^*$ given as the composition 
\begin{equation}
    G_0 \xrightarrow{\tau} N \xrightarrow{\lambda_N} M^* \xrightarrow{\sigma^*} F_0^*. 
\end{equation}
Furthermore, let us tensor the complex $\mathbb F_\Omega$ in \eqref{eq:double_complex} with the exact sequence \eqref{eq:augmented_exact_complex}, thereby obtaining the following double complex:
\begin{equation}
\begin{tikzcd}[row sep=1.5em, column sep=1.5em, font=\footnotesize]
    & 0 \arrow[d] & 0 \arrow[d] & 0 \arrow[d] & 0 \arrow[d] & \\
    \cdots \arrow[r] 
    & G_1 \arrow[r] \arrow[d, "\Delta"] 
    & G_0 \arrow[r] \arrow[d, "\Delta"] 
    & F_0^* \arrow[r] \arrow[d, "\Delta"] 
    & F_1^* \arrow[r] \arrow[d, "\Delta"] 
    & \cdots \\
    \cdots \arrow[r] 
    & T_{d-1} \otimes G_1 \arrow[r] \arrow[d, "\delta^0"] 
    & T_{d-1} \otimes G_0 \arrow[r] \arrow[d, "\delta^0"] 
    & T_{d-1} \otimes F_0^* \arrow[r] \arrow[d, "\delta^0"] 
    & T_{d-1} \otimes F_1^* \arrow[r] \arrow[d, "\delta^0"] 
    & \cdots \\
    & \vdots \arrow[d, "\delta^{d-2}"] 
    & \vdots \arrow[d, "\delta^{d-2}"] 
    & \vdots \arrow[d, "\delta^{d-2}"] 
    & \vdots \arrow[d, "\delta^{d-2}"] 
    & \\
    \cdots \arrow[r] 
    & T_0 \otimes G_1 \arrow[r] \arrow[d, "\int"] 
    & T_0 \otimes G_0 \arrow[r] \arrow[d, "\int"] 
    & T_0 \otimes F_0^* \arrow[r] \arrow[d, "\int"] 
    & T_0 \otimes F_1^* \arrow[r] \arrow[d, "\int"] 
    & \cdots \\
    \cdots \arrow[r] 
    & \widehat G_1 \arrow[r] \arrow[d] 
    & \widehat G_0 \arrow[r] \arrow[d] 
    &  F_0^{\#} \arrow[r] \arrow[d] 
    &  F_1^{\#} \arrow[r] \arrow[d] 
    & \cdots \\
    & 0 & 0 & 0 & 0 &
\end{tikzcd},
\label{eq:tensor_double_complex}
\end{equation}
where in the bottom row we identified $\widehat R \otimes F_i^* = F_i^{\#}$, cf.\ Lemma \ref{lem:star_hat_sharp}.

\begin{lem} \label{lem:double_complex_rows_and_cols}
    The double complex \eqref{eq:tensor_double_complex} has exact columns. The horizontal homology of its nonzero rows is given as follows.
    \begin{enumerate}[(a)]
        \item The top row computes $\widetilde \Ext^\bullet(\overline M,R)$ in the columns corresponding to~$F_\bullet^*$, and is exact in the $G_\bullet$ columns.
        \item The intermediate rows (tensored with $T_p$ with $0 \leq p \leq d-1$) compute $T_p \otimes \widetilde \Ext^\bullet(\overline M,R)$, also residing in the $F_\bullet^*$ columns.
        \item The bottom row computes $\widetilde \Tor_\bullet(\widehat R,N)$ in the $G_\bullet$ columns, and is exact in the $F_\bullet^*$ columns.
    \end{enumerate}
\end{lem}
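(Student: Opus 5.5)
Columns first. Each column of \eqref{eq:tensor_double_complex} is the augmented complex \eqref{eq:augmented_exact_complex} tensored with a finitely generated free module ($G_i$ or $F_i^*$). Tensoring with a free module preserves exactness, so Theorem~\ref{thm:cellular} shows that every column is exact. The bottom entries are correct: in the $G_\bullet$ columns the last map is $\int\otimes\mathrm{id}$ onto $\widehat G_i$, and in the $F_\bullet^*$ columns it lands in $\widehat R\otimes F_i^*$, which we identify with $F_i^{\#}$ by Lemma~\ref{lem:star_hat_sharp}.

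(a) The top row is the complex $\mathbb F_\Omega$. In the $G_\bullet$ columns we have the resolution $G_\bullet\to N$, which is exact except at $G_0$, where its homology is $N$. The map $G_0\to F_0^*$ factors as $\sigma^*\lambda_N\tau$, with $\tau$ surjective and $\sigma^*\lambda_N$ injective. Hence the kernel of $G_0\to F_0^*$ is exactly $\ker\tau=\operatorname{im}(G_1\to G_0)$, and the row is exact at every $G$-column. At $F_0^*$ the image of the incoming map equals $\operatorname{im}(\sigma^*\lambda_N)$, the same as the image of $N$ in \eqref{eq:augmented_Hom_complex}. The $F^*_\bullet$ part of $\mathbb F_\Omega$ therefore has cohomology $\widetilde\Ext^\bullet(\overline M,R)$ by Definition~\ref{def:aug_Hom}.

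(b) Each intermediate row is $T_p\otimes\mathbb F_\Omega$ for $0\le p\le d-1$. The module $T_p$ is flat by Theorem~\ref{thm:cellular}, so the functor $T_p\otimes(-)$ is exact and commutes with homology. The horizontal homology is then $T_p$ tensored with the homology from (a), namely $T_p\otimes\widetilde\Ext^\bullet(\overline M,R)$ in the $F^*_\bullet$ columns and zero in the $G_\bullet$ columns.

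(c) The bottom row is $\widehat R\otimes\mathbb F_\Omega$ under the identification $\widehat R\otimes F_i^*\cong F_i^{\#}$. The $F^*$ part is then $F_\bullet^{\#}=\Hom(\overline F_\bullet,\widehat R)$. Since $\widehat R$ is injective, this complex is exact in positive degrees, and at $F_0^{\#}$ its cohomology is $\ker(F_0^\#\to F_1^\#)=M^{\#}$, embedded via $\sigma^\#$. We need two facts:
\begin{itemize}
\item the composite $\widehat G_0\to F_0^\#$ equals $\sigma^\#\circ\widetilde\lambda_N\circ\widehat\tau$, where $\widetilde\lambda_N\colon\widehat N\to M^\#$;
\item the image of $\widehat G_0$ fills all of $\ker(F_0^\#\to F_1^\#)$.
\end{itemize}
The first is a naturality check of the evaluation identification. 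The second holds because $\widehat\tau$ is surjective (right exactness) and $\widetilde\lambda_N$ is surjective by Lemma~\ref{lem:surjectivity}. This gives exactness at $F_0^\#$.

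It remains to treat the $G$ part, which is the complex $\widehat G_\bullet$ augmented by $\widehat G_0\to M^{\#}$, with $M^\#$ embedded injectively in $F_0^\#$. By Definition~\ref{def:aug_tensor}, with the roles of $M$ and $N$ exchanged, its homology is $\widetilde\Tor_\bullet(\widehat R,N)$. At $\widehat G_0$ in particular the homology is $\ker(\widetilde\lambda_N\widehat\tau)/\operatorname{im}\widehat G_1$, which equals $\ker(\widetilde\lambda_N)\subset\widehat N$. The main point to check carefully is the compatibility of the bottom-row identification with the augmentation maps, i.e.\ that the map $\widehat G_0\to F_0^\#$ really coincides with $\sigma^\#\widetilde\lambda_N\widehat\tau$. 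I expect this to be a direct unwinding of the definitions, using that $\Omega$ extended to $M\times\widehat N$ is computed by $\widehat{\lambda_N}$.
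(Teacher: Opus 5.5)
Your proof is correct and follows essentially the same route as the paper. The paper packages parts (a) and (c) as explicit comparison quasi-isomorphisms between the rows of the double complex and the augmented complexes of Definitions~\ref{def:aug_Hom} and~\ref{def:aug_tensor} (via $\tau$ and $\sigma^\#$), whereas you compute the horizontal homology directly from the same ingredients: injectivity of $\sigma^*\lambda_N$, surjectivity of $\tau$, $\widehat\tau$ and $\widetilde\lambda_N$, exactness of $(-)^\#$, and flatness of $T_p$; your explicit column-exactness argument, which the paper leaves implicit, is also fine.
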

\begin{proof}
(a) We have the commutative diagram with exact rows
\begin{equation}
\begin{tikzcd}
    \dots \arrow[r] & G_1 \arrow[r] \arrow[d] & G_0 \arrow[r, "\sigma^* \lambda_N \tau"] \arrow[d, "\tau"] & F_0^* \arrow[r] \arrow[d, "\mathrm{id}"] & F_1^* \arrow[r] \arrow[d, "\mathrm{id}"] & \dots \\
    \dots \arrow[r] & 0 \arrow[r]             & N \arrow[r, "\sigma^* \lambda_N"]                                                & F_0^* \arrow[r]                          & F_1^* \arrow[r]                          & \dots
\end{tikzcd}
\end{equation}
whose vertical arrows provide a quasi-isomorphism from the top row of the double complex \eqref{eq:tensor_double_complex} to the augmented Hom complex \eqref{eq:augmented_Hom_complex}.

(b) follows from (a) because $T_p$ are flat modules.

(c) As in the proof of (a), we have the commutative diagram with exact rows
\begin{equation}
\begin{tikzcd}
    \dots \arrow[r] & \widehat{G}_1 \arrow[r]  & \widehat{G}_0 \arrow[r, "\sigma^\# \widetilde\lambda_N \widehat{\tau}"]  & F_0^\# \arrow[r]  & F_1^\# \arrow[r]  & \dots \\
    \dots \arrow[r] & \widehat{G}_1 \arrow[r] \arrow[u, "\mathrm{id}"]                                   & \widehat{G}_0 \arrow[r, "\widetilde \lambda_N \widehat{\tau}"]  \arrow[u, "\mathrm{id}"]                                             & M^\# \arrow[r] \arrow[u, "\sigma^\#"]                                           & 0 \arrow[r]   \arrow[u]                   & \dots
\end{tikzcd}
\end{equation}
whose vertical arrows provide a quasi-isomorphism from the augmented tensor complex \eqref{eq:augmented_tensor_complex} to the bottom row of the double complex \eqref{eq:tensor_double_complex}.
\end{proof}

\begin{prop}\label{prop:kappa_map}
\begin{enumerate}[(a)]
    \item There exist canonical homomorphisms
\begin{equation}
\kappa_p \colon \widetilde \Tor_p(\widehat R,N) \longrightarrow \widetilde \Ext^{d-p-1}(\overline M,R).
\label{eq:Tor_Ext_maps}
\end{equation}
If every $\widetilde \Ext^q(\overline M,R)$ has finite length, then the maps $\kappa_p$ are isomorphisms.
\item The pairing $\operatorname{Br}_p$, defined in Proposition~\ref{prop:cup_pairing}, can be expressed as
\begin{equation}
 \operatorname{Br}_p =c_{d,p} \, \operatorname{ev}_p \circ (\operatorname{id} \times \kappa_{d-p-1}),
 \label{eq:Br_factorization}
\end{equation}
where $\operatorname{ev}_p$ is the pairing defined in Proposition \ref{prop:eval}, and $c_{d,p} \in \{ \pm 1 \}$ depends only on $d,p$. 
\end{enumerate}
\end{prop}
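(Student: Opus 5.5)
The plan is to obtain $\kappa_p$ as the zig-zag connecting map across the double complex \eqref{eq:tensor_double_complex}. Lemma~\ref{lem:double_complex_rows_and_cols} will supply exact columns and controlled row homology. The factorization \eqref{eq:Br_factorization} will then come from a chain-level pairing that is compatible with the zig-zag. We index the columns so that $G_p$ sits in position $-p-1$ and $F_q^*$ in position $q$. The rows are numbered from the top: the top row, then the $d$ rows $T_{d-1}\otimes\mathbb F_\Omega,\dots,T_0\otimes\mathbb F_\Omega$, then the bottom row.

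\textbf{Step 1: definition of $\kappa_p$.} By Lemma~\ref{lem:double_complex_rows_and_cols}(c), a class in $\widetilde\Tor_p(\widehat R,N)$ is represented by a horizontal cycle $x\in\widehat G_p$ in the bottom row.
\begin{itemize}
\item Since $\int$ is surjective (exact columns), lift $x$ to $y_0\in T_0\otimes G_p$.
\item The horizontal differential $h y_0$ has vanishing image under $\int$, so column exactness gives $y_1\in T_1\otimes(\text{column }-p)$ with $\delta y_1=h y_0$.
\item Continue in this way through all $d$ intermediate rows, and finish with a lift along $\Delta$ into the top row.
\end{itemize}
This uses $d+1$ vertical lifts and $d$ horizontal steps, so the output lies in column $-p-1+d=d-p-1$, i.e.\ in $F^*_{d-p-1}$, and it is a horizontal cycle there. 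Standard arguments (Appendix~A) show that the class of the output is independent of the choices, because the ambiguity at each stage is a vertical image. This yields a well-defined $\kappa_p\colon\widetilde\Tor_p(\widehat R,N)\to\widetilde\Ext^{d-p-1}(\overline M,R)$. The output always lands in an $F^*$ column, where by Lemma~\ref{lem:double_complex_rows_and_cols}(a) the top-row homology is $\widetilde\Ext$; the exactness of the top row in the $G$ columns is what makes the degrees match up.

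\textbf{Step 2: isomorphism under finite length.} Each $\widetilde\Ext^q(\overline M,R)$ has finite length, so Lemma~\ref{lem:tensor_kills} gives $T_j\otimes\widetilde\Ext^q=0$ for $j<d$. Lemma~\ref{lem:double_complex_rows_and_cols}(b) then shows that all intermediate rows are exact. We are left with a double complex with exact columns whose only row homology sits in the top and bottom rows. The standard staircase argument (equivalently, the row-filtration spectral sequence of the acyclic total complex) shows that the zig-zag map between the bottom and top row homology is an isomorphism. Concretely, surjectivity and injectivity follow by running the zig-zag downward from the top row, using exactness of the intermediate rows to lift horizontally at each stage.

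\textbf{Step 3: the factorization \eqref{eq:Br_factorization}.} Consider a second double complex $T\otimes F_\bullet$, whose cellular rows compute $\Tor(\widehat R,M)$ via $C^\bullet(\Sigma;\mathcal M)$. Define a chain-level pairing between it and \eqref{eq:tensor_double_complex} by
\[
(u\otimes f,\ g\otimes\varphi)\longmapsto \int a^*u\smile \bigl(\widehat\varphi(f)\,g\bigr),
\]
paired in complementary cellular degrees, in the spirit of Definition~\ref{def:smile_Omega_pairing}. On the $G$-columns the pairing is taken through $\tau$ and $\Omega$. The Leibniz rule for $\smile$, together with the identity $\int a^*\Delta(m)\smile^\Omega\beta=\Omega(m,\int\beta)$ already used in the proof of Proposition~\ref{prop:cup_pairing}(a), should make this pairing compatible with both the horizontal and vertical differentials, up to signs depending only on bidegree.

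With this compatibility, pairing a fixed cycle $\alpha$ representing a class of $\widetilde\Tor_{d-p-1}(\widehat R,M)$ against successive terms of the zig-zag gives equal values, up to sign, at each step. At the bottom end the value is the cup pairing, which computes $\operatorname{Br}$. At the top end, after transporting $\alpha$ to $\widehat F_{d-p-1}$, it is $\operatorname{ev}_{d-p-1}$ applied to the $\kappa_p$-image. Note that this produces the factorization with index $\kappa_{d-p-1}$ once the roles of $p$ and $d-p-1$ are matched to the statement. The main obstacle is exactly this step: checking that the chain-level pairing is a map of double complexes, including the augmentation rows and the modified degree-zero terms. Identifying $\alpha$ in $C^\bullet(\Sigma;\mathcal M)$ with its image in $\widehat F_\bullet$ itself requires a second zig-zag in $T\otimes F_\bullet$, and the signs from both zig-zags must be tracked to see that $c_{d,p}$ depends only on $d$ and $p$.

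Finally, part (b) of Proposition~\ref{prop:cup_pairing} follows. By Step 2, $\kappa$ is an isomorphism, and by Proposition~\ref{prop:eval}, $\operatorname{ev}$ is perfect; hence $\operatorname{Br}$ is perfect.
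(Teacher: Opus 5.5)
This is essentially the paper's proof. As there, \(\kappa_p\) is the edge-to-edge map of Proposition~\ref{prop:edge-to-edge} for the double complex \eqref{eq:tensor_double_complex}, and the isomorphism comes from Lemma~\ref{lem:tensor_kills} making the intermediate rows exact. The factorization likewise comes from pairing a zig-zag for \(\alpha\) in \(T\otimes F_\bullet\) against the zig-zag of the other class via \(\smile^{\mathrm{ev}}\), integrating by parts step by step, and finishing with \(\int a^*u\smile^{\mathrm{ev}}\Delta z=(-1)^d\operatorname{ev}(\int u,z)\).

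One correction to your Step~3: the cup pairing \(\int a^*\alpha\smile^\Omega\beta\) does not sit at the bottom end of your zig-zag. It appears at the stage where the zig-zag crosses from the \(G_0\) column into \(F_0^*\). There \(\int a^*\alpha\smile^\Omega\tau(y)=\int a^*\widetilde\alpha\smile^{\mathrm{ev}}(\sigma^*\lambda_N\tau)(y)\) for a lift \(\widetilde\alpha\) of \(\alpha\), so no separate pairing on the \(G\)-columns is needed. The part of the zig-zag below that point serves only, via the balancing-Tor argument, to identify \(\tau(y)\) as the cellular representative of your \(\widehat G_p\) class. This is why the paper starts from the cellular cocycle and runs the zig-zag in both directions.
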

Propositions \ref{prop:cup_pairing} and \ref{prop:kappa_map} combined establish Theorem \ref{thm:intro-poincare}.
\begin{proof}
(a) We define $\kappa_p$ to be the edge-to-edge maps, defined in Proposition \ref{prop:edge-to-edge} in Appendix~\ref{app:double_complex}, associated to the double complex \eqref{eq:tensor_double_complex}. 

If every $\widetilde \Ext^{\bullet}(\overline M,R)$ module has finite length, then by Lemma \ref{lem:tensor_kills} it is killed by tensoring with $T_p$ in Lemma \ref{lem:double_complex_rows_and_cols} (b). Then all the rows of \eqref{eq:tensor_double_complex} except for the top and bottom rows are exact, and $\kappa$ maps are isomorphisms by Proposition \ref{prop:edge-to-edge}.

(b) Consider a class $h \in \widetilde \Tor_{d-p-1}(\widehat R,N)$ represented by $\beta \in C^p (\Sigma;\mathcal N)$ with $0 \leq p \leq d-1$. If~$p \neq d-1$, then $\beta$ is required to satisfy the cocycle condition $\delta \beta =0$. If $p=d-1$, the cocycle condition is replaced by the requirement that $\Omega \left( \cdot ,\int \beta \right) =0$. 

We choose a lift $\beta_p \in C^p(\Sigma;\mathcal R) \otimes G_0$; that is, we require that 
\begin{equation}
(    \operatorname{id}_{C^p(\Sigma;\mathcal R)} \otimes \tau)(\beta_p) = \beta, 
\end{equation}
where $\tau$ is the augmentation map of the resolution $G_\bullet \xrightarrow{\tau} N$. In the rest of the proof, we will abbreviate this and other similar formulas and write $\tau(\beta_p)=\beta$, leaving identity maps on modules $C^p(\Sigma;\mathcal R)$ implicit. 

We can find elements $\beta_{p+1},\dots, \beta_{d-1}$, with $\beta_q \in C^q(\Sigma;\mathcal R) \otimes G_{q-p}$, such that
\begin{equation}
    (-1)^{p-q} d_h \beta_q + \delta \beta_{q-1} =0, \qquad p+1 \leq q \leq d-1.
    \label{eq:double_complex_recursion}
\end{equation}
Here $d_h$ and $\delta$ are the horizontal and vertical differentials of the double complex \eqref{eq:tensor_double_complex}. To verify the existence of the $\beta_q$ satisfying \eqref{eq:double_complex_recursion}, it suffices to note that the rows of \eqref{eq:tensor_double_complex} are exact in the relevant part of the double complex. Comparing this construction with the standard balancing Tor arguments, we see that the cycle $\int \beta_{d-1} \in \widehat G_{d-p-1}$ represents the class $h$, where now we compute the torsion module by resolving~$N$. 

By exactness of columns of \eqref{eq:tensor_double_complex}, we can find $\beta_0,\dots,\beta_{p-1}$, with $\beta_q \in C^q(\Sigma;\mathcal R) \otimes F_{p-q-1}^*$ and $\beta_{-1} \in F_p^*$, such that relations \eqref{eq:double_complex_recursion} hold also for $1 \leq q \leq p$, and in addition
\begin{equation}
    (-1)^p d_h \beta_0 + \Delta \beta_{-1} =0. 
\end{equation}
Then $\beta_{-1}$ is a horizontal cocycle representing the class $\kappa(h) \in \widetilde \Ext^p(\overline M, R)$, cf.~the proof of (a) and Appendix \ref{app:double_complex}. 

We proceed analogously for a class $h' \in \widetilde \Tor_{p}(\widehat R,M)$ represented by a cocycle $\alpha \in C^{d-p-1}(\Sigma;\mathcal M)$ ($\int \alpha$ orthogonal to $N$ if $p=0$). In this case we consider a version of the double complex \eqref{eq:tensor_double_complex} with the roles of $M$ and $N$ (and thus also of $F_\bullet$ and $G_\bullet$) exchanged. Let us note that switching the roles of $M$ and $N$ in \eqref{eq:double_complex} is equivalent, up to a degree shift, to applying the functor $(-)^*$ to the complex. Therefore, we denote the new horizontal differentials by $d_h^*$.

Arguing as for $h$ and $\beta$, we obtain elements $\alpha_{-1},\dots,\alpha_{d-1}$, where:
\begin{equation}
    \alpha_q \in \begin{cases}
        G_{d-p-1}^*, & q=-1, \\
        C^q(\Sigma;\mathcal R) \otimes G_{d-p-q-2}^*, & 0 \leq q \leq d-p-2, \\
        C^q(\Sigma;\mathcal R) \otimes F_{q+p-d+1} , & d-p-1 \leq q \leq d-1,
    \end{cases}
    \label{eq:alpha_recursions}
\end{equation}
and we have the equations:
\begin{subequations}
\begin{align}
    (-1)^{d-p-1} d_h^* \alpha_0 + \Delta \alpha_{-1} & = 0 , \\
    (-1)^{d-p-q-1} d_h^* \alpha_q + \delta \alpha_{q-1}   & = 0 , \qquad  1 \leq q \leq d-1, \label{eq:cocycle_2}\\
    \sigma(\alpha_{d-p-1}) &= \alpha. 
\end{align}
\end{subequations}
Then $\int \alpha_{d-1} \in \widehat F_p$ represents $h'$, and $\alpha_{-1}$ represents $\kappa(h')$. 

Let us now observe that
\begin{equation}
    \int a^* \alpha \smile^\Omega \beta = \int a^* \alpha_{d-1-p} \smile^{\mathrm{ev}} (\sigma^* \lambda_N \tau)(\beta_p),
\end{equation}
where $\smile^{\mathrm{ev}}$ is the product as in Definition \ref{def:smile_Omega_product} for the duality pair $(F_0,F_0^*)$ (see Example \ref{exmp:ev_pair}). The map $\sigma^* \lambda_N \tau$ is one of the differentials in the complex $\mathbb F_\Omega$ in \eqref{eq:double_complex}, so we can use relation \eqref{eq:double_complex_recursion} (with $q=p$) to express the above as
\begin{equation}
       \int a^* \alpha \smile^\Omega \beta = - \int a^* \alpha_{d-p-1} \smile^{\mathrm{ev}} \delta \beta_{p-1}. 
\end{equation}
Next, we integrate by parts and use \eqref{eq:cocycle_2} with $q=d-p$:
\begin{align}
    \int a^* \alpha \smile^\Omega \beta &= (-1)^{d-p-1} \int a^* \delta \alpha_{d-p-1} \smile^{\mathrm{ev}} \beta_{p-1} \\
    & = (-1)^{d-p-1} \int a^* d_h^* \alpha_{d-p-1} \smile^{\mathrm{ev}} \beta_{p-1} \nonumber \\ 
    & = (-1)^{d-p-1} \int a^* \alpha_{d-p-1} \smile^{\mathrm{ev}} d_h \beta_{p-1}, \nonumber 
    \end{align}
    where in the final expression we consider the duality pair $(F_1,F_1^*)$. Repeating this step $p$ times in total, we arrive at the formula
    \begin{align}
         \int a^* \alpha \smile^\Omega \beta &= (-1)^{p(d-p) + \frac{p(p+1)}{2}} \int a^* \alpha_{d-1} \smile^{\mathrm{ev}} d_h \beta_0 \\
         & = (-1)^{p(d-p) + \frac{p(p+1)}{2}+ p+1} \int a^* \alpha_{d-1} \smile^{\mathrm{ev}} \Delta \beta_{-1} \nonumber .
    \end{align} 
    This integral can now be evaluated from definition:
    \begin{align}
        \int a^* \alpha_{d-1} \smile^{\mathrm{ev}} \Delta \beta_{-1} &= \sum_{\sigma \in \Sigma_{d-1}} [S^{d-1}:\sigma] \operatorname{ev}((\alpha_{d-1})_{-\sigma},\beta_{-1}) \\
        & = (-1)^d \operatorname{ev} \left( \int \alpha_{d-1}, \beta_{-1} \right) = (-1)^d \operatorname{ev}(h', \kappa(h)), \nonumber
    \end{align}
    where $(-1)^d$ is present because $[S^{d-1}:-\sigma] = (-1)^d [S^{d-1}:\sigma]$. This establishes \eqref{eq:Br_factorization} with
    \begin{equation}
         c_{d,p}=(-1)^{p(d-p) + \frac{p(p+1)}{2} + p+1 + d}. 
    \end{equation}
\end{proof}

\begin{proof}[Proof of Proposition~\ref{prop:cup_pairing} (b)]
Immediate consequence of Proposition \ref{prop:eval} and Proposition \ref{prop:kappa_map}.
\end{proof}

\begin{cor}
Suppose that every $\widetilde \Ext^p(\overline M,R)$ module has finite length. There exist canonical and perfect sesquilinear pairings
    \begin{equation}
        \widetilde \Ext^{d-p-1}(\overline N,R) \times \widetilde \Ext^{p}(\overline M,R) \longrightarrow \widehat R, 
    \end{equation}
    which induce isomorphisms
    \begin{equation}
        \widetilde \Ext^{d-p-1}(\overline N,R) \cong \widetilde \Ext^{p}(\overline M,R)^{\#}, \quad \widetilde \Ext^{p}(\overline M,R) \cong \widetilde \Ext^{d-p-1}(\overline N,R)^{\#}.
    \end{equation}
\end{cor}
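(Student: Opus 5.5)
The plan is to combine the isomorphisms $\kappa$ of Proposition~\ref{prop:kappa_map}, applied to the \emph{swapped} duality pair, with the evaluation duality of Proposition~\ref{prop:eval}. Let $\Omega'(n,m)=\overline{\Omega(m,n)}$. Then $(N,M,\Omega')$ is again a duality pair, since nondegeneracy is symmetric in the two arguments. Its maps $\lambda$ are those of $(M,N,\Omega)$ with the roles exchanged. Hence the modified modules $\widetilde\Ext^\bullet(\overline N,R)$ and $\widetilde\Tor_\bullet(\widehat R,M)$ computed for $(N,M,\Omega')$ are exactly the ones appearing in the statement and in Definition~\ref{def:aug_tensor}. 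For this pair, Proposition~\ref{prop:kappa_map}(a) provides canonical maps
\begin{equation}
\kappa'_p\colon \widetilde\Tor_p(\widehat R,M)\longrightarrow \widetilde\Ext^{d-p-1}(\overline N,R).
\end{equation}
These are isomorphisms provided every $\widetilde\Ext^q(\overline N,R)$ has finite length.

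The first step is to verify this finiteness hypothesis for $N$, and this is where the only real work lies. By Proposition~\ref{prop:kappa_map}(a) for the original pair, the hypothesis on $M$ gives $\widetilde\Tor_q(\widehat R,N)\cong\widetilde\Ext^{d-q-1}(\overline M,R)$ for all $q$. So every $\widetilde\Tor_q(\widehat R,N)$ has finite length. Corollary~\ref{cor:equal_Krull}, applied to the pair $(N,M,\Omega')$, then shows that every $\widetilde\Ext^q(\overline N,R)$ has finite length. Consequently every $\kappa'_p$ is an isomorphism.

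Next I define the pairing. For $u\in\widetilde\Ext^{d-p-1}(\overline N,R)$ and $v\in\widetilde\Ext^p(\overline M,R)$, set
\begin{equation}
\langle u,v\rangle=\operatorname{ev}_p\bigl((\kappa'_p)^{-1}(u),v\bigr)\in\widehat R,
\end{equation}
where $\operatorname{ev}_p$ is the evaluation pairing of Proposition~\ref{prop:eval}. Sesquilinearity is inherited from $\operatorname{ev}_p$, because $\kappa'_p$ is $R$-linear. Canonicity is inherited from the canonicity of $\kappa'$ and $\operatorname{ev}$. By Proposition~\ref{prop:eval} the adjoint map is the composite isomorphism
\begin{equation}
\widetilde\Ext^{d-p-1}(\overline N,R)\cong\widetilde\Tor_p(\widehat R,M)\cong\widetilde\Ext^p(\overline M,R)^{\#}.
\end{equation}
Equivalently, by Proposition~\ref{prop:kappa_map}(b) for the swapped pair, this pairing agrees up to the sign $c_{d,\cdot}$ with $\operatorname{Br}$ transported along the maps $\kappa$.

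For the other isomorphism, $\widetilde\Ext^p(\overline M,R)\cong\widetilde\Ext^{d-p-1}(\overline N,R)^{\#}$, I apply $(-)^{\#}$ to the first one. This gives
\begin{equation}
\widetilde\Ext^{d-p-1}(\overline N,R)^{\#}\cong\widetilde\Ext^p(\overline M,R)^{\#\#}.
\end{equation}
By the mobility lemma (Lemma~\ref{lem: mobile}), a finite-length module is finite. For a finite $k$-module $A$, the natural map $A\to A^{\#\#}=\Hom_k(\Hom_k(A,k),k)$ is an isomorphism by Pontryagin duality, since $k\cong(\mathbb Q/\mathbb Z)[n]$. It remains to check that this identification is the adjoint of $\langle\cdot,\cdot\rangle$ in the other variable, possibly up to the conjugation coming from sesquilinearity. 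This is a formal bookkeeping step. Thus the main obstacle is only the finiteness transfer from $M$ to $N$, which the chain above via Proposition~\ref{prop:kappa_map}(a) and Corollary~\ref{cor:equal_Krull} handles.
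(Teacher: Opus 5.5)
Your proposal is correct and is essentially the paper's argument. The paper transports $\operatorname{Br}_p$ along $\kappa'_p$ (for the swapped pair) and $\kappa_{d-p-1}$, and the factorization $\operatorname{Br}_p=c_{d,p}\operatorname{ev}_p\circ(\operatorname{id}\times\kappa_{d-p-1})$ shows that this equals your pairing $\operatorname{ev}_p((\kappa'_p)^{-1}u,v)$ up to the sign $c_{d,p}$; note that this uses Proposition~\ref{prop:kappa_map}(b) for the original pair $(M,N,\Omega)$, not the swapped one. Your explicit check that $\widetilde\Ext^\bullet(\overline N,R)$ also has finite length, via $\kappa$ for $(M,N,\Omega)$ and Corollary~\ref{cor:equal_Krull} for $(N,M,\Omega')$, is exactly what makes $\kappa'_p$ an isomorphism, and the paper's one-line proof leaves it implicit.
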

\begin{proof}
We transport pairings $\operatorname{Br}$ from Theorem \ref{prop:cup_pairing} via the isomorphisms $\kappa$ from Theorem \ref{prop:kappa_map}.
\end{proof}

\subsection{Modules with self-pairings}
\label{sec:self-pairing}

In this subsection and the next, we specialize the preceding results to the two classes of duality pairs introduced in Example~\ref{exmp: symmetric}. No new results are proved; rather, we record the resulting self-dualities in a form suited to the quadratic constructions of the next section.

Let $P$ be a finitely generated $R$-module equipped with a nondegenerate $\varepsilon$-hermitian form
\begin{equation}
\Omega \colon P \times P \longrightarrow R,
\qquad
\Omega(p,q)=\varepsilon\,\overline{\Omega(q,p)},
\qquad
\varepsilon\in\{\pm1\}.
\label{eq:Omega_epsilon_hermitian}
\end{equation}
Then $(P,P,\Omega)$ is a duality pair. We write
\begin{equation}
\lambda_P \colon P\longrightarrow P^*,
\qquad
p\longmapsto\Omega(\mathord\cdot,p),
\label{eq:self-duality-map}
\end{equation}
for the induced embedding and
\(
\widetilde\lambda_P \colon\widehat P\longrightarrow P^\#
\)
for its extension, which is surjective by Lemma~\ref{lem:surjectivity}. Thus
\begin{equation}
\widetilde\Ext^0(\overline P,R)=\coker(\lambda_P),
\qquad
\widetilde\Tor_0(\widehat R,P)=\ker(\widetilde\lambda_P),
\label{eq:self-degree-zero-invariants}
\end{equation}
while in positive degrees the modified invariants agree with the usual $\Ext$ and $\Tor$ modules.

     In our earlier work \cite{ruba2024homological}, we called sesquilinear forms $\Omega$ with an alternating scalar part \emph{quasi-symplectic}. We chose this terminology because we viewed the scalar part of $\Omega$ as more fundamental than $\Omega$ itself.

\begin{remark}
For a general sesquilinear form, one also has the map
\begin{equation}
P\longrightarrow P^*,
\qquad
q\longmapsto\overline{\Omega(q,\mathord\cdot)}.
\end{equation}
For the $\varepsilon$-hermitian form~\eqref{eq:Omega_epsilon_hermitian}, this map is $\varepsilon\lambda_P$ and therefore contains no additional information.
\end{remark}

Let $F_\bullet\xrightarrow{\sigma}P$ be a resolution by finitely generated free modules. Using this resolution for both entries of the duality pair, the two-sided complex~\eqref{eq:double_complex} takes the self-dual form
\begin{equation}
\mathbb F_\Omega\colon\quad
\cdots\longrightarrow F_2\longrightarrow F_1\longrightarrow F_0
\xrightarrow{\sigma^*\lambda_P\sigma}
F_0^*\longrightarrow F_1^*\longrightarrow F_2^*\longrightarrow\cdots.
\label{eq:self-two-sided-complex}
\end{equation}
Thus a single resolution supplies both halves of the construction. The double complex defining the edge maps is simply the specialization of~\eqref{eq:tensor_double_complex} obtained by taking $M=N=P$ and $G_\bullet=F_\bullet$.

\begin{cor}[Complementary-degree self-duality]
\label{cor:self-pairing-duality}
The cup-product construction defines, for every $0\leq p\leq d-1$, a canonical sesquilinear pairing
\begin{equation}
\operatorname{Br}_p\colon
\widetilde\Tor_p(\widehat R,P)
\times
\widetilde\Tor_{d-p-1}(\widehat R,P)
\longrightarrow\widehat R.
\label{eq:self-braiding-pairing}
\end{equation}
If $u=[\alpha]$ and $v=[\beta]$ are represented in the cellular model by
\begin{equation}
\alpha\in C^{d-p-1}(\Sigma;\mathcal P),
\qquad
\beta\in C^p(\Sigma;\mathcal P),
\end{equation}
then
\begin{equation}
\operatorname{Br}_p(u,v)
=
\int a^*\alpha\smile^\Omega\beta.
\label{eq:self-cup-pairing}
\end{equation}
Equivalently, in terms of the edge map and evaluation pairing,
\begin{equation}
\operatorname{Br}_p(u,v)
=
c_{d,p}\,\operatorname{ev}_p\bigl(u,\kappa_{d-p-1}(v)\bigr),
\label{eq:self-homological-pairing}
\end{equation}
where
\(
\kappa_{d-p-1}\colon
\widetilde\Tor_{d-p-1}(\widehat R,P)
\to
\widetilde\Ext^p(\overline P,R)
\) and $c_{d,p}\in\{\pm1\}$ are as in Proposition~\ref{prop:kappa_map}.

Suppose, in addition, that every $\widetilde\Ext^p(\overline P,R)$ module has finite length. Then all maps $\kappa_p$ are isomorphisms and the pairings~\eqref{eq:self-braiding-pairing} are perfect. In particular, they induce canonical isomorphisms
\begin{subequations}
\begin{align}
\widetilde\Tor_{d-p-1}(\widehat R,P)
& \cong
\widetilde\Tor_p(\widehat R,P)^\#, \\
\widetilde\Ext^{d-p-1}(\overline P,R)
& \cong
\widetilde\Ext^p(\overline P,R)^\#.
\label{eq:self-complementary-isomorphisms}
\end{align}
\end{subequations}
\end{cor}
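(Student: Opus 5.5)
The corollary is the specialization of Proposition~\ref{prop:cup_pairing} and Proposition~\ref{prop:kappa_map} to the duality pair $(M,N,\Omega)=(P,P,\Omega)$, so the plan is to check that each ingredient specializes without change.

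First, I would note that $(P,P,\Omega)$ is a duality pair. The two embeddings $\lambda_N$ and $\lambda_M$ of Definition~\ref{def:sesquilinear} are $\lambda_P$ and $q\mapsto\overline{\Omega(q,\cdot)}=\varepsilon\lambda_P(q)$, so both are injective. Hence the degree-zero defects computed from either side agree with \eqref{eq:self-degree-zero-invariants}, since multiplying by the unit $\varepsilon$ changes neither kernels nor cokernels. Applying Proposition~\ref{prop:cup_pairing}(a) with $M=N=P$ then gives $\operatorname{Br}_p$ on $\widetilde\Tor_p(\widehat R,P)\times\widetilde\Tor_{d-p-1}(\widehat R,P)$. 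The cellular formula \eqref{eq:self-cup-pairing} is simply Definition~\ref{def:smile_Omega_pairing} evaluated on representatives.

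Next, I would choose $G_\bullet=F_\bullet$ in the two-sided complex \eqref{eq:double_complex}, which yields \eqref{eq:self-two-sided-complex}. The edge maps $\kappa$ of Proposition~\ref{prop:kappa_map}(a) are independent of the choice of resolutions up to canonical isomorphism: different resolutions are related by chain homotopy equivalences, and these induce isomorphisms of the double complexes \eqref{eq:tensor_double_complex} compatible with the edge-to-edge construction of Proposition~\ref{prop:edge-to-edge}. So using a single resolution changes nothing. The factorization \eqref{eq:self-homological-pairing} is then Proposition~\ref{prop:kappa_map}(b) verbatim, with the same sign $c_{d,p}$.

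Finally, suppose every $\widetilde\Ext^p(\overline P,R)$ has finite length. Then Proposition~\ref{prop:kappa_map}(a) makes every $\kappa_p$ an isomorphism, and Proposition~\ref{prop:eval} gives perfectness of $\operatorname{Br}_p$ together with the first isomorphism. For the second isomorphism in \eqref{eq:self-complementary-isomorphisms}, I would transport it along $\kappa$ and use $\widetilde\Tor_p\cong(\widetilde\Ext^p)^\#$ from Proposition~\ref{prop:eval}, as in the preceding corollary with $N=P$. Since all the modules involved are finite, double character duals are canonical, so the isomorphism can be written in the stated direction.

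The only point needing care is the independence of $\kappa$ from the chosen resolutions, so that using $G_\bullet=F_\bullet$ is legitimate. I would handle it by the standard comparison-map argument for double complexes; everything else is bookkeeping.
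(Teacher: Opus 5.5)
Your proposal is correct and matches the paper's proof, which simply applies Propositions~\ref{prop:cup_pairing} and~\ref{prop:kappa_map} to the duality pair $(P,P,\Omega)$ and invokes Proposition~\ref{prop:eval} for the final isomorphisms. The resolution-independence step you single out as delicate is not actually needed: Proposition~\ref{prop:kappa_map} holds for arbitrary free resolutions $F_\bullet$ and $G_\bullet$, so taking $G_\bullet=F_\bullet$ is simply one admissible choice.
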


\begin{proof}
Apply Propositions~\ref{prop:cup_pairing} and~\ref{prop:kappa_map} to the duality pair $(P,P,\Omega)$. The final assertion also uses Proposition~\ref{prop:eval}.
\end{proof}

\begin{remark}[Middle-degree self-pairing]
\label{rem:self-pairing-middle-degree}
When $d$ is odd,
\eqref{eq:self-braiding-pairing} specializes to a canonical self-pairing
\begin{equation}
\operatorname{Br}_{\frac{d-1}{2}}\colon
\widetilde\Tor_{\frac{d-1}{2}}(\widehat R,P)
\times
\widetilde\Tor_{\frac{d-1}{2}}(\widehat R,P)
\longrightarrow \widehat R.
\label{eq:self-middle-degree-pairing}
\end{equation}
Properties of this self-pairing are discussed in Section \ref{sec:quadratic_self_pair}.
\end{remark}

\subsection{Lagrangian submodules}
\label{sec:Lagrangian}

We now specialize to the second class of duality pairs in Example~\ref{exmp: symmetric}. These are precisely the duality pairs arising from translation-invariant Pauli stabilizer codes. The resulting pairing coincides with the braiding pairing defined in Section~6 of~\cite{ruba2024homological}. In~lattice dimension~2, its perfectness was established in~\cite{ruba2025witt} by a different method, namely the bulk--boundary correspondence. This case is particularly relevant to physics, where the pairing encodes the braiding of abelian anyons.

Let $P$ be a finitely generated free $R$-module equipped with an $\varepsilon$-hermitian form $\Omega$ as in~\eqref{eq:Omega_epsilon_hermitian}, and assume that $\Omega$ is unimodular; equivalently, the map
\begin{equation}
\lambda_P\colon P\longrightarrow P^*,
\qquad
q\longmapsto\Omega(\mathord\cdot,q),
\end{equation}
is an isomorphism. For a submodule $S\subset P$, write
\begin{equation}
S^\perp=\{p\in P: \Omega(p,s)=0\text{ for all }s\in S\}.
\end{equation}
Let $L\subset P$ be a Lagrangian submodule, meaning that $L=L^\perp$, and set $Q=P/L$. We have a~short exact sequence
\begin{equation}
0\longrightarrow L\xrightarrow{\iota}P\xrightarrow{\pi}Q\longrightarrow0
\label{eq:Lagrangian-short-exact-sequence}
\end{equation}

When $\varepsilon=-1$, this corresponds to a translation-invariant Pauli stabilizer code, referred to as \emph{exact} in~\cite{haah2013commuting}, \emph{topological} in~\cite{ellison2022pauli}, and \emph{Lagrangian} in~\cite{ruba2024homological,ruba2025witt}.

Since $\Omega$ vanishes on $L\times L$, it descends to nondegenerate pairings
\begin{align}
\Omega_{Q,L}\colon Q\times L&\longrightarrow R,
&
\Omega_{Q,L}([p],\ell)&=\Omega(p,\ell),
\label{eq:Lagrangian-pairing-QL}\\
\Omega_{L,Q}\colon L\times Q&\longrightarrow R,
&
\Omega_{L,Q}(\ell,[p])&=\Omega(\ell,p).
\label{eq:Lagrangian-pairing-LQ}
\end{align}
Hence $(Q,L,\Omega_{Q,L})$ and $(L,Q,\Omega_{L,Q})$ are duality pairs. Moreover, the induced map
\begin{equation}
\lambda_L\colon L\longrightarrow Q^*,
\qquad
\ell\longmapsto\Omega_{Q,L}(\mathord\cdot,\ell),
\end{equation}
is an isomorphism because $L^\perp=L$. Consequently, by Proposition~\ref{prop:eval},
\begin{equation}
\widetilde\Ext^0(\overline Q,R)=0,
\qquad
\widetilde\Tor_0(\widehat R,Q)=0.
\label{eq:Lagrangian-degree-zero-vanishing}
\end{equation}

Choose a resolution $G_\bullet\xrightarrow{\tau}L$ by finitely generated free modules. Splicing it with~\eqref{eq:Lagrangian-short-exact-sequence} gives a~resolution of $Q$,
\begin{equation}
\cdots\longrightarrow G_1\longrightarrow G_0
\xrightarrow{\iota\tau}P\xrightarrow{\pi}Q\longrightarrow0.
\end{equation}
For the duality pair $(Q,L,\Omega_{Q,L})$, the associated two-sided complex is
\begin{equation}
\mathbb F_{Q,L}\colon\quad
\cdots\longrightarrow G_1\longrightarrow G_0
\xrightarrow{\lambda_P\iota\tau}P^*
\xrightarrow{\tau^*\iota^*}G_0^*
\longrightarrow G_1^*\longrightarrow\cdots.
\label{eq:Lagrangian-two-sided-QL}
\end{equation}
For the reversed duality pair $(L,Q,\Omega_{L,Q})$, it is
\begin{equation}
\mathbb F_{L,Q}\colon\quad
\cdots\longrightarrow G_1\longrightarrow G_0
\xrightarrow{\iota\tau}P
\xrightarrow{\tau^*\iota^*\lambda_P}G_0^*
\longrightarrow G_1^*\longrightarrow\cdots.
\label{eq:Lagrangian-two-sided-LQ}
\end{equation}
After identifying $P$ with $P^*$ by $\lambda_P$, these are the same sequence with their gradings shifted by one. With the convention $(C[1])^j=C^{j+1}$, and after the standard sign adjustment in the differential of a shifted complex, this gives
\begin{equation}
\mathbb F_{L,Q}\cong\mathbb F_{Q,L}[1].
\label{eq:Lagrangian-two-sided-shift}
\end{equation}

\begin{prop}[Degree shift]
\label{prop:Lagrangian-degree-shift}
For every $p\geq0$, the connecting homomorphisms associated with~\eqref{eq:Lagrangian-short-exact-sequence} give canonical isomorphisms
\begin{equation}
\widetilde\Ext^p(\overline L,R)
\xrightarrow{\ \cong\ }
\widetilde\Ext^{p+1}(\overline Q,R),
\qquad
\widetilde\Tor_{p+1}(\widehat R,Q)
\xrightarrow{\ \cong\ }
\widetilde\Tor_p(\widehat R,L).
\label{eq:Lagrangian-degree-shifts}
\end{equation}
\end{prop}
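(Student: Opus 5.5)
The proof will use the long exact sequences attached to the short exact sequence \eqref{eq:Lagrangian-short-exact-sequence}, together with the fact that $P$ is free. That freeness forces $\Ext^p(\overline P,R)=0$ and $\Tor_p(\widehat R,P)=0$ for $p\ge 1$.

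For the Ext statement, consider the long exact sequence of $\Ext^\bullet(\overline{\,\cdot\,},R)$:
\begin{equation*}
0\to Q^*\to P^*\xrightarrow{\iota^*} L^*\xrightarrow{\partial}\Ext^1(\overline Q,R)\to 0,\qquad \Ext^p(\overline L,R)\xrightarrow{\partial}\Ext^{p+1}(\overline Q,R)\ \ (p\ge1).
\end{equation*}
For $p\geq 1$ the second connecting map is an isomorphism, because its neighbours $\Ext^{p}(\overline P,R)$ and $\Ext^{p+1}(\overline P,R)$ vanish. Both sides then agree with $\widetilde\Ext$, and we are done in that range.

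For $p=0$ we must show that $\partial$ induces an isomorphism $\coker(\lambda_Q\colon Q\to L^*)\cong \Ext^1(\overline Q,R)$. Here $\lambda_Q$ denotes the map coming from the pair $(L,Q,\Omega_{L,Q})$, namely $[p]\mapsto \Omega(\cdot,p)|_L$. It factors as $Q\leftarrow P\xrightarrow{\lambda_P}P^*\xrightarrow{\iota^*}L^*$. Since $\lambda_P$ is an isomorphism and $P\to Q$ is surjective, the image of $\lambda_Q$ equals $\operatorname{im}(\iota^*)=\ker\partial$. Hence $\partial$ identifies $\coker(\lambda_Q)=L^*/\operatorname{im}\iota^*$ with $\Ext^1(\overline Q,R)$. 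Injectivity of $\lambda_Q$ is part of the duality-pair hypothesis, so $\widetilde\Ext^0(\overline L,R)=\coker\lambda_Q$ is indeed the left-hand side.

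For the Tor statement, tensor \eqref{eq:Lagrangian-short-exact-sequence} with $\widehat R$:
\begin{equation*}
0=\Tor_{p+1}(\widehat R,P)\to\Tor_{p+1}(\widehat R,Q)\xrightarrow{\partial}\Tor_p(\widehat R,L)\to\Tor_p(\widehat R,P).
\end{equation*}
For $p\ge1$ the right-hand term vanishes, so $\partial$ is an isomorphism. For $p=0$ the sequence shows that $\partial$ maps $\Tor_1(\widehat R,Q)$ isomorphically onto $\ker(\widehat\iota\colon\widehat L\to\widehat P)$. It remains to check that this kernel equals $\widetilde\Tor_0(\widehat R,L)=\ker(\widetilde\lambda_L\colon\widehat L\to Q^\#)$. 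Now $\widetilde\lambda_L$ is $\widehat R\otimes\lambda_L$ followed by the map $\theta\colon\widehat R\otimes Q^*\to Q^\#$, and $\lambda_L\colon L\to Q^*$ is an isomorphism. So the task reduces to showing that the kernel of $\theta$ corresponds exactly to $\ker(\widehat\iota)$.

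I would carry this out by factoring $\widetilde\lambda_L$ as
\begin{equation*}
\widehat L\xrightarrow{\widehat\iota}\widehat P\xrightarrow{\widehat\lambda_P}\widehat R\otimes P^*\cong P^\#\xrightarrow{\pi^\#}Q^\#.
\end{equation*}
Here $\widehat\lambda_P$ is an isomorphism, and $\widehat R\otimes P^*\cong P^\#$ holds by Lemma~\ref{lem:star_hat_sharp}. The composite $\pi^\#\circ(\widehat\lambda_P\widehat\iota)$ vanishes exactly on those elements whose image pairs trivially with $Q$, that is, which lie in the annihilator of $Q$ under the pairing with $\widehat P$.

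This is the main obstacle. Because $\pi^\#$ is injective (exactness of $(-)^\#$ applied to the surjection $P\to Q$), the composite $\pi^\#\circ(\widehat\lambda_P\widehat\iota)$ has the same kernel as $\widehat\iota$. So it suffices to verify the factorization itself, namely that $\widetilde\lambda_L(v)(q)=\Omega(q,v)$ agrees with the pairing of $\widehat\iota(v)$ against a lift of $q$. This is immediate from the definition of $\Omega_{Q,L}$.

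Finally, canonicity and independence of choices follow because the maps are connecting homomorphisms of derived functors. Alternatively, one can read the isomorphisms off from the identification $\mathbb F_{L,Q}\cong\mathbb F_{Q,L}[1]$ in \eqref{eq:Lagrangian-two-sided-shift} together with Lemma~\ref{lem:double_complex_rows_and_cols}, which also shows compatibility with the cellular models.
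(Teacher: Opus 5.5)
Your argument is correct, but it takes a different route from the paper's. The paper does not use the classical long exact sequences. It observes that, after identifying $P$ with $P^*$ via $\lambda_P$, the augmented two-sided complexes satisfy $\mathbb F_{L,Q}\cong\mathbb F_{Q,L}[1]$ (see \eqref{eq:Lagrangian-two-sided-shift}). Hence $\widetilde\Ext^p(\overline L,R)$ and $\widetilde\Ext^{p+1}(\overline Q,R)$ are the cohomology of the same complex at the same position. The same holds for $\widetilde\Tor$ after tensoring with $\widehat R$, using Lemma~\ref{lem:double_complex_rows_and_cols}. Compatibility with the connecting homomorphisms is then only asserted.

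The paper's approach treats $p=0$ uniformly with $p\geq 1$, because the degree-zero modifications are built into the augmentations. It is also the form needed later to compare with the cellular model in Corollary~\ref{cor:Lagrangian-shifted-duality}. Your approach instead proves directly that the connecting maps themselves are isomorphisms, using only $\Ext^{\geq 1}(\overline P,R)=0=\Tor_{\geq 1}(\widehat R,P)$. It also shows exactly where unimodularity and isotropy enter in degree zero, namely $\operatorname{im}\lambda_Q=\operatorname{im}\iota^*$ and $\ker\widetilde\lambda_L=\ker\widehat\iota$. The cost is the separate degree-zero bookkeeping.

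There is one slip to correct. $(-)^\#$ is contravariant, so $\pi^\#$ is the injection $Q^\#\to P^\#$, not a map $P^\#\to Q^\#$. The correct factorization is $\pi^\#\circ\widetilde\lambda_L=\varepsilon\,\widehat\lambda_P\circ\widehat\iota$, as maps $\widehat L\to\widehat R\otimes P^*\cong P^\#$. The sign comes from the convention $\lambda_M(m)=\overline{\Omega(m,\cdot)}$ together with $\varepsilon$-hermiticity, and it does not affect kernels. Injectivity of $\pi^\#$ then gives $\ker\widetilde\lambda_L=\ker(\widehat\lambda_P\widehat\iota)=\ker\widehat\iota$. This is exactly what your final verification sentence checks, so the argument goes through once the arrow is reversed.
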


\begin{proof}
The first isomorphism follows by taking cohomology in~\eqref{eq:Lagrangian-two-sided-shift}. After tensoring the same two-sided complexes with $\widehat R$, the corresponding homology groups give the second isomorphism. Under these identifications, the resulting maps agree with the usual connecting homomorphisms.
\end{proof}

\begin{cor}[Shifted complementary-degree duality]
\label{cor:Lagrangian-shifted-duality}
For every $0\leq p\leq d-1$, let
\begin{equation}
\operatorname{Br}_p\colon
\widetilde\Tor_p(\widehat R,Q)
\times
\widetilde\Tor_{d-p-1}(\widehat R,L)
\longrightarrow\widehat R
\end{equation}
be the pairing of Proposition~\ref{prop:cup_pairing}. Then
\begin{equation}
\operatorname{Br}^{L}_p(u,v)
:=
\operatorname{Br}_p\bigl(u,\partial_{d-p}v\bigr)
\label{eq:Lagrangian-shifted-pairing-definition}
\end{equation}
defines a canonical sesquilinear pairing 
\begin{equation}
\operatorname{Br}^{L}_p\colon
\widetilde\Tor_p(\widehat R,Q)
\times
\widetilde\Tor_{d-p}(\widehat R,Q)
\longrightarrow\widehat R.
\label{eq:Lagrangian-shifted-pairing}
\end{equation}

More explicitly, let $1\leq p\leq d-1$, and suppose that
\begin{equation}
u=[\alpha]\in\widetilde\Tor_p(\widehat R,Q),
\qquad
v=[\beta]\in\widetilde\Tor_{d-p}(\widehat R,Q),
\end{equation}
with
\begin{equation}
\alpha\in C^{d-p-1}(\Sigma;\mathcal Q),
\qquad
\beta\in C^{p-1}(\Sigma;\mathcal Q).
\end{equation}
For any lift $\widetilde\beta\in C^{p-1}(\Sigma;\mathcal P)$ of $\beta$, one has
\begin{align}
\operatorname{Br}^{L}_p(u,v)
&=
\int a^*\alpha\smile^{\Omega_{Q,L}}\delta\widetilde\beta \label{eq:Lagrangian-cup-and-homological-pairing} =
c_{d,p}\,\operatorname{ev}_p\Bigl(u,
\kappa_{d-p-1}\bigl(\partial_{d-p}v\bigr)\Bigr),
\end{align} 
where $\partial_{d-p}v\in  \widetilde\Tor_{d-p-1}(\widehat R,L)$ is the image of the second isomorphism in~\eqref{eq:Lagrangian-degree-shifts}.

Suppose, in addition, that every $\widetilde\Ext^q(\overline Q,R)$ is a finite length module. Then the pairings~\eqref{eq:Lagrangian-shifted-pairing} are perfect and induce canonical isomorphisms
\begin{equation}
\widetilde\Tor_{d-p}(\widehat R,Q)
\cong
\widetilde\Tor_p(\widehat R,Q)^\#,
\qquad
\widetilde\Ext^{d-p}(\overline Q,R)
\cong
\widetilde\Ext^p(\overline Q,R)^\#.
\label{eq:Lagrangian-complementary-isomorphisms}
\end{equation}
\end{cor}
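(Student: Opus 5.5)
The plan is to reduce everything to Proposition~\ref{prop:cup_pairing}, Proposition~\ref{prop:kappa_map} and the degree-shift isomorphism of Proposition~\ref{prop:Lagrangian-degree-shift}, applied to the duality pair $(Q,L,\Omega_{Q,L})$. First, $\operatorname{Br}^L_p$ is well defined and sesquilinear because it is the composition of the sesquilinear pairing $\operatorname{Br}_p$ from Proposition~\ref{prop:cup_pairing}(a) with the $R$-linear isomorphism $\partial_{d-p}\colon \widetilde\Tor_{d-p}(\widehat R,Q)\to\widetilde\Tor_{d-p-1}(\widehat R,L)$. This already gives the second equality in \eqref{eq:Lagrangian-cup-and-homological-pairing}: apply the factorization \eqref{eq:Br_factorization} with $M=Q$, $N=L$ to the pair $(u,\partial_{d-p}v)$.

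For the first equality, I will identify $\partial_{d-p}$ explicitly at the cellular level. Since the $T_\bullet$ are flat, the short exact sequence \eqref{eq:Lagrangian-short-exact-sequence} yields a short exact sequence of complexes $0\to C^\bullet(\Sigma;\mathcal L)\to C^\bullet(\Sigma;\mathcal P)\to C^\bullet(\Sigma;\mathcal Q)\to 0$. The snake-lemma connecting map sends the class of a cocycle $\beta\in C^{p-1}(\Sigma;\mathcal Q)$ to the class of $\delta\widetilde\beta\in C^p(\Sigma;\mathcal L)$, where $\widetilde\beta$ is any lift. I will check that this agrees with $\partial_{d-p}$ as defined through $\mathbb F_{L,Q}\cong\mathbb F_{Q,L}[1]$, up to a sign that I expect to be absorbed into the conventions; the proof of Proposition~\ref{prop:Lagrangian-degree-shift} already asserts that it agrees with the usual connecting homomorphism. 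Then \eqref{eq:self-cup-pairing}, in the form of Definition~\ref{def:smile_Omega_pairing} for $(Q,L)$, gives $\operatorname{Br}_p(u,\partial v)=\int a^*\alpha\smile^{\Omega_{Q,L}}\delta\widetilde\beta$. Independence of the lift follows because changing $\widetilde\beta$ by an element of $C^{p-1}(\Sigma;\mathcal L)$ changes $\delta\widetilde\beta$ by a coboundary, and integration by parts against the cocycle $\alpha$ kills it.

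The main obstacle is the boundary degrees. When $p=1$, $\beta$ lies in $C^0$, and $v\in\widetilde\Tor_{d-1}$ is represented modulo $\operatorname{im}\Delta$. When $p=d-1$, $\alpha\in C^0$ and $u\in\widetilde\Tor_{d-1}(\widehat R,Q)$; here I must confirm that $\delta\widetilde\beta\in C^{d-1}$ satisfies the modified cocycle condition $\Omega(\cdot,\int\delta\widetilde\beta)=0$. This holds because $\int\delta\widetilde\beta=0$. I will also verify that lifts of elements of $\operatorname{im}\Delta$ contribute zero, which follows from $\delta\Delta=0$ and the Lagrangian vanishing \eqref{eq:Lagrangian-degree-zero-vanishing}. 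These cases call for the same small adjustments as in the proof of Proposition~\ref{prop:cup_pairing}(a).

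Finally, assume every $\widetilde\Ext^q(\overline Q,R)$ has finite length. By Proposition~\ref{prop:cup_pairing}(b), $\operatorname{Br}_p$ induces $\widetilde\Tor_{d-p-1}(\widehat R,L)\cong\widetilde\Tor_p(\widehat R,Q)^\#$. Composing with the isomorphism $\partial_{d-p}$ gives the first isomorphism in \eqref{eq:Lagrangian-complementary-isomorphisms}. For the Ext statement, I will use Proposition~\ref{prop:eval} to write $\widetilde\Tor_j(\widehat R,Q)\cong\widetilde\Ext^j(\overline Q,R)^\#$. Since the Ext modules have finite length, hence finite cardinality by Lemma~\ref{lem: mobile}, double character duality is an isomorphism, and dualizing the Tor isomorphism gives $\widetilde\Ext^{d-p}(\overline Q,R)\cong\widetilde\Ext^p(\overline Q,R)^\#$.
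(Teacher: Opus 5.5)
Your proposal takes the same route as the paper. You apply Propositions~\ref{prop:cup_pairing} and~\ref{prop:kappa_map} to $(Q,L,\Omega_{Q,L})$, precompose with the connecting isomorphism, and get the first equality from the cellular description $[\beta]\mapsto[\delta\widetilde\beta]$ of $\partial_{d-p}$. The finite-length part, including the Ext isomorphism via Proposition~\ref{prop:eval} and double character duality of finite modules, is also fine.

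One justification in the boundary case $p=d-1$ (and $d=2$) is wrong as stated. You claim $\int\delta\widetilde\beta=0$, but the modified cocycle condition for $\widetilde\Tor_0(\widehat R,L)$ concerns $\int\delta\widetilde\beta$ computed in $\widehat L$, and there it is generally nonzero. The cochain $\delta\widetilde\beta$ is a coboundary only as a $\mathcal P$-valued cochain, and $\widehat\iota\colon\widehat L\to\widehat P$ is not injective. Its kernel is $\Tor_1(\widehat R,Q)\cong\widetilde\Tor_0(\widehat R,L)$, and $\int\delta\widetilde\beta$ is exactly the representative of $\partial_1 v$ there. If it always vanished, $\partial_1$ would be zero rather than an isomorphism.

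The correct argument uses the factorization of $\widetilde\lambda_L$ through $\widehat\iota$, since $\Omega_{Q,L}([p'],x)=\Omega(p',\widehat\iota x)$. Then $\widehat\iota\int\delta\widetilde\beta$ equals $\int\delta\widetilde\beta$ computed in $C^\bullet(\Sigma;\mathcal P)$, which vanishes because $\int\circ\delta=0$.

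Two smaller points:
\begin{itemize}
\item For classes in $\operatorname{im}\Delta$ you only need $\delta\Delta=0$; the Lagrangian vanishing is not used.
\item The possible sign in $\partial_{d-p}$ is not something you can absorb into conventions. The second equality holds for any choice of $\partial_{d-p}$, but the first holds exactly only if $\partial_{d-p}$ is the cellular connecting map. That is what the paper takes, via its assertion that the degree-shift isomorphisms are the usual connecting homomorphisms.
\end{itemize}
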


\begin{proof}
Apply Propositions~\ref{prop:cup_pairing} and~\ref{prop:kappa_map} to the duality pair $(Q,L,\Omega_{Q,L})$, and then use the connecting isomorphism~\eqref{eq:Lagrangian-degree-shifts}. Formula~\eqref{eq:Lagrangian-cup-and-homological-pairing} follows from the cellular description of the connecting homomorphism and the factorization~\eqref{eq:Br_factorization}.
\end{proof}

\begin{remark}[Middle-degree self-pairing]
\label{rem:Lagrangian-middle-degree}
When $d$ is even,
\eqref{eq:Lagrangian-shifted-pairing} specializes to a canonical
self-pairing
\begin{equation}
\operatorname{Br}^{L}_{\frac d2}\colon
\widetilde\Tor_{\frac d2}(\widehat R,Q)
\times
\widetilde\Tor_{\frac d2}(\widehat R,Q)
\longrightarrow \widehat R.
\label{eq:Lagrangian-middle-degree-pairing}
\end{equation}
Its properties are discussed in Section \ref{sec:quadratic_Lagrangian}.
\end{remark}

\section{Quadratic forms} \label{sec:quadratic}

In the previous sections we have defined a flat resolution of $\widehat R$
using a cell structure of the sphere at infinity $S^{d-1}$. This allowed us to relate invariants $\widetilde{\operatorname{Tor}}$
of duality pairs to the cohomology of the complex $C^{\bullet}(\Sigma;\mathcal M)$, and to construct pairings between $\widetilde{\operatorname{Tor}}$
modules in terms of the cup product. 

We will now focus on special duality pairs discussed in Sections \ref{sec:self-pairing} and \ref{sec:Lagrangian}. In these cases,
for $d$ odd and $d$ even respectively, modules $\widetilde \Tor_{\frac{d-1}{2}}(\widehat R,P)$ and $\widetilde \Tor_{\frac{d}{2}}(\widehat R,Q)$
carry a self-pairing. This pairing is hermitian or anti-hermitian, depending on $d$ and the symmetry sign $\varepsilon \in \{ \pm 1 \}$
characterizing the sesquilinear form $\Omega$.  As we will establish below, a richer structure emerges: the~self-pairing of the relevant $\widetilde \Tor$ module is either alternating, or symmetric with a distinguished quadratic refinement. The construction of the quadratic refinements uses Steenrod's higher cup products $\smile_i$ and the antipodal involution on $S^{d-1}$.

\Needspace{5\baselineskip}
\subsection{Projective space and equivariant cohomology}
 
\subsubsection*{Group (co)homology}

If $M$ is an abelian group with an involution $a$ and $\chi \in \{ \pm 1 \}$, we write
\begin{equation}
M^\chi := \ker(1- \chi a), \qquad M_\chi := \operatorname{coker}(1- \chi a).
\end{equation}
In the case $\chi = 1$, these are the standard invariants and coinvariants of the group $C_2 = \{ 1 , a \}$ acting on $M$. If $\chi=-1$, we obtain invariants and coinvariants for the action twisted by the nontrivial character of $C_2$. Functors of invariants and coinvariants are exact from one side, and their derived functors go by the names of group cohomology and homology~\cite{Brown1982Cohomology}. They may be expressed as extension and torsion modules over the group ring $\mathbb Z [C_2]$: respectively $\Ext^\bullet_{\mathbb Z[C_2]}(\mathbb Z^\chi,M)$ and $\Tor_\bullet^{\mathbb Z[C_2]}(\mathbb Z^\chi, M)$, where $\mathbb Z^\chi$ is $\mathbb Z$ with the action of $a$ given by multiplication by $\chi$. In particular, $\mathbb Z^+$ is the trivial representation of $C_2$. Both homology and cohomology can be computed using the standard two-periodic $\mathbb Z[C_2]$-free resolution $W_\bullet(\chi) \longrightarrow \mathbb Z^\chi$, with $W_i(\chi) = \mathbb Z[C_2]$ for all $i \geq 0$:
\begin{equation}
\dots \longrightarrow W_2(\chi) \xrightarrow{D_2^\chi} W_1(\chi) \xrightarrow{D_1^\chi}  W_0(\chi)   \xrightarrow{\pi^\chi} \mathbb Z^\chi
\label{eq:2-periodic-resol}
\end{equation}
Up to sign, the differentials alternate between symmetrization and skew-symmetrization maps:
\begin{equation}
D_i^\chi : W_i(\chi) \longrightarrow W_{i-1}(\chi), \qquad   D_i^\chi = 1 + \chi (-1)^i a.
\end{equation}

\begin{remark}
    The resolution $W_\bullet(+1)$ may be viewed as the cellular chain complex of a $C_2$-equivariant cell structure on the infinite-dimensional sphere $ S^\infty$. This is a contractible space with a $C_2$-action, often denoted $E C_2$. The quotient of the $C_2$-action is the classifying space $BC_2 \cong \mathbb{RP}^\infty$. See \cite[Chapter~I, \S\S4 and~6]{Brown1982Cohomology} or
\cite[Example~1B.3]{Hatcher2002}.
\end{remark}

\subsubsection*{Equivariant homology of the projective space}

The quotient of $S^{d-1}$ by the antipodal $C_2$-action, introduced in Section~\ref{sec:antipodal}, is the real projective space $\mathbb{RP}^{d-1}$. The fan $\Sigma$ defines a triangulation of the sphere $S^{d-1}$ in which every simplex is disjoint from its antipode. Therefore, it induces a natural triangulation on the quotient space. Consequently, one may interpret $C_\bullet(\Sigma;\mathbb Z)_+$ and $C^\bullet(\Sigma;\mathbb Z)^+$ as complexes of simplicial chains and cochains of $\mathbb{RP}^{d-1}$, respectively. 

The complexes $C_\bullet(\Sigma;\mathbb Z)_-$ and $C^\bullet(\Sigma;\mathbb Z)^-$ admit a similar interpretation using chains and cochains with coefficients in a local system. Namely, let $\mathbb Z^-$ be the local system on $\mathbb{RP}^{d-1}$ associated with the double cover $S^{d-1}\longrightarrow \mathbb{RP}^{d-1}$ and the sign representation of $C_2$. Then $C_\bullet(\Sigma;\mathbb Z)_-$ and $C^\bullet(\Sigma;\mathbb Z)^-$ are the corresponding simplicial chain and cochain complexes. Writing $\mathbb Z^+=\mathbb Z$ for the constant coefficient system, we may use the notation $\mathbb Z^\chi$, $\chi\in \{\pm 1 \}$, for both cases. We~denote the homology of $C_\bullet(\Sigma;\mathbb Z)_\chi$ by $
H_\bullet(\mathbb{RP}^{d-1};\mathbb Z^\chi)$.

Keeping these beautiful geometric interpretations in mind, we~will work mostly algebraically.

One of the objects of interest will be the complexes $C^\bullet(\Sigma;\mathcal R)^\pm$. More precisely, we will encounter cocycles which are not $\pm$-invariant on the nose, but rather only up to certain exact terms. To deal with this, we must replace strict invariants with homotopy invariants. We do this by replacing $C^\bullet(\Sigma;\mathcal R)^\pm$ with a quasi-isomorphic total complex that combines $C^\bullet(\Sigma;\mathcal R)$ with the resolution $W_\bullet(\chi)$, see \eqref{eq:2-periodic-resol}. Before we delve into that, let us present the corresponding homotopy-coinvariant construction for chains with $\mathbb Z$ coefficients.

Taking the tensor product $W_\bullet(\chi) \otimes_{\mathbb Z[C_2]} C_\bullet(\Sigma;\mathbb Z)$, with the $C_2$ action on $C_\bullet(\Sigma;\mathbb Z)$ given by the pushforward maps $a_*$, and using the canonical isomorphisms $W_i(\chi) \otimes_{\mathbb Z[C_2]} C_p(\Sigma;\mathbb Z) \cong C_p(\Sigma;\mathbb Z)$, we obtain the double complex
\begin{equation}
\begin{tikzcd}[column sep=small, row sep=large]
\cdots  &
\vdots \arrow[d,"\partial"]  &
\vdots \arrow[d,"\partial"]  &
\vdots \arrow[d,"\partial"] \\
\cdots \arrow[r] &
C_p(\Sigma;\mathbb Z)
\arrow[r,"D_2^\chi"] \arrow[d,"\partial"] &
C_p(\Sigma;\mathbb Z)
\arrow[r,"D_1^\chi"] \arrow[d,"\partial"] &
C_p(\Sigma;\mathbb Z)
\arrow[d,"\partial"] \\
\cdots \arrow[r] &
C_{p-1}(\Sigma;\mathbb Z)
\arrow[r,"D_2^\chi"] \arrow[d,"\partial"] &
C_{p-1}(\Sigma;\mathbb Z)
\arrow[r,"D_1^\chi"] \arrow[d,"\partial"] &
C_{p-1}(\Sigma;\mathbb Z)
\arrow[d,"\partial"] \\
&
\vdots &
\vdots &
\vdots 
\end{tikzcd}
\label{eq:equivariant_simplicial_chain_double}
\end{equation}
where $D_i^\chi = 1 + \chi (-1)^i a_*$. Rows of this double complex can be augmented to the right by the quotient map $\pi^\chi \colon C_\bullet(\Sigma;\mathbb Z) \to C_\bullet(\Sigma;\mathbb Z)_\chi$ to $\chi$-twisted coinvariants.

Consider the total complex of the double complex above:
\begin{equation}
    C_\bullet^{C_2}(\Sigma;\mathbb Z^\chi) = \Tot(W_\bullet(\chi) \otimes_{\mathbb Z[C_2]} C_\bullet(\Sigma;\mathbb Z)).
\end{equation}
Then we have
\begin{equation}
    C_p^{C_2}(\Sigma;\mathbb Z^\chi) \cong \bigoplus_{i=0}^p C_{p-i}(\Sigma;\mathbb Z).  
\end{equation}
Choosing the sign conventions of \eqref{eq:total_differential} in Appendix \ref{app:double_complex}, we have
\begin{equation}
    \partial_{\Tot} = \partial + (-1)^p D_{i}^\chi \qquad \text{on} \qquad W_i(\chi) \otimes_{\mathbb Z[C_2]} C_p(\Sigma;\mathbb Z).
\end{equation}
The complex $C_\bullet^{C_2}(\Sigma;\mathbb Z^\chi)$ is a simplicial version of the Borel construction of $\chi$-twisted $C_2$-equivariant homology of $S^{d-1}$. Since the $C_2$-action is free, equivariant homology reduces to homology of the quotient space with local coefficients.

\begin{lem}
The homology $H_\bullet^{C_2}(\Sigma;\mathbb Z^\chi)$ of the complex $C_\bullet^{C_2}(\Sigma;\mathbb Z^\chi)$ can be canonically identified with the homology of the projective space:
\begin{equation}
H_p^{C_2}(\Sigma;\mathbb Z^\chi) \cong 
H_p(\mathbb{RP}^{d-1},\mathbb Z^\chi),
\label{eq:equivariant_homology_iso}
\end{equation}
with integer coefficients if $\chi=1$ and with local coefficients $\mathbb Z^{-}$ if $\chi=-1$. An explicit quasi-isomorphism of the underlying complexes is given by the augmentation:
\begin{align}
    C_p^{C_2}(\Sigma;\mathbb Z^\chi) & \longrightarrow C_p(\Sigma;\mathbb Z)_\chi, \label{eq:equivariant_double_complex_aug}  \\
    \bigoplus_{i=0}^p C_{p-i}(\Sigma;\mathbb Z) \ni (a_0,\dots,a_p) & \longmapsto \pi^\chi (a_0) \in C_p(\Sigma;\mathbb Z)_\chi. \nonumber
\end{align}
\end{lem}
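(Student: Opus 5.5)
The plan is to compare the double complex \eqref{eq:equivariant_simplicial_chain_double} with its augmented rows, using the fact that the $C_2$-action on $C_\bullet(\Sigma;\mathbb Z)$ is free. Since no cell $\sigma\in\Sigma^\times$ coincides with its antipode, $\sigma\neq-\sigma$, each $C_p(\Sigma;\mathbb Z)$ is a free $\mathbb Z[C_2]$-module. A basis is obtained by choosing one cell from each antipodal pair $\{\sigma,-\sigma\}$. Consequently $C_p(\Sigma;\mathbb Z)$ is acyclic for the functors $\Tor^{\mathbb Z[C_2]}_{>0}(\mathbb Z^\chi,-)$. Concretely, each row of \eqref{eq:equivariant_simplicial_chain_double}, augmented by $\pi^\chi$, is exact: it is $W_\bullet(\chi)\otimes_{\mathbb Z[C_2]} C_p(\Sigma;\mathbb Z)\to \mathbb Z^\chi\otimes_{\mathbb Z[C_2]}C_p(\Sigma;\mathbb Z)=C_p(\Sigma;\mathbb Z)_\chi$. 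The homology of this row is $\Tor^{\mathbb Z[C_2]}_\bullet(\mathbb Z^\chi,C_p)$, which vanishes in positive degrees for free modules.

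One could also verify the row exactness by hand on a single free summand $\mathbb Z[C_2]$. There the maps $D_i^\chi=1\pm a$ alternate between symmetrization and antisymmetrization. The kernel of $1+a$ on $\mathbb Z[C_2]$ is $\mathbb Z(1-a)$, which equals the image of $1-a$, and the same holds with the signs exchanged. At the end of the row, $\ker \pi^\chi=\operatorname{im}(1-\chi a)=\operatorname{im}D_1^\chi$.

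Next I would apply the standard double complex lemma from Appendix~\ref{app:double_complex}. Since the double complex lies in the first quadrant, with $i\geq0$ and $0\le p\le d-1$, it is bounded, so there are no convergence issues. If all rows augmented by a column are exact, then the total complex maps quasi-isomorphically to that augmenting column. The map is the projection onto the $W_0$ component followed by $\pi^\chi$, which is exactly \eqref{eq:equivariant_double_complex_aug}. I would check that this is a chain map. On $(a_0,\dots,a_p)$ the total differential contributes $\partial a_0 + (-1)^{p-1}D_1^\chi a_1$ to the $W_0$ component, up to the sign convention. The term $D_1^\chi a_1=(1-\chi a_*)a_1$ is killed by $\pi^\chi$, and $\pi^\chi$ commutes with $\partial$ because $a_*$ does. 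The quasi-isomorphism itself follows from the usual filtration or spectral sequence argument: the cone of the augmentation is the total complex of a double complex with exact rows, so it is acyclic.

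Finally, I would identify $C_\bullet(\Sigma;\mathbb Z)_\chi$ with the simplicial chains of $\mathbb{RP}^{d-1}$ with coefficients in $\mathbb Z^\chi$. This identification was explained just before the statement: the triangulation descends to the quotient because every simplex is disjoint from its antipode. This gives \eqref{eq:equivariant_homology_iso}. The only point requiring care is that the ray ordering \eqref{eq:rho_ordering} makes $a_*$ orientation-compatible, so that $a_*[\sigma]=[-\sigma]$ with no sign. Coinvariants then literally identify $[\sigma]$ with $\chi[-\sigma]$, which is the local-system chain group. I expect the main obstacle to be only sign bookkeeping, namely making sure the augmentation commutes with the total differential under the chosen convention \eqref{eq:total_differential}. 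Canonicity holds because the quasi-isomorphism involves no choices.
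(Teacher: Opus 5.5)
Your proposal is correct and is essentially the paper's argument. Both reduce the claim to exactness of the $\pi^\chi$-augmented rows of \eqref{eq:equivariant_simplicial_chain_double}, which follows from freeness of each $C_p(\Sigma;\mathbb Z)$ over $\mathbb Z[C_2]$, and then recognize the augmentation as the connecting map $\partial_A$ of Appendix~\ref{app:double_complex}. Your explicit finite filtration by $p$ is a useful supplement: the appendix lemmas are stated for exact lines of bounded length, whereas here the exact rows run along the infinite $W_\bullet(\chi)$ direction, and acyclicity of the augmented total complex comes from the filtration by $p$, whose graded pieces are the exact augmented rows.
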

\begin{proof}
The map \eqref{eq:equivariant_double_complex_aug} is, up to a change of grading and switching the roles of rows and columns in a double complex, the map $\partial_A$ discussed in Lemma~\ref{lem:edge_A}. We only have to check that augmented rows of \eqref{eq:equivariant_simplicial_chain_double} are exact. Indeed, the $C_2$-action on $\Sigma$ is free, so $C_n(\Sigma;\mathbb Z)$ are free $\mathbb Z[C_2]$-modules. Therefore, $\ker(1\pm a_*) = \operatorname{im}(1 \mp a_*)$. Moreover, $\ker(\pi^\chi) = \operatorname{im}(D_1^\chi)$ by definition.
\end{proof}

Let us suppose momentarily that $d$ is even. Then the antipodal map is orientation-preserving, and $S^{d-1}$ induces an orientation of the projective space. The fundamental class of $\mathbb{RP}^{d-1}$, which is represented by a uniquely determined simplicial cycle $[\mathbb{RP}^{d-1}] \in C_{d-1}(\Sigma;\mathbb Z)_+$, generates the homology
\begin{equation}
H_{d-1}(\mathbb{RP}^{d-1},\mathbb Z) \cong \mathbb Z.
\end{equation}
It satisfies
\begin{equation}
    \pi^{+} [S^{d-1}] = 2 [\mathbb{RP}^{d-1}],
    \label{eq:twofold_cover}
\end{equation}
due to the two-fold covering of the projective space by $S^{d-1}$. We define 
\begin{equation}
 \Gamma \in C_{d-1}^{C_2}(\Sigma;\mathbb Z^+)   
\end{equation}
to be a cycle which corresponds to the fundamental class via the isomorphism \eqref{eq:equivariant_homology_iso} with $p=d-1$ and $\chi=+1$. It is unique up to $\partial_\Tot$-boundaries.

Next, let us consider the case of odd $d$. If $d \neq 1$, the antipodal map is orientation-reversing, and $\mathbb{RP}^{d-1}$ is non-orientable. Its top degree integral homology vanishes, and instead we will consider homology with coefficient system $\mathbb Z^-$. For $d=1$, the relevant projective space is a point and the preceding remarks do not apply. Moreover, local systems $\mathbb Z^+$ and $\mathbb Z^-$ are isomorphic for $d=1$. In either case, we have
\begin{equation}
H_{d-1}(\mathbb{RP}^{d-1},\mathbb Z^{-}) \cong \mathbb Z, \qquad d \text{ odd}.
\end{equation}
We let $[\mathbb{RP}^{d-1}] \in C_{d-1}(\Sigma;\mathbb Z)_-$ be the cycle uniquely characterized by 
\begin{equation}
 \pi^{-} [S^{d-1}]=2[\mathbb{RP}^{d-1}].   
 \label{eq:twofold_cover_minus}
\end{equation}
For brevity, we will refer to it as the fundamental class, just as in the case of even $d$. Furthermore, we define $ \Gamma \in C_{d-1}^{C_2}(\Sigma;\mathbb Z^-)  $ to be a cycle corresponding to $[\mathbb{RP}^{d-1}]$ via the isomorphism \eqref{eq:equivariant_homology_iso} with $p=d-1$ and $\chi=-1$. It is unique up to $\partial_\Tot$-boundaries.

Let us summarize the unified setup for both even and odd $d$, and establish the explicit formulas required for forthcoming computations. The equivariant chain $\Gamma \in C_{d-1}^{C_2}(\Sigma;\mathbb Z^{(-1)^d})$ is a tuple $\Gamma=(\gamma_0,\gamma_1,\dots,\gamma_{d-1})$ with $\gamma_i\in C_{d-i-1}(\Sigma;\mathbb Z)$.
The cycle condition $\partial_{\Tot} \Gamma = 0$ is equivalent to the following homological descent equations:
\begin{equation}
\partial \gamma_i + \big( (-1)^{d+i} -a_* \big) \gamma_{i+1}=0, \qquad 0 \leq i \leq d-2.
\label{eq:gamma_homological_descent}
\end{equation}
Because the augmented rows of the double complex \eqref{eq:equivariant_simplicial_chain_double} are exact, one can construct $\Gamma$ inductively. Once $\gamma_i$ is specified, the exactness guarantees the existence of $\gamma_{i+1}$ satisfying \eqref{eq:gamma_homological_descent}. Any two cycles $\Gamma$ obtained this way are homologous. The base of this inductive construction is the leading term $\gamma_0$, which must lift the fundamental class $[\mathbb{RP}^{d-1}]$. We can construct $\gamma_0$ explicitly by choosing a fundamental domain for the $C_2$-action. Specifically, let $\Sigma_{d-1}^\uparrow \subset \Sigma_{d-1}$ be a subset such that $\Sigma_{d-1}$ is the disjoint union of $\Sigma_{d-1}^\uparrow$ and its involution: $\Sigma_{d-1} = \Sigma_{d-1}^\uparrow \sqcup (- \Sigma_{d-1}^\uparrow)$. For instance, one can take $\Sigma_{d-1}^\uparrow$ to be the top simplices of a triangulation of a hemisphere of $S^{d-1}$. Recall the formula \eqref{eq:sphere_fund_class} for the fundamental class of the sphere, and note that with our choices of orientations
\begin{equation}
    [S^{d-1}:-\sigma] = (-1)^d [S^{d-1}:\sigma].
    \label{eq:fund_class_symmetry}
\end{equation}
We set
\begin{equation}
\gamma_0 =  \sum_{\sigma \in \Sigma_{d-1}^\uparrow} [S^{d-1}:\sigma] \, \sigma   .     
\end{equation}
Then $(1+(-1)^d a_*)\gamma_0 = [S^{d-1}]$ by \eqref{eq:fund_class_symmetry}, and hence $\pi^{(-1)^d}(\gamma_0) = [\mathbb{RP}^{d-1}]$.

\subsubsection*{Homotopy-invariant $\mathcal R$-valued cochains}

We now turn to consider the double complex \begin{equation} \Hom_{\mathbb Z[C_2]}( W_\bullet(\chi) ,C^\bullet(\Sigma;\mathcal R)),\end{equation} with the $C_2$ action on $C^\bullet(\Sigma;\mathcal R)$ given by the pullback $a^*$. Identifying 
\begin{equation}
\Hom_{\mathbb Z[C_2]}( W_i(\chi) ,C^p(\Sigma;\mathcal R)) \cong C^p(\Sigma;\mathcal R),
\end{equation}
we~find 
\begin{equation}
\begin{tikzcd}[column sep=small, row sep=large]
\vdots \arrow[d,"\delta"] &
\vdots \arrow[d,"\delta"] &
\vdots \arrow[d,"\delta"] &
\\
C^p(\Sigma;\mathcal R)
\arrow[r,"D^0_\chi"] \arrow[d,"\delta"] &
C^p(\Sigma;\mathcal R)
\arrow[r,"D^1_\chi"] \arrow[d,"\delta"] &
C^p(\Sigma;\mathcal R)
\arrow[r,"D^2_\chi"] \arrow[d,"\delta"] &
\cdots \\
C^{p+1}(\Sigma;\mathcal R)
\arrow[r,"D^0_\chi"] \arrow[d,"\delta"] &
C^{p+1}(\Sigma;\mathcal R)
\arrow[r,"D^1_\chi"] \arrow[d,"\delta"] &
C^{p+1}(\Sigma;\mathcal R)
\arrow[r,"D^2_\chi"] \arrow[d,"\delta"] &
\cdots \\
\vdots &
\vdots &
\vdots &
\end{tikzcd}
\end{equation}
where $D^i_\chi = (1-\chi (-1)^i a^*)$. Rows of this double complex can be augmented from the left by the inclusion map $\iota^\chi \colon C^\bullet(\Sigma;\mathcal R)^\chi \longrightarrow C^\bullet(\Sigma;\mathcal R)$ of $\chi$-twisted invariants. 

Consider the total complex of the double complex above:
\begin{equation}
    C^\bullet_{C_2}(\Sigma;\mathcal R^\chi) = \Tot \Big(\Hom_{\mathbb Z[C_2]} \big(W_\bullet(\chi),C^\bullet(\Sigma;\mathcal R) \big) \Big).
    \label{eq:equivariant_cochain_decomp}
\end{equation}
Then we have
\begin{equation}
    C^p_{C_2}(\Sigma;\mathcal R^\chi) \cong \bigoplus_{i=0}^p C^{p-i}(\Sigma;\mathcal R).  
\end{equation}
We choose sign conventions for the total differential $\delta_{\Tot}$ as defined in \eqref{eq:total_differential} in Appendix \ref{app:double_complex}. That is,
\begin{equation}
    \delta_{\Tot} = \delta + (-1)^p D^{i}_\chi \qquad \text{on} \qquad \Hom_{\mathbb Z[C_2]} \big(W_i(\chi) , C^p(\Sigma;\mathcal R) \big).
\end{equation}
More explicitly, a cochain $ \Xi \in C^p_{C_2}(\Sigma;\mathcal R^\chi)$ is a tuple
\begin{equation}
\Xi=(\xi_0,\xi_1,\ldots,\xi_p),
\qquad
\xi_i\in C^{p-i}(\Sigma;\mathcal R).
\end{equation}
The total differential is given by
\begin{equation}
\delta_{\Tot}\Xi 
=
\bigl(
\delta \xi_0, \delta \xi_1 + (-1)^{p} D^{0}_\chi \xi_0, \delta \xi_2 - (-1)^p D^1_\chi \xi_1, \dots \bigr) .
\end{equation}
Consequently, $\Xi$ is a cocycle if its components $\xi_i $ satisfy the following descent equations:
\begin{align}
\delta \xi_0 &=0 \\
    \delta \xi_i + (-1)^p ((-1)^{i-1} - \chi a^*) \xi_{i-1} &= 0, \qquad 1 \leq i \leq p ,  \nonumber \\
    (1 - \chi (-1)^p a^*) \xi_p & = 0. \nonumber
\end{align}

\subsubsection*{Pairings of chains and cochains} There exists a natural pairing
\begin{equation}
    \langle - , - \rangle  \colon C_\bullet^{C_2}(\Sigma;\mathbb Z^\chi) \times C^\bullet_{C_2} (\Sigma;\mathcal R^\chi) \longrightarrow \widehat R,
\end{equation}
which, for $\Upsilon \in C_p^{C_2}(\Sigma;\mathbb Z^\chi)$ and $\Xi  \in C^p_{C_2}(\Sigma;\mathcal R^\chi)$, is given by
\begin{equation}
\langle \Upsilon , \Xi \rangle = \sum_{i=0}^p \langle \upsilon_i, \xi_i \rangle.
\end{equation}
It satisfies
\begin{equation}
    \langle \partial_{\Tot} \Upsilon , \Xi \rangle_0 = \langle \Upsilon, \delta_{\Tot} \Xi \rangle_0.
    \label{eq:tot_pairing}
\end{equation}
We will consider this pairing mostly with $p = d-1$, $\Upsilon = \Gamma$ and suitably chosen cocycle $\Xi$. Then \eqref{eq:tot_pairing} implies that $\langle \Gamma,\Xi \rangle_0$ does not depend on the arbitrary choices involved in the definition of~$\Gamma$, and is invariant under cohomological changes of $\Xi$. 

\subsection{Higher cup products}

In this section we define Steenrod's higher cup products $\smile_i$ generalized to $\mathcal R$-valued cochains. We also introduce products $\smile_i^\Omega$ generalizing the $\smile^\Omega$ introduced in Definition \ref{def:smile_Omega_product}, and we use them to derive symmetry properties of pairings $\operatorname{Br}$ given by the cup product.

Higher cup products have been used by Steenrod~\cite{steenrod1947products} to construct celebrated squaring operations in cohomology with $\mathbb F_2$ coefficients, and here they play an essential role in our construction of quadratic forms. They measure the failure of the cup product $\smile$ to be graded-commutative at the cochain level. In our setting, this failure of graded commutativity is reflected by the lack of symmetry or skew-symmetry under $a^*$ of cochains of the form $a^* \alpha \smile^\Omega \alpha$ or $a^* \alpha \smile^\Omega \delta \alpha$. We~use higher cup products to explicitly exhibit symmetry or skew-symmetry up to homotopy terms. 

\begin{defn}[Higher cup products] \label{def:higher-cup}
Let $i\geq 0$. The $i$-th cup product of $f\in C^p(\Sigma;\mathcal R)$ and
$g\in C^q(\Sigma;\mathcal R)$ is the cochain
\begin{equation}
f\smile_i g
\in
C^{p+q-i}(\Sigma;\mathcal R),
\end{equation}
defined as follows. If $i > \min \{ p , q \}$, we set $f\smile_i g=0$. Otherwise, we put $n=p+q-i$ and let $\sigma\in\Sigma_n$ be the cone spanned by the ordered rays
\begin{equation}
\rho_0<\rho_1<\cdots<\rho_n.
\end{equation}
We will give a formula for the $\sigma$-component of $f\smile_i g$.

For a subset $A\subseteq\{0,\ldots,n\}$, denote by $\sigma_A$ the face
of $\sigma$ spanned by the rays $\rho_a$ with $a\in A$.

An overlapping partition of $\{0,\ldots,n\}$ into $i+2$ intervals is
specified by integers $\ell_1,\dots,\ell_{i+1}$ satisfying
\(
0\leq \ell_1<\ell_2<\cdots<\ell_{i+1}\leq n.
\)
Set $\ell_0=0$, $\ell_{i+2}=n$, and
\begin{equation}
L_r
=
\{\ell_r,\ell_r+1,\ldots,\ell_{r+1}\},
\qquad
0\leq r\leq i+1.
\end{equation}
Associated to the overlapping partition
$\mathcal L=(L_0,\ldots,L_{i+1})$, define
\begin{equation}
A_{\mathcal L}
=
\bigcup_{r \text{ even}}
L_r,
\qquad
B_{\mathcal L}
=
\bigcup_{r \text{ odd}}
L_r.
\end{equation}
Thus
\begin{equation}
A_{\mathcal L}\cup B_{\mathcal L}
=
\{0,\ldots,n\},
\qquad
A_{\mathcal L}\cap B_{\mathcal L}
=
\{\ell_1,\ldots,\ell_{i+1}\}.
\end{equation}

Let $w_{\mathcal L}$ be the permutation of $\{0,\ldots,n\}$ obtained
by first listing the elements of $A_{\mathcal L}$ in increasing order
and then listing the elements of
$B_{\mathcal L}\setminus A_{\mathcal L}$, also in increasing order. Let $\operatorname{sgn}(w_{\mathcal L}) \in \{ \pm 1 \}$ be the sign of $w_{\mathcal L}$.

The $\sigma$-component of the higher cup product is defined to be
\begin{equation}
(f\smile_i g)_\sigma
=
\sum_{\substack{\mathcal L \text{ such that:}\\
                 |A_{\mathcal L}|=p+1\\
                 |B_{\mathcal L}|=q+1}}
\operatorname{sgn}(w_{\mathcal L})
f_{\sigma_{A_{\mathcal L}}}
\cdot
g_{\sigma_{B_{\mathcal L}}}
\in
\mathcal R(\sigma).
\end{equation}
As in Definition~\ref{def:cup}, the two factors in each summand are
implicitly mapped to $\mathcal R(\sigma)$ by the corresponding face
inclusions. 
\end{defn}

For $i=0$, the only overlapping partition that contributes has
$\ell_1=p$, and Definition~\ref{def:higher-cup} reduces to
Definition~\ref{def:cup}. Thus
\begin{equation}
f\smile_0 g=f\smile g.
\end{equation}
We also adopt the convention $f\smile_{-1}g=0$. We also remark that multiple sign conventions for higher cup products occur in the literature. Definition \ref{def:higher-cup} is a direct adaptation of the definition in \cite{steenrod1947products} to our setting.

\begin{prop}\label{prop:cup_differential}
For $f\in C^p(\Sigma;\mathcal R)$ and
$g\in C^q(\Sigma;\mathcal R)$, the higher cup products satisfy
\begin{equation}
\begin{split}
&\delta(f\smile_i g)
-(\delta f)\smile_i g
-(-1)^p f\smile_i(\delta g)\\
&\qquad=
(-1)^{p+q+i}f\smile_{i-1}g
+
(-1)^{p+q+pq}g\smile_{i-1}f.
\end{split}
\label{eq:Steenrod_relation}
\end{equation}
Moreover, higher cup products are compatible with $a^*$:
\begin{equation}
    a^* (f \smile_i g) = (a^* f) \smile_i (a^* g).
    \label{eq:a_naturality_cup_i}
\end{equation}
\end{prop}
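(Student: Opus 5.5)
The plan is to reduce both statements to their classical counterparts for integral simplicial cochains, where Steenrod's coboundary formula is known, and then transfer them to $\mathcal R$-valued cochains. Neither identity involves anything specific about the coefficients beyond a commutative product compatible with the face inclusions. The structural input is as follows. For every pair of faces $\alpha,\beta\preceq\sigma$ we have face inclusions into $\mathcal R(\sigma)$. The product in $\mathcal R(\sigma)$ is commutative and associative, and it is compatible with these inclusions: $\mathcal R(\tau)\subset\mathcal R(\sigma)$ is a subring for $\tau\preceq\sigma$, since all products are computed inside $\widehat R$ on series supported in translates of $\sigma$.

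For \eqref{eq:Steenrod_relation}, evaluate both sides on a fixed $\sigma\in\Sigma_{p+q-i+1}$ with ordered rays $\rho_0<\dots<\rho_{n}$. Both sides then become finite signed sums of terms $f_{\sigma_A}\cdot g_{\sigma_B}$ (or $g_{\sigma_B}f_{\sigma_A}$, which is the same by commutativity), indexed by pairs of subsets $A,B\subseteq\{0,\dots,n\}$.

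\begin{itemize}
\item In $\delta(f\smile_i g)$, the coboundary contributes faces $\sigma\setminus\{\rho_j\}$ with sign $(-1)^j$, which is the cellular incidence $[\sigma:\tau]$ for our orientation induced by the ray ordering.
\item In $(\delta f)\smile_i g$ and $f\smile_i \delta g$, the coboundaries expand $f_{\sigma_A}$ into sums over $f$ evaluated on codimension-one faces of $\sigma_A$.
\end{itemize}

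Each resulting term $f_{\sigma_{A'}}g_{\sigma_{B'}}$ is simply the image of $f_{\sigma_{A'}}\otimes g_{\sigma_{B'}}$ under the product in $\mathcal R(\sigma)$, and the coefficient in front of it is an integer determined purely by the combinatorics of the index sets. That coefficient is therefore identical to the coefficient arising in the classical identity for integer cochains on the standard $n$-simplex, applied to indicator cochains $f=\mathbf 1_{A'}$ and $g=\mathbf 1_{B'}$. So the identity reduces to Steenrod's formula in \cite{steenrod1947products}, whose sign conventions Definition~\ref{def:higher-cup} copies verbatim. The swapped term $g\smile_{i-1}f$ produces products $g_{\sigma_B}f_{\sigma_A}$, and commutativity of $\mathcal R(\sigma)$ lets us rewrite these in the same form.

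The main obstacle is making this reduction rigorous rather than simply citing it. I would handle it by a universal-coefficient argument. Fix $\sigma$ and consider the free commutative ring $\mathbb Z[u_A,v_B]$ with variables indexed by the faces of the simplex. Both sides of \eqref{eq:Steenrod_relation} at $\sigma$ are images of a universal integer polynomial identity in the bilinear monomials $u_Av_B$ under the map $u_A\mapsto f_{\sigma_A}$, $v_B\mapsto g_{\sigma_B}$. The universal identity holds because it is equivalent to Steenrod's formula on the simplex with integer coefficients, evaluated on all indicator pairs. One must also check that the paper's differential $\delta$, with signs $[\sigma:\tau]$, agrees with the standard simplicial coboundary. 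This holds for the orientation chosen in Subsection~\ref{sec:cup}, where $[\sigma:\sigma\setminus\rho_j]=(-1)^j$. The check needs care for outward radial conventions, but it only affects an overall convention that is fixed throughout.

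For \eqref{eq:a_naturality_cup_i}, I would repeat the argument of Lemma~\ref{lem:a-naturality-cup}. The antipodal map preserves the ordering of rays within each cell, so $-(\sigma_A)=(-\sigma)_A$ for every index set $A$. The overlapping partitions $\mathcal L$ and the signs $\operatorname{sgn}(w_{\mathcal L})$ depend only on positions in the ordering, so they are the same for $\sigma$ and $-\sigma$. Finally, the conjugation $f\mapsto\overline f$ is a ring isomorphism $\mathcal R(-\sigma)\to\mathcal R(\sigma)$ compatible with face inclusions, since it comes from the map on supports $\lambda\mapsto-\lambda$. Hence $\overline{(f\smile_i g)_{-\sigma}}=\sum\operatorname{sgn}(w_{\mathcal L})\overline{f_{-\sigma_A}}\,\overline{g_{-\sigma_B}}$, which is exactly $((a^*f)\smile_i(a^*g))_\sigma$.
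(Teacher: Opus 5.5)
Your proposal is correct and follows the paper's own brief proof. The paper likewise defers \eqref{eq:Steenrod_relation} to Steenrod's combinatorial calculation in \cite{steenrod1947products}, and your universal-polynomial transfer (using commutativity of $\mathcal R(\sigma)$ and the fact that face inclusions are ring maps) is a clean way to make that precise; the paper also proves \eqref{eq:a_naturality_cup_i} exactly as in Lemma~\ref{lem:a-naturality-cup}. One small correction to an aside: the incidence convention is not merely an ``overall convention'', since replacing $\delta$ by $-\delta$ would flip the sign of the left-hand side relative to the right, but the value $[\sigma:\sigma\setminus\rho_j]=(-1)^j$ you assert is the correct one for the paper's orientations (compare $\delta^0$ in Example~\ref{exmp:2d_resol}), so the argument stands.
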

The proof of \eqref{eq:Steenrod_relation} is the same combinatorial calculation as in Section~5 of~\cite{steenrod1947products}. The proof of \eqref{eq:a_naturality_cup_i} is analogous to that of Lemma \ref{lem:a-naturality-cup}.

In particular, if $f$ and $g$ are cocycles, then
\begin{equation}
\delta(f\smile_1 g)
=
(-1)^{p+q+1}(f\smile g
-
(-1)^{pq}g\smile f),
\end{equation}
so $\smile_1$ is an explicit cochain homotopy witnessing the graded
commutativity of the ordinary cup product.

\begin{defn} \label{def:higher_smile_Omega_product}
Let $(M,N,\Omega)$ be a duality pair. We define the sesquilinear pairing
\begin{equation}
 C^p(\Sigma; \mathcal M) \times C^q(\Sigma;\mathcal N) \longrightarrow C^{p+q-i} (\Sigma;\mathcal R), \qquad (\alpha,\beta) \mapsto a^* \alpha \smile_i^\Omega \beta,
 \end{equation} 
by first specifying it on simple tensors. If $\alpha = f \otimes m$ and $\beta = g \otimes n$, with
\begin{equation} 
f \in C^p(\Sigma;\mathcal R), \qquad g \in C^q(\Sigma;\mathcal R), \qquad m \in M, \qquad n \in N,
\end{equation}
we set:
\begin{equation}
    a^* \alpha \smile^\Omega_i \beta = \Omega(m,n) \, a^* f \smile_i g.
\end{equation}
\end{defn}

\begin{prop} \label{prop:P_val_Steenrod}
Let $P$ be a module with an $\varepsilon$-hermitian self-pairing $\Omega$, see \eqref{eq:Omega_epsilon_hermitian}. Then for $\alpha \in C^p(\Sigma;\mathcal P)$ and
$\beta \in C^q(\Sigma;\mathcal P)$, we have
\begin{align}
    &\delta( a^* \alpha \smile_i^\Omega \beta ) - a^* \delta \alpha \smile_i^\Omega \beta -(-1)^p a^* \alpha \smile_i^\Omega \delta \beta \label{eq:P_valued_Steenrod} \\
    & = (-1)^{p+q+i} a^* \alpha \smile_{i-1}^\Omega \beta + \varepsilon (-1)^{p+q+pq} a^*( a^* \beta \smile_{i-1}^\Omega \alpha ). \nonumber
\end{align}
\end{prop}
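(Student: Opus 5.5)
The plan is to reduce \eqref{eq:P_valued_Steenrod} to the scalar Steenrod relation \eqref{eq:Steenrod_relation} by working on simple tensors. Since $C^p(\Sigma;\mathcal P)=C^p(\Sigma;\mathcal R)\otimes P$, every cochain is a finite sum of simple tensors. First I will check that both sides of \eqref{eq:P_valued_Steenrod} are biadditive in $(\alpha,\beta)$. This holds because $\delta$, $a^*$ and all the products $\smile_j^\Omega$ are additive. It therefore suffices to take $\alpha=f\otimes m$ and $\beta=g\otimes n$, with $f\in C^p(\Sigma;\mathcal R)$, $g\in C^q(\Sigma;\mathcal R)$ and $m,n\in P$.

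\textbf{The left-hand side.} On $C^\bullet(\Sigma;\mathcal P)$ the differential acts as $\delta\otimes\mathrm{id}_P$, so $\delta\alpha=\delta f\otimes m$ and $\delta\beta=\delta g\otimes n$. The differential $\delta$ on $C^\bullet(\Sigma;\mathcal R)$ is $R$-linear, so it commutes with multiplication by the scalar $\Omega(m,n)\in R$. The pullback $a^*$ commutes with $\delta$. Hence the left-hand side equals $\Omega(m,n)$ times
\[
\delta(a^*f\smile_i g)-(\delta a^*f)\smile_i g-(-1)^p\,a^*f\smile_i\delta g .
\]
I then apply \eqref{eq:Steenrod_relation} to the pair $F=a^*f\in C^p(\Sigma;\mathcal R)$ and $g$. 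This expresses the left-hand side as
\[
\Omega(m,n)\Big((-1)^{p+q+i}\,a^*f\smile_{i-1}g+(-1)^{p+q+pq}\,g\smile_{i-1}a^*f\Big).
\]
The first summand is exactly $(-1)^{p+q+i}\,a^*\alpha\smile_{i-1}^\Omega\beta$. For the second summand, $a^*$ is an involution, so \eqref{eq:a_naturality_cup_i} gives
\[
g\smile_{i-1}a^*f=a^*\big(a^*g\smile_{i-1}f\big).
\]

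\textbf{The right-hand side.} It remains to identify the last term of \eqref{eq:P_valued_Steenrod}. By definition,
\[
a^*\beta\smile_{i-1}^\Omega\alpha=\Omega(n,m)\,a^*g\smile_{i-1}f .
\]
The key point is that $a^*$ is conjugate-linear with respect to $R$-scalars. Indeed, from \eqref{eq:cochain_antipode} we get $(a^*(rc))_\sigma=\overline{r\,c_{-\sigma}}=\bar r\,(a^*c)_\sigma$. Therefore
\[
a^*\big(a^*\beta\smile_{i-1}^\Omega\alpha\big)=\overline{\Omega(n,m)}\;a^*\big(a^*g\smile_{i-1}f\big).
\]
By $\varepsilon$-hermitianity, $\overline{\Omega(n,m)}=\varepsilon\,\Omega(m,n)$, and using $\varepsilon^2=1$ the two sides match.

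\textbf{Main obstacle.} I expect the only delicate step to be this bookkeeping of conjugations and signs. One must verify that $a^*$ acts antilinearly on scalars, and that the $\varepsilon$ produced by swapping the arguments of $\Omega$ appears exactly once. One must also make sure that the sign $(-1)^{p+q+pq}$ from \eqref{eq:Steenrod_relation} is carried over unchanged, since $a^*$ does not alter degrees. A careless convention, such as treating $a^*$ as $R$-linear, would lose the factor $\varepsilon$.
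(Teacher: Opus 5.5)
Your proposal is correct and follows the same route as the paper. You reduce to simple tensors by sesquilinearity, apply the scalar Steenrod relation \eqref{eq:Steenrod_relation} to $a^*f$ and $g$, and conclude with $\varepsilon$-hermiticity, while spelling out two steps the paper leaves implicit: the antilinearity of $a^*$, and the identity $g\smile_{i-1}a^*f=a^*(a^*g\smile_{i-1}f)$ obtained from \eqref{eq:a_naturality_cup_i}.
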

\begin{proof}
    By sesquilinearity, it suffices to verify \eqref{eq:P_valued_Steenrod} for simple tensors. This reduces to Proposition \ref{prop:cup_differential} and $\varepsilon$-hermiticity of $\Omega$. 
\end{proof}

\begin{cor} \label{cor:cup_symmetry}
Let $P, \alpha, \beta$ be as in Proposition \ref{prop:P_val_Steenrod}. If $\delta \alpha = \delta \beta=0$, then
\begin{equation}
    \int a^* \alpha \smile^\Omega \beta = \varepsilon  (-1)^{pq} (-1)^d \overline{\int a^* \beta \smile^\Omega \alpha}.
\end{equation}
\end{cor}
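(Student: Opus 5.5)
Apply Proposition~\ref{prop:P_val_Steenrod} with $i=1$ to the cocycles $\alpha\in C^p(\Sigma;\mathcal P)$ and $\beta\in C^q(\Sigma;\mathcal P)$, where $p+q=d-1$ so that both integrals are defined. Since $\delta\alpha=\delta\beta=0$, the left-hand side of \eqref{eq:P_valued_Steenrod} reduces to $\delta(a^*\alpha\smile_1^\Omega\beta)$. With $\smile_0=\smile$, this gives the cochain identity
\begin{equation}
\delta\bigl(a^*\alpha\smile_1^\Omega\beta\bigr)
=
(-1)^{p+q+1}\, a^*\alpha\smile^\Omega\beta
+\varepsilon(-1)^{p+q+pq}\, a^*\bigl(a^*\beta\smile^\Omega\alpha\bigr)
\end{equation}
in $C^{d-1}(\Sigma;\mathcal R)$.

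Next we integrate over $[S^{d-1}]$. The augmentation $\int\colon C^{d-1}(\Sigma;\mathcal R)\to\widehat R$ vanishes on coboundaries, because $[S^{d-1}]$ is a cycle; equivalently, $\int\circ\,\delta^{d-2}=0$ by exactness in Theorem~\ref{thm:cellular}. For $d=1$ the $\smile_1$ term is zero anyway, since $i>\min\{p,q\}=0$, so the identity already holds without a coboundary. Hence
\begin{equation}
\int a^*\alpha\smile^\Omega\beta = \varepsilon(-1)^{pq}\int a^*\bigl(a^*\beta\smile^\Omega\alpha\bigr).
\end{equation}

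It remains to evaluate $\int a^*c$ for $c\in C^{d-1}(\Sigma;\mathcal R)$. By \eqref{eq:push_pull_conjugate}, $\int a^*c=\langle[S^{d-1}],a^*c\rangle=\overline{\langle a_*[S^{d-1}],c\rangle}$. By \eqref{eq:fund_class_symmetry}, $a_*[S^{d-1}]=(-1)^d[S^{d-1}]$. Indeed, the cell $-\sigma$ appears with coefficient $[S^{d-1}:\sigma]=(-1)^d[S^{d-1}:-\sigma]$. Therefore $\int a^*c=(-1)^d\,\overline{\int c}$, and substituting $c=a^*\beta\smile^\Omega\alpha$ gives the claim.

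The only delicate point is bookkeeping of signs: the orientation convention behind \eqref{eq:fund_class_symmetry}, and the fact that $\overline{\phantom{x}}$ commutes with $\int$ as used in \eqref{eq:push_pull_conjugate}. We should also check that the $\varepsilon$-hermitian swap in Proposition~\ref{prop:P_val_Steenrod} already produces $a^*(a^*\beta\smile^\Omega\alpha)$ exactly as written, so that no additional conjugation is introduced. In the $d=1$ case the convention for $[S^0]$ in Definition~\ref{def:fund} should be checked directly against $a_*[S^0]=-[S^0]$, which holds since $[S^0]$ is the difference of the two points.
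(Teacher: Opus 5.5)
Your proof is correct and follows the same route as the paper. You use \eqref{eq:P_valued_Steenrod} with $i=1$ to show that $a^*\alpha\smile^\Omega\beta$ and $\varepsilon(-1)^{pq}a^*(a^*\beta\smile^\Omega\alpha)$ differ by a coboundary, then integrate, then apply \eqref{eq:push_pull_conjugate} together with $a_*[S^{d-1}]=(-1)^d[S^{d-1}]$. Your added remarks on the vanishing of $\int$ on coboundaries and on the case $d=1$ make explicit what the paper leaves implicit.
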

\begin{proof}
    Relation \eqref{eq:P_valued_Steenrod} shows that $a^* \alpha \smile^\Omega \beta$ equals $\varepsilon (-1)^{pq} a^*(a^* \beta \smile^\Omega \alpha)$ up to a coboundary term. Therefore, using \eqref{eq:push_pull_conjugate},
    \begin{align}
        \langle [S^{d-1}] , a^* \alpha \smile^\Omega \beta \rangle &= \varepsilon (-1)^{pq}  \overline{\langle  a_* [S^{d-1}], a^* \beta \smile^\Omega \alpha \rangle} \\
        &= \varepsilon (-1)^{pq} (-1)^d \overline{\langle  [S^{d-1}], a^* \beta \smile^\Omega \alpha \rangle}   . \nonumber
    \end{align}
\end{proof}

\subsection{Construction for modules with self-pairings} \label{sec:quadratic_self_pair}
Let $P$ be a module with an $\varepsilon$-hermitian self-pairing as in Section \ref{sec:self-pairing}. We~will assume that $d$ is odd throughout. Then the form $\operatorname{Br}_p$ in the middle degree $p = \frac{d-1}{2}$ becomes a self-pairing of the module $A = \widetilde \Tor_{\frac{d-1}{2}}(\widehat R,P)$. We study the $k$-bilinear form $b$ given by the scalar part of $\operatorname{Br}_{\frac{d-1}{2}}$. Depending on the sign $\chi = \varepsilon (-1)^{\frac{d-1}{2}}$, this bilinear form is either alternating or symmetric with a~distinguished quadratic refinement~$q$. 

\begin{defn} \label{def:self_pairing_xi}
For $\alpha \in C^{\frac{d-1}{2}}(\Sigma;\mathcal P)$ we set
\begin{equation}
 \Xi(\alpha) = (\xi_0(\alpha),\xi_1(\alpha),\dots), \qquad \xi_i(\alpha) = a^* \alpha \smile_i^\Omega \alpha.   
\end{equation}
\end{defn}

\begin{lem} \label{lem:self-pairing-Xi}
    Let $\alpha, \beta \in C^{\frac{d-1}{2}}(\Sigma;\mathcal P)$ and set $\chi = \varepsilon (-1)^{\frac{d-1}{2}}$. Viewing $\Xi(\alpha), \Xi(\beta)$ as cochains in $C^{d-1}_{C_2}(\Sigma;\mathcal R^\chi)$, the following statements hold.
    \begin{enumerate}
        \item If $\alpha$ is a $\delta$-cocycle, then $\Xi(\alpha)$ is a $\delta_\Tot$-cocycle. If~$\alpha$ is a $\delta$-coboundary, then $\Xi(\alpha)$ is a~$\delta_\Tot$-coboundary.
        \item
        If $\alpha$ is a $\delta$-cocycle and $\beta$ is a $\delta$-coboundary, then $\Xi(\alpha+\beta)-\Xi(\alpha)$ is a $\delta_\Tot$-coboundary. 
        \item If $\alpha$ is a $\delta$-cocycle, then
        \begin{equation}
            \int a^* \alpha \smile^\Omega \alpha = \langle \Gamma, \Xi( \alpha) \rangle  - \chi \overline{\langle \Gamma,\Xi(\alpha) \rangle}.
        \label{eq:self-pairing-divide-by-two}
        \end{equation}
        \item If $\chi =-1$ and $\alpha,\beta$ are $\delta$-cocycles, then
        \begin{equation}
        \left(    \int a^* \alpha \smile^\Omega \beta \right)_0 = \langle \Gamma , \Xi(\alpha+\beta) - \Xi(\alpha)-\Xi(\beta) \rangle_0.
        \label{eq:xi_refinement_identity_self_pair}
        \end{equation}
    \end{enumerate}
\end{lem}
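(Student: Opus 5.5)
The plan is to feed everything through Proposition~\ref{prop:P_val_Steenrod}, specialized to $p=q=m:=\frac{d-1}{2}$, so that $p+q=d-1$ is even and $(-1)^{pq}=(-1)^m$. For a cocycle $\alpha$ the relation becomes
\begin{equation*}
\delta \xi_i(\alpha) = (-1)^{i}\xi_{i-1}(\alpha) + \chi\, a^*\xi_{i-1}(\alpha),
\end{equation*}
where $\chi=\varepsilon(-1)^m$ and $\xi_i$ is taken in degree $d-1-i$. I will compare this with the descent equations for a cocycle in $C^{d-1}_{C_2}(\Sigma;\mathcal R^\chi)$ with total degree $d-1$ even:
\begin{equation*}
\delta\xi_i + \bigl((-1)^{i-1}-\chi a^*\bigr)\xi_{i-1}=0.
\end{equation*}
The two agree after an overall sign, so it may be necessary to rescale $\xi_i$ by a sign $s_i$ with $s_i/s_{i-1}=\pm1$ chosen consistently. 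I will fix such a normalization, possibly adjusting Definition~\ref{def:self_pairing_xi} by harmless signs $(-1)^{\binom{i}{2}}$ if the bookkeeping demands it. The top condition $(1-\chi a^*)\xi_{d-1}=0$ involves $\xi_{d-1}=a^*\alpha\smile_{d-1}^\Omega\alpha$. This is the $i=d$ instance of the same relation, since $\smile_d$ vanishes in degree $d-1$. That gives (1) for cocycles.

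For (2) and the coboundary half of (1), write $\beta=\delta\gamma$. I plan to build an explicit primitive $\Theta=(\theta_0,\theta_1,\dots)$ with
\begin{equation*}
\theta_i = \pm\, a^*\gamma\smile_i^\Omega\alpha \pm a^*\alpha\smile_i^\Omega\gamma \pm a^*\gamma\smile_i^\Omega\delta\gamma,
\end{equation*}
that is, the polarization $\Xi(\alpha+\delta\gamma)-\Xi(\alpha)$ expanded by sesquilinearity. Applying Proposition~\ref{prop:P_val_Steenrod} with mixed degrees $(m-1,m)$ and $(m,m-1)$ shows that $\delta_{\Tot}\Theta$ equals that difference. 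The statement that $\Xi(\delta\gamma)$ is a coboundary is the special case $\alpha=0$.

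For (3), pair $\Gamma$ with $\Xi(\alpha)$. Using the descent equations \eqref{eq:gamma_homological_descent} for $\Gamma$ together with the cocycle relations for $\Xi$, all terms telescope except $\langle \gamma_0,\xi_0\rangle$ and its twisted partner. Concretely, $\langle (1+(-1)^d a_*)\gamma_0, \xi_0\rangle=\int \xi_0$, and by \eqref{eq:push_pull_conjugate} the $a_*$ part is $-\overline{\langle\gamma_0,a^*\xi_0\rangle}$. This yields $\int a^*\alpha\smile^\Omega\alpha = \langle\Gamma,\Xi\rangle-\chi\overline{\langle\Gamma,\Xi\rangle}$ once the telescoping of the intermediate $\gamma_i$ terms is carried out.

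For (4), polarize (3) in $\alpha,\beta$. When $\chi=-1$, the scalar part of $\overline{x}$ equals that of $x$. The left side becomes $\int a^*\alpha\smile^\Omega\beta+\int a^*\beta\smile^\Omega\alpha$, and by Corollary~\ref{cor:cup_symmetry} with $\varepsilon(-1)^{m^2}(-1)^d=-\chi=1$ this equals $2(\int a^*\alpha\smile^\Omega\beta)_0$ in scalar part. This only gives twice the identity, which is useless when $2$ is not invertible. So instead of polarizing, I will repeat the telescoping of (3) directly on the cross terms $\Xi(\alpha+\beta)-\Xi(\alpha)-\Xi(\beta)$. The $\smile_1$ terms relate $a^*\alpha\smile\beta$ to $a^*(a^*\beta\smile\alpha)$, so only one copy survives against $\gamma_0+a_*\gamma_0$.

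The main obstacle is the sign bookkeeping in (1) and (3)/(4). This includes matching $(-1)^{p+q+i}$ with the $D^i_\chi$ conventions, and making the boundary terms telescope to exactly one copy rather than two. I would first check $d=1$ and $d=3$ by hand to fix the sign normalization.
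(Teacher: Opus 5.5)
\textbf{Parts that match the paper.} Your handling of the cocycle half of (1), of (3), and of (4) is essentially the paper's argument. In (4) you telescope directly on the cross terms rather than polarizing, which would only give twice the identity; that is what the paper does too. One small correction: for $p=q=m$ the Steenrod relation and the descent equation in total degree $d-1$ coincide exactly, both reading $\delta\xi_i=((-1)^i+\chi a^*)\xi_{i-1}$. So no renormalization of the $\xi_i$ is needed. None would be permitted either, since the lemma concerns the specific $\Xi$ of Definition~\ref{def:self_pairing_xi}, whose pairing with $\Gamma$ later defines $q$.

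\textbf{The gap: the coboundary half of (1).} This also affects (2) as you organize it. Setting $\alpha=0$ in your ansatz leaves $\theta_i=\pm a^*\gamma\smile_i^\Omega\delta\gamma$, and this primitive cannot work. Proposition~\ref{prop:P_val_Steenrod} for the pair $(\gamma,\delta\gamma)$ in degrees $(m-1,m)$ gives
\[
\delta\bigl(a^*\gamma\smile_i^\Omega\delta\gamma\bigr)=\xi_i(\delta\gamma)-(-1)^i\,a^*\gamma\smile_{i-1}^\Omega\delta\gamma-\varepsilon\,a^*\bigl(a^*\delta\gamma\smile_{i-1}^\Omega\gamma\bigr).
\]
The last term has its factors in the opposite order. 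The $i=0$ component forces $\theta_0=a^*\gamma\smile^\Omega\delta\gamma$. Taking $\theta_1=a^*\gamma\smile_1^\Omega\delta\gamma$, one finds
\[
(\delta_{\Tot}\Theta)_1-\xi_1(\delta\gamma)=-\varepsilon\,a^*\delta\bigl(a^*\gamma\smile^\Omega\gamma\bigr).
\]
This is nonzero for generic $\gamma$. It is the coboundary of $\varepsilon\,a^*(a^*\gamma\smile^\Omega\gamma)$, a term quadratic in $\gamma$ alone that your ansatz does not contain and no choice of signs can produce.

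\textbf{The fix.} Rewriting $a^*\delta\gamma\smile_{i-1}^\Omega\gamma$ requires the Steenrod relation for $(\gamma,\gamma)$ in degrees $(m-1,m-1)$. You never invoke it; you only use the mixed degrees. The correct primitive, which is the paper's, is
\[
\theta_i=a^*\gamma\smile_i^\Omega\delta\gamma+\varepsilon\,a^*\bigl(a^*\gamma\smile_{i-1}^\Omega\gamma\bigr),
\]
and it satisfies $\Xi(\delta\gamma)=\delta_{\Tot}\Theta$.

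\textbf{The cross terms in (2).} This part of your ansatz is fine. For a cocycle $\alpha$, set $\phi_i=a^*\gamma\smile_i^\Omega\alpha+(-1)^m a^*\alpha\smile_i^\Omega\gamma$. Then $\delta_{\Tot}\Phi=\Xi(\alpha+\delta\gamma)-\Xi(\alpha)-\Xi(\delta\gamma)$. This is how the paper proves (2), after first reducing to the cross terms via (1). So once the missing term is added, your one-step primitive for (2) also works.
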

\begin{proof}
(1) Consider \eqref{eq:P_valued_Steenrod} for $p =q= \frac{d-1}{2}$ and $\alpha=\beta$. Assuming that $\alpha$ is a cocycle, the identity reduces to
\begin{equation}
    \delta \left( a^* \alpha \smile_i^\Omega \alpha \right) = ((-1)^i + \varepsilon (-1)^p a^*) (a^* \alpha \smile_{i-1}^\Omega \alpha).
\end{equation}
Setting $\chi = \varepsilon (-1)^{\frac{d-1}{2}}$, this becomes
\begin{equation}
    \delta \xi_i(\alpha) + (-1)^{i-1} D^{i-1}_\chi \xi_{i-1}(\alpha)=0,
\end{equation}
or equivalently $\delta_\Tot \Xi (\alpha) = 0$. 

Next, suppose that $\alpha= \delta \theta$. Set $\Xi'(\theta) = (\xi_0'(\theta), \xi_1'(\theta),\dots)$, where
\begin{equation}
 \xi_i'(\theta) = a^* \theta \smile_i^\Omega \delta \theta + \varepsilon a^*( a^* \theta \smile_{i-1}^\Omega \theta).
\end{equation}
Applying \eqref{eq:P_valued_Steenrod} repeatedly one verifies that
\(
    \Xi(\delta \theta) = \delta_\Tot \Xi'(\theta).
\)

(2) By (1), it is enough to show that $\Xi(\alpha+\beta)-\Xi(\alpha)-\Xi(\beta)$ is a coboundary. We have
\begin{equation}
    \xi_i(\alpha+\beta) - \xi_i(\alpha)-\xi_i(\beta) = a^* \alpha \smile_i^\Omega \beta + a^* \beta \smile_i^\Omega \alpha.
    \label{eq:Xi_lineator}
\end{equation}
Choose $\theta$ such that $\beta = \delta \theta$. One verifies using \eqref{eq:P_valued_Steenrod} that the right hand side of \eqref{eq:Xi_lineator} equals $\delta_\Tot \Phi$, where $\Phi = (\phi_0, \phi_1,\dots)$ is given by
\begin{equation}
    \phi_i = a^* \theta \smile_i^\Omega \alpha + (-1)^{\frac{d-1}{2}} a^* \alpha \smile_i^\Omega \theta.
\end{equation}

(3) The cocycle equation satisfied by $\Xi( \alpha)$ is equivalent to equations
\begin{equation}
\chi a^* \xi_i(\alpha) = (-1)^i \xi_i(\alpha) + \delta \xi_{i+1}(\alpha),
\label{eq:cohomological_descent_self_pairing_case}
\end{equation}
where we adopt the convention that $\xi_i(\alpha) =0$ for $i<0$ and for $i> \frac{d-1}{2}$. Now we observe the following chain of equalities:
\begin{align}
    \langle [S^{d-1}], a^* \alpha \smile^\Omega \alpha \rangle &= \langle (1-a_*) \gamma_0, \xi_0(\alpha) \rangle = \langle \gamma_0 , \xi_0(\alpha) \rangle - \overline{\langle \gamma_0 , a^* \xi_0(\alpha) \rangle} \nonumber \\
    & = \langle \gamma_0 , \xi_0(\alpha) \rangle - \chi \overline{\langle \gamma_0, \xi_0(\alpha) \rangle} - \chi \overline{\langle \partial \gamma_0 , \xi_1(\alpha) \rangle} ,
\end{align}
where in the passage to the second line we used \eqref{eq:cohomological_descent_self_pairing_case} for $i=0$ and the adjointness of $\delta$ and $\partial$. In subsequent steps, we express $\partial \gamma_i$ in terms of $\gamma_{i+1}$ using \eqref{eq:gamma_homological_descent} and $a^* \xi_i(\alpha)$ in terms of $\xi_i(\alpha),\delta \xi_{i+1}(\alpha)$ using \eqref{eq:cohomological_descent_self_pairing_case}:
\begin{align}
\langle \partial \gamma_i , \xi_{i+1} (\alpha)\rangle &= (-1)^i \langle \gamma_{i+1}, \xi_{i+1}(\alpha) \rangle + \overline{\langle \gamma_{i+1}, a^* \xi_{i+1} (\alpha) \rangle } \\
& = (-1)^i \left( \langle \gamma_{i+1}, \xi_{i+1}(\alpha) \rangle - \chi \overline{\langle \gamma_{i+1}, \xi_{i+1}(\alpha) \rangle} \right)  \nonumber  \\
& +  \chi \overline{\langle \partial \gamma_{i+1}, \xi_{i+2}(\alpha) \rangle}. \nonumber
\end{align}
Iterating this step, which is easily formalized as an induction argument, we obtain \eqref{eq:self-pairing-divide-by-two}.

(4) The proof follows similar steps as for (3).
\end{proof}

\begin{proof}[Proof of Theorem \ref{thm:intro-quadratic} (a)]
    Recall the description of $A=\widetilde \Tor_{\frac{d-1}{2}}(\widehat R,P)$ as a subquotient of the cellular cochain module $C^{\frac{d-1}{2}}(\Sigma;\mathcal P)$, as discussed in the proof of Proposition \ref{prop:cup_pairing} (including the special case $d=1$). Recall also that $\operatorname{Br}_{\frac{d-1}{2}}$ was defined as the map induced on this subquotient by the cup product pairing (with $p = \frac{d-1}{2}$) introduced in Definition \ref{def:smile_Omega_pairing}. It follows immediately that the bilinear form $b$, defined as the scalar part of $\operatorname{Br}_{\frac{d-1}{2}}$, is induced in the same way from the $k$-bilinear form on $C^{\frac{d-1}{2}}(\Sigma;\mathcal P)$ given by
    \begin{equation}
        (\alpha,\beta) \mapsto \left( \int a^* \alpha \smile^\Omega \beta \right)_0.
    \end{equation}
   Since we are interested in the pairing induced on cohomology, it is enough to consider cocycles $\alpha,\beta$. Then \eqref{eq:self-pairing-divide-by-two} gives
    \begin{equation}
         \left( \int a^* \alpha \smile^\Omega \alpha \right)_0 = (1- \chi) \langle \Gamma, \Xi(\alpha) \rangle_0.
    \end{equation}
    If $\chi=1$, this readily implies that $b$ is alternating.

    Now suppose that $\chi = -1$. The symmetry of $b$ is a special case of Corollary \ref{cor:cup_symmetry}. Alternatively, it can be deduced from \eqref{eq:xi_refinement_identity_self_pair}. We will construct a quadratic refinement of $b$. Lemma \ref{lem:self-pairing-Xi} (a), (b) combined with the identity \eqref{eq:tot_pairing} imply that the map 
    \begin{equation}
     \alpha \longmapsto \langle \Gamma, \Xi(\alpha) \rangle_0 \in k
     \label{eq:Gamma_Xi_0}
    \end{equation}
    induces a function $H^{\frac{d-1}{2}}(\Sigma;\mathcal P) \longrightarrow k$. We have $H^{\frac{d-1}{2}}(\Sigma;\mathcal P) \cong \widetilde \Tor_{\frac{d-1}{2}}(\widehat R,P)$ if $d \neq 1$. In the case $d=1$, we have to separately check that \eqref{eq:Gamma_Xi_0} descends to $\widetilde \Tor_0(\widehat R,P)$. Let us note that in this case, $\chi=-1$ implies $\varepsilon=-1$. We consider $\alpha \in C^0(\Sigma;\mathcal P)$ which satisfies $
    \Omega(\cdot, \int \alpha) =0$. Let $p \in P$. Then 
    \begin{equation}
\langle \Gamma,\Xi(\alpha+\Delta(p)) \rangle = \langle \gamma_0,      a^*(  \alpha + \Delta(p)) \smile^\Omega (\alpha + \Delta(p)) \rangle 
    \end{equation}
    Let us first consider the mixed terms in this expression. We have
    \begin{align}
        \langle \gamma_0 ,  a^* \Delta(p) \smile^\Omega \alpha + a^* \alpha \smile^\Omega \Delta(p) \rangle_0 &= \langle \gamma_0 ,(1-a^*)(a^* \Delta(p) \smile^\Omega \alpha) \rangle_0 \nonumber  \\
        & = \langle [S^{d-1}], a^* \Delta(p) \smile^\Omega \alpha \rangle_0  = \Omega \left(p,\int \alpha \right)_0  =0. 
    \end{align}
    Next, consider the term quadratic in $\Delta(p)$:
    \begin{equation}
        \langle \gamma_0, \Delta(p) \smile^\Omega \Delta(p) \rangle_0 = \Omega(p,p)_0=0.
    \end{equation}
    This is the only step in which we used the assumption that the scalar part of $\Omega$ is alternating. We have proved that
    \begin{equation}
        \langle \gamma_0,      a^*(  \alpha + \Delta(p)) \smile^\Omega (\alpha + \Delta(p)) \rangle_0 = \langle \gamma_0 , a^* \alpha \smile^\Omega \alpha \rangle_0,
    \end{equation}
    and therefore $\langle \Gamma, \Xi \big(\alpha+\Delta(p) \big)\rangle_0 = \langle \Gamma, \Xi(\alpha) \rangle_0$. This completes the proof that \eqref{eq:Gamma_Xi_0} defines a~function $q \colon \widetilde \Tor_{\frac{d-1}{2}}(\widehat R,P) \longrightarrow k$, including the case $d=1$. The property $q(mu)=m^2 q(u)$ is obvious, and $q(u+v)-q(u)-q(v)=b(u,v)$ follows from the cochain-level identity \eqref{eq:xi_refinement_identity_self_pair}.
\end{proof}

\subsection{Construction for Lagrangian submodules} \label{sec:quadratic_Lagrangian}
We consider a duality pair $(Q,L)$ as in Section \ref{sec:Lagrangian}. We~will assume that $d$ is even throughout. Then the form $\operatorname{Br}_p^L$ in the middle degree $p = \frac{d}{2}$ becomes a self-pairing of the module $A = \widetilde \Tor_{\frac{d}{2}}(\widehat R,Q)$. We study the $k$-bilinear form $b^L$ given by the scalar part of $\operatorname{Br}_{\frac{d}{2}}^L$. Depending on the sign $\chi = \varepsilon (-1)^{\frac{d}{2}}$, this bilinear form is either alternating or symmetric with a~distinguished quadratic refinement~$q^L$.

\begin{defn} \label{def:Lagrangian_xi}
For $\alpha \in C^{\frac{d-2}{2}}(\Sigma;\mathcal P)$ we set
\begin{align}
 \Xi(\alpha) &= (\xi_0(\alpha),\xi_1(\alpha),\dots), \\
\xi_i(\alpha) &= a^* \alpha \smile_i^\Omega \delta \alpha + \varepsilon a^* (a^* \alpha \smile_{i-1}^\Omega \alpha).
\end{align}
\end{defn}

\begin{defn}
   Recall the short exact sequence \eqref{eq:Lagrangian-short-exact-sequence}. It induces a short exact sequence of complexes
   \(
    0 \to C^\bullet(\Sigma;\mathcal L) \longrightarrow C^\bullet(\Sigma;\mathcal P) \longrightarrow C^\bullet(\Sigma ; \mathcal Q) \to 0.   
   \)
   
    Let $\alpha \in C^p(\Sigma;\mathcal P)$. We say that $\alpha$ is $L$-valued if $\alpha \in C^p(\Sigma;\mathcal L)$. We call $\alpha$ a~cocycle mod $L$ if $\delta \alpha $ is $L$-valued. We~call $\alpha$ a coboundary mod $L$ if $\alpha - \delta \theta$ is $L$-valued for some cochain $\theta $. These two conditions are equivalent to $\alpha$ being a lift of a cocycle (respectively, a coboundary) in~$C^p(\Sigma;\mathcal Q)$.  
\end{defn}

\begin{lem} \label{lem:Lagrangian_cup_i}
    If $\alpha,\beta$ are $L$-valued cochains, then $\alpha \smile_i^\Omega \beta =0$ for all $i$. 
\end{lem}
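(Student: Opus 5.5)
The statement needs care about what $\alpha \smile_i^\Omega \beta$ means. By Definition~\ref{def:higher_smile_Omega_product} the product is always formed as $a^*\alpha \smile_i^\Omega \beta$, so the claim is that $a^*\alpha \smile_i^\Omega \beta = 0$ whenever $\alpha,\beta \in C^\bullet(\Sigma;\mathcal L)$. The plan is to reduce to simple tensors and then use that $L$ is isotropic.

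Every element of $C^p(\Sigma;\mathcal L) = C^p(\Sigma;\mathcal R)\otimes L$ is a finite sum of simple tensors $f\otimes \ell$ with $f\in C^p(\Sigma;\mathcal R)$ and $\ell \in L$. This uses that $C^p(\Sigma;\mathcal R)$ is flat. Flatness gives that $C^p(\Sigma;\mathcal L)\to C^p(\Sigma;\mathcal P)$ is injective, so an $L$-valued cochain really is an element of the tensor product with $L$, and its image in $C^p(\Sigma;\mathcal P)$ is a sum of images of such simple tensors.

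The product $(\alpha,\beta)\mapsto a^*\alpha\smile_i^\Omega\beta$ is biadditive on $C^p(\Sigma;\mathcal P)\times C^q(\Sigma;\mathcal P)$, since it is defined by sesquilinear extension from simple tensors. It therefore suffices to treat $\alpha = f\otimes \ell$ and $\beta = g\otimes \ell'$ with $\ell,\ell'\in L$. In that case
\[
a^*\alpha\smile_i^\Omega\beta = \Omega(\ell,\ell')\, a^*f\smile_i g ,
\]
and $\Omega(\ell,\ell')=0$ because $L\subset L^\perp$, i.e.\ $L$ is isotropic.

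The main point to watch is well-definedness. The product is defined on $C^\bullet(\Sigma;\mathcal P)$ through the tensor structure over $P$. Consequently, restricting it to the image of $C^\bullet(\Sigma;\mathcal L)$ is the same as the pairing on $C^\bullet(\Sigma;\mathcal L)$ induced by $\Omega|_{L\times L}=0$. This holds by naturality of the sesquilinear extension with respect to the inclusion $\iota\colon L\to P$, which finishes the proof.
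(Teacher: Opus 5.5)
Your argument is correct and is the intended one. The paper states this lemma without proof because it follows at once from the biadditivity of $\smile_i^\Omega$, its value $\Omega(\ell,\ell')\,a^*f\smile_i g$ on simple tensors, and the isotropy $\Omega(L,L)=0$. One small correction: writing an element of $C^p(\Sigma;\mathcal R)\otimes L$ as a finite sum of simple tensors needs no flatness. Flatness of $C^p(\Sigma;\mathcal R)$ is used only to make $C^p(\Sigma;\mathcal L)\to C^p(\Sigma;\mathcal P)$ injective, so that ``$L$-valued'' is meaningful.
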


\begin{lem} \label{lem:Lagrangian-Xi}
    Let $\alpha, \beta \in C^{\frac{d-2}{2}}(\Sigma;\mathcal P)$ and set $\chi = \varepsilon (-1)^{\frac{d}{2}}$. Viewing $\Xi(\alpha), \Xi(\beta)$ as cochains in $C^{d-1}_{C_2}(\Sigma;\mathcal R^\chi)$, the following statements hold.
    \begin{enumerate}
        \item If $\alpha$ is a cocycle mod $L$ and $\beta$ is $L$-valued, then $\Xi(\alpha+\beta)-\Xi(\alpha)$ is a $\delta_\Tot$-coboundary.
        \item If $\alpha$ is a cocycle mod $L$, then $\Xi(\alpha)$ is a~$\delta_\Tot$-cocycle. If $\alpha$ is a coboundary mod $L$, then $\Xi(\alpha)$ is a~$\delta_\Tot$-coboundary.
        \item If $\alpha$ and $\beta$ are a cocycle and a coboundary mod $L$, respectively, then $\Xi(\alpha+\beta)-\Xi(\alpha)$ is a $\delta_\Tot$-coboundary. 
        \item If $\alpha$ is a cocycle mod $L$, then
        \begin{equation}
            \int a^* \alpha \smile^\Omega \delta \alpha = \langle \Gamma, \Xi( \alpha) \rangle  + \chi \overline{\langle \Gamma,\Xi(\alpha) \rangle}.
        \label{eq:Lagrangian-divide-by-two}
        \end{equation}
        \item If $\chi =1$ and $\alpha,\beta$ are cocycles mod $L$, then
        \begin{equation}
            \left( \int a^*\alpha \smile^\Omega \delta \beta \right)_0 = \langle \Gamma , \Xi(\alpha+\beta) - \Xi(\alpha)-\Xi(\beta) \rangle_0.
        \end{equation}
    \end{enumerate}
\end{lem}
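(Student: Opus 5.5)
The plan is to reduce every item to the Steenrod-type identity \eqref{eq:P_valued_Steenrod} of Proposition~\ref{prop:P_val_Steenrod}, applied with $p=q=\frac{d-2}{2}$, combined with the vanishing $\alpha\smile_i^\Omega\beta=0$ for $L$-valued cochains (Lemma~\ref{lem:Lagrangian_cup_i}). The latter holds because $\Omega(L,L)=0$ and the pairing is computed on simple tensors. The cochain $\Xi(\alpha)$ of Definition~\ref{def:Lagrangian_xi} is exactly the cochain $\Xi'(\theta)$ from the proof of Lemma~\ref{lem:self-pairing-Xi}(1), now with $\theta=\alpha$ of degree $\frac{d-2}{2}$. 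The same computation as there then gives the $\delta_\Tot$-identity
\[
\delta_\Tot \Xi(\alpha) = \bigl(a^*\delta\alpha\smile_{i}^\Omega\delta\alpha\bigr)_{i\ge 0},
\]
up to the sign conventions fixed by $\chi=\varepsilon(-1)^{d/2}$. In other words, $\delta_\Tot\Xi(\alpha)=\Xi^{\mathrm{self}}(\delta\alpha)$, the self-pairing cochain attached to the cocycle $\delta\alpha$ in degree $\frac d2$. I would verify this first, by expanding $\delta\xi_i$ with \eqref{eq:P_valued_Steenrod} twice and checking the telescoping against the descent equations for $C^{d-1}_{C_2}(\Sigma;\mathcal R^\chi)$.

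With this identity in hand, item (2) follows quickly. If $\alpha$ is a cocycle mod $L$, then $\delta\alpha$ is $L$-valued, so every term $a^*\delta\alpha\smile_i^\Omega\delta\alpha$ vanishes by Lemma~\ref{lem:Lagrangian_cup_i}, and $\Xi(\alpha)$ is a cocycle. For item (1), I would expand $\Xi(\alpha+\beta)-\Xi(\alpha)$ into mixed terms and terms quadratic in $\beta$. The quadratic terms $\Xi(\beta)$ vanish since $\beta$ and $\delta\beta$ are both $L$-valued. The mixed terms $a^*\alpha\smile_i^\Omega\delta\beta+a^*\beta\smile_i^\Omega\delta\alpha$ plus the corresponding $\smile_{i-1}$ terms should equal $\delta_\Tot\Phi$ with
\[
\phi_i = \pm\, a^*\beta\smile_i^\Omega\alpha \pm a^*\alpha\smile_i^\Omega\beta,
\]
modulo terms killed by Lemma~\ref{lem:Lagrangian_cup_i}. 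This mirrors $\Phi$ in Lemma~\ref{lem:self-pairing-Xi}(2), and the signs are to be fixed by matching \eqref{eq:P_valued_Steenrod}.

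For the coboundary half of (2), write $\alpha=\delta\theta+\ell$ with $\ell$ $L$-valued. By (1) it suffices to treat $\alpha=\delta\theta$. Then $\delta\alpha=0$, so $\xi_i(\delta\theta)=\varepsilon a^*(a^*\delta\theta\smile_{i-1}^\Omega\delta\theta)$, which is essentially the shifted self-pairing cochain of the coboundary $\delta\theta$. I would therefore reuse the primitive $\Xi'$ from Lemma~\ref{lem:self-pairing-Xi}(1), shifted by one index, to exhibit it as a coboundary. Item (3) then follows from (1), (2) and bilinear expansion exactly as in Lemma~\ref{lem:self-pairing-Xi}(2).

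For items (4) and (5), I would repeat the telescoping argument of Lemma~\ref{lem:self-pairing-Xi}(3). Write the cocycle condition as $\chi a^*\xi_i=\pm\xi_i+\delta\xi_{i+1}$, with signs now governed by the cochain degree $d-1$ being odd, which is why $+\chi$ appears in place of $-\chi$. Then:
\begin{enumerate}[(i)]
\item Start from $\langle[S^{d-1}],\xi_0\rangle=\langle(1+a_*)\gamma_0,\xi_0\rangle$, using $(-1)^d=1$.
\item Convert $a_*$ into $\overline{a^*}$ via \eqref{eq:push_pull_conjugate}.
\item Trade $\partial\gamma_i$ for $\gamma_{i+1}$ using \eqref{eq:gamma_homological_descent}.
\end{enumerate}
Note that $\xi_0=a^*\alpha\smile^\Omega\delta\alpha$ is precisely the integrand. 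Item (5) is the polarization of (4): the cross term $\Xi(\alpha+\beta)-\Xi(\alpha)-\Xi(\beta)$ pairs with $\Gamma$ to give $\int a^*\alpha\smile^\Omega\delta\beta$ plus $\chi$ times the conjugate of $\int a^*\beta\smile^\Omega\delta\alpha$. Integration by parts, Corollary~\ref{cor:cup_symmetry}-style symmetry, and $\chi=1$ identify the scalar parts of these two terms, giving the claim.

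The main obstacle is sign bookkeeping in the identity $\delta_\Tot\Xi(\alpha)=\Xi^{\mathrm{self}}(\delta\alpha)$ and in the homotopy $\Phi$. I would fix the signs by first checking the case $d=2$, where only $\xi_0,\xi_1$ occur.
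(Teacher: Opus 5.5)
Your items (1)--(4) follow the paper's route: \eqref{eq:P_valued_Steenrod} together with Lemma~\ref{lem:Lagrangian_cup_i}, then the telescoping of Lemma~\ref{lem:self-pairing-Xi}(3). Your identity $\delta_\Tot\Xi(\alpha)=\bigl(a^*\delta\alpha\smile_i^\Omega\delta\alpha\bigr)_i$ holds exactly, with no extra signs.

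Your shift argument for the coboundary half of (2) is also sound, for three reasons:
\begin{enumerate}
\item the self-pairing sign attached to $\delta\theta$ is $\varepsilon(-1)^{\frac{d-2}{2}}=-\chi$;
\item the reindexing $(\xi'_0,\xi'_1,\dots)\mapsto(0,\xi'_0,\xi'_1,\dots)$ is a chain map $C^\bullet_{C_2}(\Sigma;\mathcal R^{-\chi})\to C^{\bullet+1}_{C_2}(\Sigma;\mathcal R^{\chi})$ commuting with $a^*$;
\item $\varepsilon$ times the shift of $a^*\Xi'(\theta)$ is exactly the paper's primitive $\phi_i=\varepsilon a^*(a^*\theta\smile_{i-1}^\Omega\delta\theta)+a^*\theta\smile_{i-2}^\Omega\theta$.
\end{enumerate}

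Two small corrections:
\begin{enumerate}
\item In (1) the primitive is only $\phi_i=(-1)^{\frac{d-2}{2}}a^*\alpha\smile_i^\Omega\beta$. A summand $a^*\beta\smile_i^\Omega\alpha$ would produce $a^*\delta\beta\smile_i^\Omega\alpha$, which is not killed.
\item (3) needs its own primitive, $\phi_i=a^*\theta\smile_i^\Omega\delta\alpha+\varepsilon a^*\bigl(a^*\theta\smile_{i-1}^\Omega\alpha+(-1)^{\frac{d-2}{2}}a^*\alpha\smile_{i-1}^\Omega\theta\bigr)$, not the one from Lemma~\ref{lem:self-pairing-Xi}(2).
\end{enumerate}

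\textbf{The genuine gap is in (5).} Put $B=\Xi(\alpha+\beta)-\Xi(\alpha)-\Xi(\beta)$. Polarizing (4) gives $\int a^*\alpha\smile^\Omega\delta\beta+\int a^*\beta\smile^\Omega\delta\alpha=\langle\Gamma,B\rangle+\chi\overline{\langle\Gamma,B\rangle}$. Identifying the two scalar parts by symmetry then only yields $2\,b^L(u,v)=2\langle\Gamma,B\rangle_0$. Dividing by $2$ is impossible for $k=\mathbb Z/n$ with $n$ even, and avoiding that division is the whole point of the construction.

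The identity you literally assert, $\langle\Gamma,B\rangle=\int a^*\alpha\smile^\Omega\delta\beta+\chi\overline{\int a^*\beta\smile^\Omega\delta\alpha}$, is in fact false. Combined with the symmetry, it would give $\langle\Gamma,B\rangle_0=2b^L(u,v)$. Taking $\beta=\alpha$ and using (4) would then force $b^L(u,u)=0$, i.e.\ $b^L$ alternating even when $\chi=1$, which fails in general.

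The missing idea is to split the cross term asymmetrically, $B=Y+Z$, with
\[
Y_i=a^*\alpha\smile_i^\Omega\delta\beta+\varepsilon a^*\bigl(a^*\beta\smile_{i-1}^\Omega\alpha\bigr),\qquad
Z_i=a^*\beta\smile_i^\Omega\delta\alpha+\varepsilon a^*\bigl(a^*\alpha\smile_{i-1}^\Omega\beta\bigr).
\]
Neither half is a cocycle, but \eqref{eq:P_valued_Steenrod} and Lemma~\ref{lem:Lagrangian_cup_i} give $\chi a^*Z_i=(-1)^iY_i-\delta Y_{i+1}$. Write $\langle\gamma_i,Z_i\rangle=\chi\overline{\langle a_*\gamma_i,\chi a^*Z_i\rangle}$, and use $a_*\gamma_0=[S^{d-1}]-\gamma_0$ and $a_*\gamma_{i+1}=\partial\gamma_i+(-1)^i\gamma_{i+1}$. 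The same telescoping then gives
\[
\langle\Gamma,B\rangle=\langle\Gamma,Y\rangle-\chi\overline{\langle\Gamma,Y\rangle}+\chi\overline{\int a^*\alpha\smile^\Omega\delta\beta}.
\]
For $\chi=1$ the first two scalar parts cancel. What remains is exactly one copy of $\bigl(\int a^*\alpha\smile^\Omega\delta\beta\bigr)_0$, with no symmetry argument and no division needed.
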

\begin{proof}
    (1) Using \eqref{eq:P_valued_Steenrod} repeatedly we find that $\Xi(\alpha+\beta)-\Xi(\alpha) = \delta_\Tot (\phi_0,\phi_1,\dots )$, where
    \begin{equation}
        \phi_i = (-1)^{\frac{d-2}{2}} a^* \alpha \smile_{i}^\Omega \beta.
    \end{equation}
    
    (2) If $\alpha$ is a cocycle mod $L$, then using \eqref{eq:P_valued_Steenrod} and Lemma \ref{lem:Lagrangian_cup_i} we verify the relation
    \begin{equation}
    \delta \xi_i (\alpha) = ((-1)^{i+1} - \chi a^*) \xi_{i-1}(\alpha).
    \end{equation}
    This is equivalent to $\Xi(\alpha)$ being a total cocycle. Next, suppose that $\alpha$ is a~coboundary mod $L$. By (1), to show that $\Xi(\alpha)$ is a $\delta_\Tot$-coboundary it is enough to consider $\alpha$ which is a coboundary (rather than just a coboundary mod~$L$). Let $\alpha = \delta \theta$. We verify that 
    \begin{align}
        \xi_i(\delta \theta)& = \delta \phi_i +(-1)^{i-1} D^{i-1}_\chi \phi_{i-1}, \\
        \phi_i & = \varepsilon a^* (a^* \theta \smile_{i-1}^\Omega \delta \theta) + a^* \theta \smile_{i-2}^\Omega \theta, \nonumber
    \end{align}
    which amounts to $\Xi(\alpha) = \delta_\Tot (\phi_0,\phi_1,\dots)$. 

    (3) By (1) we can assume that $\beta = \delta \theta$. By (2), it is enough to show that $\Xi(\alpha+\beta)-\Xi(\alpha)-\Xi(\beta)$ is a coboundary. We~find that 
    $\Xi(\alpha+\delta \theta)-\Xi(\alpha)-\Xi(\delta \theta) = \delta_\Tot (\phi_0,\phi_1,\dots)$, where
    \begin{equation}
        \phi_i = a^* \theta \smile_i^\Omega \delta \alpha + \varepsilon a^* ( a^* \theta \smile_{i-1}^\Omega \alpha + (-1)^{\frac{d-2}{2}} a^* \alpha \smile_{i-1}^\Omega \theta).
    \end{equation}

    (4) and (5) are proved as for the corresponding statements in Lemma \ref{lem:self-pairing-Xi}.
\end{proof}

\begin{proof}[Proof of Theorem \ref{thm:intro-quadratic} (b)]
Set $r=\frac{d-2}{2}$. Recall that the self-pairing on
\(
\operatorname{Br}^{L}_{\frac d2}
\)
is the composition of the cup-product pairing for the duality pair
\((Q,L)\) with the connecting homomorphism. If~$u,v \in A$ are represented by cocycles in
\(C^r(\Sigma;\mathcal Q)\) with lifts
\(\alpha,\beta\in C^r(\Sigma;\mathcal P)\), then 
\begin{equation}
    b^L(u,v)=
    \left(\int a^*\alpha\smile^\Omega\delta\beta\right)_0.
    \label{eq:Lagrangian-middle-scalar-pairing}
\end{equation}
In particular, the role of $P$-valued cocycles in part~(a) is now
played by cocycles mod~$L$.

Let \(\alpha\in C^r(\Sigma;\mathcal P)\) be a cocycle mod~$L$.  Taking the
scalar part of \eqref{eq:Lagrangian-divide-by-two}, we obtain
\begin{equation}
    \left(\int a^*\alpha\smile^\Omega\delta\alpha\right)_0
    = (1+\chi)\langle\Gamma,\Xi(\alpha)\rangle_0.
    \label{eq:Lagrangian-scalar-divide-by-two}
\end{equation}
If \(\chi=-1\), this shows that \(b^L\) is alternating.

Suppose now that \(\chi=1\).  For $u \in A$ represented by $\alpha$, a cocycle mod~$L$, set
\begin{equation}
    q^L(u)=\langle\Gamma,\Xi(\alpha)\rangle_0.
    \label{eq:Lagrangian-quadratic-refinement}
\end{equation}
We first verify that this expression is well-defined in terms of $u$. Changing the lift of a fixed $Q$-valued cocycle
amounts to adding an $L$-valued cochain; invariance under this change follows
from Lemma~\ref{lem:Lagrangian-Xi} (1) and \eqref{eq:tot_pairing}.  Changing the
$Q$-valued cocycle by a coboundary amounts to changing its lift $\alpha$ by a
coboundary mod~$L$; invariance under this change follows from
Lemma~\ref{lem:Lagrangian-Xi} (3) and \eqref{eq:tot_pairing}.  Consequently,
if \(d \neq 2\), formula \eqref{eq:Lagrangian-quadratic-refinement} defines a
function
\begin{equation}
    q^L\colon
    H^r(\Sigma;\mathcal Q)
    \cong
    \widetilde\Tor_{\frac d2}(\widehat R,Q)
    \longrightarrow k.
\end{equation}

It remains to discuss the case \(d=2\). Then \(r=0\), and
\begin{equation}
    \widetilde\Tor_1(\widehat R,Q)
    \cong
    H^0(\Sigma;\mathcal Q)/\operatorname{im}(\Delta).
\end{equation}
Let \(p\in P\).  Since \(\delta\Delta(p)=0\),
Lemma~\ref{lem:Lagrangian-Xi} (5) gives
\begin{equation}
    \langle\Gamma,\Xi(\alpha+\Delta(p))\rangle_0
      =\langle\Gamma,\Xi(\alpha)\rangle_0
      +\langle\Gamma,\Xi(\Delta(p))\rangle_0.
\end{equation}
The only possibly nonzero component of \(\Xi(\Delta(p))\) is
\begin{equation}
    \xi_1(\Delta(p))
    =\varepsilon a^*\bigl(a^*\Delta(p)\smile^\Omega\Delta(p)\bigr).
\end{equation}
Its scalar part vanishes because \(\Omega(p,p)_0=0\). This is the only place where
the additional alternation assumption in dimension two is used.  It follows
that \eqref{eq:Lagrangian-quadratic-refinement} is invariant under the replacement $\alpha \mapsto \alpha + \Delta(p)$, and therefore descends to
\(\widetilde\Tor_1(\widehat R,Q)\).

Finally, Lemma~\ref{lem:Lagrangian-Xi}(5) and
\eqref{eq:Lagrangian-middle-scalar-pairing} give
\begin{equation}
    q^L(u+v)-q^L(u)-q^L(v)=b^L(u,v).
\end{equation}
Moreover, the definition of \(\Xi\) is quadratic in~$\alpha$, and hence
\(
    q^L(mu)=m^2q^L(u)
\)
for every \(m\in k\).  The symmetry of \(b^L\) follows either from this identity or directly from Corollary~\ref{cor:cup_symmetry}.
\end{proof}

\begin{exmp}[T-junction]
Let us assume that $d=2$ and the scalar part of $\Omega$ is alternating. We use the notation of Example \ref{exmp:2d_resol}. We can choose the chain $\Gamma=(\gamma_0,\gamma_1)$ given by
    \begin{equation}
        \gamma_0 = \sigma_{x,y} - \sigma_{\overline x,y}, \qquad \gamma_1 = \sigma_x.
    \end{equation}
Now let $\alpha \in C^0(\Sigma;\mathcal P)$. We find that
\begin{align}
    \langle \Gamma, \Xi(\alpha) \rangle_0 &= \Omega(\alpha_{\overline x},\alpha_y - \alpha_x)_0 - \Omega(\alpha_x,\alpha_y-\alpha_{\overline x})_0 - \Omega(\alpha_x,\alpha_{\overline x})_0 \\
    & = \Omega(\alpha_{\overline x},\alpha_y)_0 + \Omega(\alpha_y,\alpha_x)_0 + \Omega(\alpha_x,\alpha_{\overline x})_0  . \nonumber
\end{align}
In the context of Pauli stabilizer codes, if $\alpha$ is a cocycle mod $L$, it represents an anyon type. The above expression for $\langle \Gamma, \Xi(\alpha) \rangle_0$ reproduces the so-called T-junction formula \cite{levin2003fermions} for the topological spin of the corresponding anyon type.
\end{exmp}

\appendix

\section{Edge maps for an exact double complex} \label{app:double_complex}
Let $C_{\bullet,\bullet}$ be a double complex with horizontal and vertical differentials
\begin{equation}
d_h:C_{p,q}\longrightarrow C_{p-1,q},
\qquad
d_v:C_{p,q}\longrightarrow C_{p,q-1}.
\end{equation}
We assume that the two differentials commute:
\begin{equation}
d_h d_v=d_v d_h.
\end{equation}
Suppose that $C_{\bullet,\bullet}$ is concentrated in the vertical degrees $0\leq q\leq r$, with no boundedness assumption in the horizontal direction.

We denote the bottom and top rows by
\begin{equation}
A_\bullet=C_{\bullet,0},
\qquad
B_\bullet=C_{\bullet,r},
\end{equation}
respectively. We assume that every (vertical) column is exact. In other words, for each $p$ the sequence
\begin{equation}
0\longrightarrow B_p
\longrightarrow C_{p,r-1}
\longrightarrow \cdots
\longrightarrow C_{p,1}
\longrightarrow A_p
\longrightarrow 0
\end{equation}
is exact.

The purpose of this appendix is to describe four natural maps associated with the double complex. Two of them are standard inclusion and projection maps, which are not quasi-isomorphisms in general. The other two are connecting maps, which are quasi-isomorphisms owing to the exactness of the columns. Together, these maps combine to give a useful edge-to-edge map $\kappa$. We discuss conditions under which $\kappa$ becomes an isomorphism. 

\subsubsection*{The total complex}

The total complex is defined by
\begin{equation}
\operatorname{Tot}_n(C)
=
\bigoplus_{p+q=n}C_{p,q}.
\end{equation}
The differential $D$ of the total complex is defined, for $x\in C_{p,q}$, by
\begin{equation}
D(x)=(-1)^q d_h x+d_v x.
\label{eq:total_differential}
\end{equation}
The commutativity of $d_h$ and $d_v$ implies that $D^2=0$.

For a chain complex $K_\bullet$ and an integer $s$, we use the shift convention
\begin{equation}
K[s]_n=K_{n-s},
\end{equation}
with differential
\begin{equation}
d_{K[s]}=(-1)^s d_K.
\label{eq:shifted_differential}
\end{equation}

We will consider two truncations of the double complex. Let $C_{<r}$ denote the double complex obtained by deleting the top row:
\begin{equation}
(C_{<r})_{p,q}
=
\begin{cases}
C_{p,q}, & 0\leq q<r,\\
0, & q=r.
\end{cases}
\end{equation}
Similarly, let $C_{>0}$ be the double complex obtained by deleting the bottom row, with the vertical differential out of the row $q=1$ set equal to zero.

These truncations fit into short exact sequences of chain complexes
\begin{equation}\label{eq: append_SES_1}
0\longrightarrow A
\longrightarrow \operatorname{Tot}(C)
\longrightarrow \operatorname{Tot}(C_{>0})
\longrightarrow 0
\end{equation}
and
\begin{equation}\label{eq: append_SES_2}
0\longrightarrow \operatorname{Tot}(C_{<r})
\longrightarrow \operatorname{Tot}(C)
\longrightarrow B[r]
\longrightarrow 0.
\end{equation}

By~\cite[Lemma 2.7.3]{Weibel}, the exactness of the columns implies that the total complex is acyclic:
\begin{equation}
H_\bullet(\operatorname{Tot}(C))=0.
\end{equation}

\subsubsection*{The inclusion of the bottom row}

The first map we discuss is the evident inclusion
\begin{equation}
\iota_A:A\longrightarrow \operatorname{Tot}(C_{<r}).
\end{equation}
For $a\in A_n=C_{n,0}$, it is given by
\begin{equation}
\iota_A(a)=(a,0,\ldots,0) \in \bigoplus_{q=0}^{r-1} C_{n-q,q}.
\end{equation}
This is a chain map, since $a$ lies in the bottom row, where $d_v a = 0$, giving
\begin{equation}
D\iota_A(a)=d_h a=\iota_A(d_h a).
\end{equation}

In general, $\iota_A$ need not be a quasi-isomorphism. Nevertheless, 
\begin{lem} \label{lem:iota_A}
    If every row of $C_{\bullet,\bullet}$ other than the top and bottom rows is exact, $\iota_A$ is a quasi-isomorphism. 
\end{lem}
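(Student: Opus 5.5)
The plan is to induct on the number of rows, peeling off one row at a time with the short exact sequences of total complexes attached to row truncations. Write $C_{\leq s}$ for the double complex consisting of rows $0,\dots,s$ (rows above $s$ deleted), so that $C_{<r}=C_{\leq r-1}$ and $C_{\leq 0}=A$. For each $1\leq s\leq r-1$ there is a short exact sequence of chain complexes
\begin{equation*}
0\longrightarrow \operatorname{Tot}(C_{\leq s-1})\longrightarrow \operatorname{Tot}(C_{\leq s})\longrightarrow C_{\bullet,s}[s]\longrightarrow 0,
\end{equation*}
exactly analogous to \eqref{eq: append_SES_2}. The first map is the inclusion. The quotient is row $s$, shifted by $s$, whose differential is $(-1)^s d_h$, matching the convention \eqref{eq:shifted_differential} and the sign in \eqref{eq:total_differential}. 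The sequence is degreewise split, and the differential of $\operatorname{Tot}(C_{\leq s})$ restricts correctly because $d_v$ lowers $q$.

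By hypothesis, every row $C_{\bullet,s}$ with $1\leq s\leq r-1$ is exact. Sign changes and shifts do not affect acyclicity, so $C_{\bullet,s}[s]$ is acyclic. The long exact sequence in homology then shows that the inclusion $\operatorname{Tot}(C_{\leq s-1})\to\operatorname{Tot}(C_{\leq s})$ is a quasi-isomorphism. Composing these inclusions for $s=1,\dots,r-1$ gives the inclusion $A=\operatorname{Tot}(C_{\leq 0})\to \operatorname{Tot}(C_{\leq r-1})=\operatorname{Tot}(C_{<r})$, which is exactly $\iota_A$. A composite of quasi-isomorphisms is a quasi-isomorphism, which proves the lemma. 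If $r\leq 1$ there are no middle rows and $\iota_A$ is the identity.

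No horizontal boundedness is needed, since each total degree involves only finitely many summands (at most $r$). Notably, the exactness of the columns is not used here. The only point needing care is checking that the sign conventions make the quotient complex exactly row $s$ up to sign, and that the inclusion of $\operatorname{Tot}(C_{\leq s-1})$ is a chain map. Both are immediate from \eqref{eq:total_differential}: restricted to rows below $s$, the total differential never maps into row $s$.
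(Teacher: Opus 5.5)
The paper states this lemma without proof. It only justifies the acyclicity of $\operatorname{Tot}(C)$, by citing \cite[Lemma 2.7.3]{Weibel}, so there is no argument in the text to compare against. Your proof is correct and complete. The following steps all check out:
\begin{itemize}
\item $\operatorname{Tot}(C_{\leq s-1})$ is a subcomplex of $\operatorname{Tot}(C_{\leq s})$.
\item The quotient is $C_{\bullet,s}[s]$, and the sign $(-1)^s d_h$ matches the shift convention.
\item Composing the resulting quasi-isomorphisms for $s=1,\dots,r-1$ gives exactly $\iota_A$.
\end{itemize}

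A one-step variant also works. $\iota_A$ is injective, and its cokernel is $\operatorname{Tot}$ of the double complex formed by rows $1,\dots,r-1$, with the vertical differential out of row $1$ set to zero, exactly as in $C_{>0}$. That total complex is acyclic by the acyclic assembly lemma with rows and columns exchanged: its rows are exact and there are finitely many of them. The long exact sequence then finishes the proof.

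Your induction is that lemma unrolled for a finite strip. It is self-contained and makes the sign bookkeeping explicit. The one-step version parallels the paper's own citation, and it also gives Lemma~\ref{lem:pi_B} immediately, since the kernel of $\pi_B$ is the same middle-rows total complex.

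Your remark that column exactness plays no role is right. The statements about $\iota_A$ and $\pi_B$ depend only on the rows, while those about $\partial_A$ and $\partial_B$ depend only on the columns.

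One small correction: your aside that $\iota_A$ is the identity for $r\leq 1$ holds only for $r=1$. In the degenerate case $r=0$, $\operatorname{Tot}(C_{<0})=0$, and the claim then needs column exactness, which forces $A=0$. The setup clearly intends $r\geq 1$, as in the application where $r=d+1$.
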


\subsubsection*{The projection onto the top row}

The second map we need is the projection
\begin{equation}
\pi_B:
\operatorname{Tot}(C_{>0})
\longrightarrow
B[r].
\end{equation}
An element $x\in\operatorname{Tot}_n(C_{>0})$ has a unique decomposition
\begin{equation}
x=x_1+x_2+\cdots+x_r,
\end{equation}
where
$x_q\in C_{n-q,q}.$
The projection is defined by
\begin{equation}
\pi_B(x)=x_r.
\end{equation}

It is a chain map. The top-row component of $D x$ is
\(
(-1)^r d_h x_r.
\)
This agrees with the differential on $B[r]$, which is $(-1)^r d_h$. Hence
\begin{equation}
\pi_B D=d_{B[r]}\pi_B.
\end{equation}

We note that in general $\pi_B$ is not a quasi-isomorphism. Nevertheless, 
\begin{lem} \label{lem:pi_B}
    If every row of $C_{\bullet,\bullet}$ other than the top and bottom rows is exact, $\pi_B$ is a quasi-isomorphism. 
\end{lem}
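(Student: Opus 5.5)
The plan is to compare the short exact sequence \eqref{eq: append_SES_2} with a version truncated at the bottom row. Concretely, let $C_{0<q<r}$ be the double complex consisting only of the intermediate rows $1\leq q\leq r-1$. Then there is a short exact sequence of chain complexes
\begin{equation}
0\longrightarrow \operatorname{Tot}(C_{0<q<r})\longrightarrow \operatorname{Tot}(C_{>0})\xrightarrow{\ \pi_B\ } B[r]\longrightarrow 0 .
\end{equation}
Here the first map is the inclusion of the rows $1\leq q\leq r-1$. It is a subcomplex: the only vertical differential leaving row $1$ goes into the bottom row, and in $C_{>0}$ it has been set to zero. The quotient is exactly the top row, with differential $(-1)^r d_h$, so it agrees with $B[r]$ as already computed. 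By the long exact sequence in homology, $\pi_B$ is a quasi-isomorphism if and only if $\operatorname{Tot}(C_{0<q<r})$ is acyclic.

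For that acyclicity I will argue by induction on the number of rows, filtering by the vertical degree. For each $1\leq s\leq r-1$, let $C^{(s)}$ denote the rows with $1\leq q\leq s$; this is again a subcomplex for the same reason as above. The successive quotients $\operatorname{Tot}(C^{(s)})/\operatorname{Tot}(C^{(s-1)})$ are single rows $C_{\bullet,s}$ with differential $(-1)^s d_h$, shifted by $s$. By hypothesis each of these rows is exact. The induction then runs through short exact sequences and the long exact homology sequence: if $\operatorname{Tot}(C^{(s-1)})$ is acyclic and the $s$-th row is exact, then $\operatorname{Tot}(C^{(s)})$ is acyclic. Since there are only finitely many rows ($r-1$ of them), no convergence issues arise, even though the rows are unbounded horizontally. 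This finiteness is the only point where care is needed, because a filtration by columns would be unbounded.

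Combining these steps, $\operatorname{Tot}(C_{0<q<r})$ is acyclic, and hence $\pi_B$ is a quasi-isomorphism. The same argument, using the subcomplex $A\subset\operatorname{Tot}(C_{<r})$ with quotient $\operatorname{Tot}$ of the intermediate rows, also gives Lemma~\ref{lem:iota_A}. I do not expect a genuine obstacle. The main thing to verify is the sign conventions, namely that the intermediate rows really form a subcomplex of $\operatorname{Tot}(C_{>0})$ and that the quotient differential matches \eqref{eq:shifted_differential}.
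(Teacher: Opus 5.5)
Your proof is correct. The intermediate rows form a subcomplex of $\operatorname{Tot}(C_{>0})$ whose quotient is exactly $B[r]$, with differential $(-1)^r d_h$. The finite filtration by rows shows this subcomplex is acyclic, so the long exact sequence gives the claim without using column exactness.

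The paper states this lemma without proof. Your argument is the standard one that its setup of the truncations $C_{>0}$, $C_{<r}$ and the associated short exact sequences implicitly relies on.
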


\subsubsection*{The connecting map to the bottom row}

The short exact sequence~\eqref{eq: append_SES_1} induces a connecting map
\begin{equation}
\partial_A:
\operatorname{Tot}(C_{>0})
\longrightarrow
A[1].
\end{equation}
Let us review its chain-level description. 

For
\(
x=x_1+x_2+\cdots+x_r
\)
with $x_q\in C_{n-q,q}$, define
\begin{equation}
\partial_A(x)=d_v x_1.
\end{equation}
Since
\(
d_v x_1\in C_{n-1,0}=A_{n-1}=A[1]_n,
\)
this defines a degree-zero map to the shifted complex $A[1]$.

\begin{lem} \label{lem:edge_A}
The map
\begin{equation}
\partial_A:
\operatorname{Tot}(C_{>0})
\longrightarrow
A[1]
\end{equation}
is a quasi-isomorphism.
\end{lem}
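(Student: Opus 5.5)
The plan is to deduce this from the short exact sequence \eqref{eq: append_SES_1} and the acyclicity of $\operatorname{Tot}(C)$, rather than by a direct diagram chase. First I will check that $\partial_A$ is the chain-level connecting map of \eqref{eq: append_SES_1}. Take a cycle $x=x_1+\cdots+x_r$ in $\operatorname{Tot}(C_{>0})$. Lift it to $\operatorname{Tot}(C)$ by zero in the bottom row. Apply $D$: since $x$ is a cycle in $\operatorname{Tot}(C_{>0})$, every component of $Dx$ in rows $q\geq 1$ vanishes. The only surviving component is the one landing in row $0$, namely $d_v x_1\in C_{n-1,0}$. So $D(\text{lift})=\iota(d_v x_1)$, and $\partial_A$ agrees with the connecting homomorphism on homology.

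Second, I will verify that $\partial_A$ is a genuine chain map $\operatorname{Tot}(C_{>0})\to A[1]$, which needs a sign check. The differential on $A[1]$ is $-d_h$. For $x$ as above, the row-$1$ component of $D_{C_{>0}}x$ is $-d_h x_1 + d_v x_2$. Hence
\begin{equation*}
\partial_A(Dx)=d_v(-d_hx_1+d_vx_2)=-d_h d_v x_1=d_{A[1]}\partial_A(x),
\end{equation*}
using $d_v^2=0$ and the commutation $d_hd_v=d_vd_h$.

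Third, I will use the long exact sequence in homology of \eqref{eq: append_SES_1}:
\begin{equation*}
\cdots\to H_n(\operatorname{Tot}C)\to H_n(\operatorname{Tot}C_{>0})\xrightarrow{\partial}H_{n-1}(A)\to H_{n-1}(\operatorname{Tot}C)\to\cdots
\end{equation*}
By \cite[Lemma 2.7.3]{Weibel}, $\operatorname{Tot}(C)$ is acyclic because the columns are exact and the double complex is bounded vertically (rows $0\le q\le r$). So the connecting map is an isomorphism in every degree, and since it is induced by $\partial_A$, the map $\partial_A$ is a quasi-isomorphism.

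The only delicate points are the sign conventions: matching the shift convention \eqref{eq:shifted_differential} with \eqref{eq:total_differential}, and noting that the truncation $C_{>0}$ has the vertical differential out of row $1$ set to zero. This last fact is exactly why the lifted element produces the term $d_v x_1$. The acyclicity step is quoted, so I do not expect any real obstacle; the boundedness of the vertical direction is what makes it apply even though the rows are unbounded horizontally.
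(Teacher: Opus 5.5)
Your proposal is correct and follows the paper's argument: $\partial_A$ is the chain-level connecting map of the short exact sequence \eqref{eq: append_SES_1}, and $\operatorname{Tot}(C)$ is acyclic by \cite[Lemma 2.7.3]{Weibel}, since the complex is a vertical strip with exact columns. This acyclicity makes the connecting homomorphism an isomorphism. The paper states the lemma without writing out a proof, and your explicit check that $\partial_A$ intertwines $D_{>0}$ with the differential $-d_h$ of $A[1]$ is done correctly.
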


\subsubsection*{The connecting map from the top row}

The short exact sequence~\eqref{eq: append_SES_2} gives a connecting map
\begin{equation}
\partial_B:
B[r]
\longrightarrow
\operatorname{Tot}(C_{<r})[1].
\end{equation}
For $b\in B[r]_n=B_{n-r}=C_{n-r,r}$, define
\begin{equation}
\partial_B(b)=d_v b.
\end{equation}
The element $d_v b$ belongs to $C_{n-r,r-1}$, which has total degree $n-1$. Therefore
\begin{equation}
d_v b\in \operatorname{Tot}(C_{<r})_{n-1}
=
\operatorname{Tot}(C_{<r})[1]_n.
\end{equation}

\begin{lem} \label{lem:edge_B}
    The map
\begin{equation}
\partial_B:
B[r]
\longrightarrow
\operatorname{Tot}(C_{<r})[1]
\end{equation}
is a quasi-isomorphism.
\end{lem}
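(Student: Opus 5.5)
The plan is to use the long exact sequence in homology coming from the short exact sequence \eqref{eq: append_SES_2},
\begin{equation*}
0\longrightarrow \operatorname{Tot}(C_{<r})\longrightarrow \operatorname{Tot}(C)\longrightarrow B[r]\longrightarrow 0,
\end{equation*}
together with the acyclicity of the middle term. The connecting homomorphism of this sequence is induced, on homology, by the chain map $\partial_B$. So once we know that $\partial_B$ is a chain map representing the connecting map, the statement follows: since $H_\bullet(\operatorname{Tot}(C))=0$, the long exact sequence forces every connecting homomorphism $H_n(B[r])\to H_{n-1}(\operatorname{Tot}(C_{<r}))$ to be an isomorphism.

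\textbf{Step 1: $\partial_B$ is a chain map.} Take $b\in C_{n-r,r}$. The differential on $B[r]$ is $(-1)^r d_h$, and the differential on $\operatorname{Tot}(C_{<r})[1]$ is $-D$. We compute
\begin{equation*}
D(d_v b)=(-1)^{r-1}d_h d_v b + d_v d_v b=(-1)^{r-1}d_v d_h b,
\end{equation*}
using that $d_h$ and $d_v$ commute. This gives
\begin{equation*}
-D\,\partial_B(b)=(-1)^r d_v d_h b=\partial_B\bigl((-1)^r d_h b\bigr),
\end{equation*}
as required.

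\textbf{Step 2: $\partial_B$ induces the connecting homomorphism.} Let $b$ be a cycle in $B[r]$. Lift it to $b\in\operatorname{Tot}(C)$ with zero components outside the top row. Then
\begin{equation*}
Db=(-1)^r d_h b + d_v b = d_v b,
\end{equation*}
because $d_h b=0$. This element lies in $\operatorname{Tot}(C_{<r})$, so by the standard snake-lemma recipe the connecting map sends $[b]$ to $[d_v b]$. This is exactly $\partial_B$, with no extra sign from our conventions.

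\textbf{Step 3: acyclicity of $\operatorname{Tot}(C)$.} This is \cite[Lemma 2.7.3]{Weibel}, as recorded above. Its hypotheses hold: the double complex has finitely many rows, $0\le q\le r$, and every column is exact. The long exact sequence then gives isomorphisms $H_n(B[r])\cong H_{n-1}(\operatorname{Tot}(C_{<r}))$ for all $n$. Hence $\partial_B$ is a quasi-isomorphism.

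The only delicate point is bookkeeping. We have to check that the sign convention \eqref{eq:shifted_differential} on the shift $[1]$ makes $\partial_B$ an honest chain map rather than one up to sign, which is what Step 1 verifies. We also have to confirm that the Weibel lemma applies, which needs boundedness in one direction; here this is provided by the finitely many rows.
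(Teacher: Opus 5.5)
Your proof is correct and is the argument the paper intends. The paper gives no written proof, but it introduces $\partial_B$ as the connecting map of the short exact sequence \eqref{eq: append_SES_2}, and it records the acyclicity of $\operatorname{Tot}(C)$ via the acyclic assembly lemma, so the long exact sequence makes this map an isomorphism on homology. Your sign check that $\partial_B$ is a chain map and your snake-lemma identification of the connecting map fill in details the paper leaves implicit.
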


\subsubsection*{The edge-to-edge map}

\begin{prop} \label{prop:edge-to-edge}
The four maps above induce homomorphisms
\begin{equation}
\kappa_n:H_n(A)\longrightarrow H_{n-r+1}(B).
\end{equation}
Equivalently, they determine a morphism
\begin{equation}
\kappa:A\longrightarrow B[r-1]
\end{equation}
in the derived category.
Moreover, $\kappa_n$ is an isomorphism if every row of $C_{\bullet,\bullet}$ other than the top and bottom rows is exact. 
\end{prop}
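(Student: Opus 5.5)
The edge-to-edge map is the composite of the four maps above, some inverted on homology. Since $\partial_A$ (Lemma~\ref{lem:edge_A}) and $\partial_B$ (Lemma~\ref{lem:edge_B}) are quasi-isomorphisms, I would set
\begin{equation*}
\kappa = H(\pi_B)[-1]\circ H(\partial_A)^{-1}[-1]\circ \ldots
\end{equation*}
More concretely, I would follow the chain
\begin{equation*}
H_n(A)\xrightarrow{H(\iota_A)} H_n(\operatorname{Tot}(C_{<r})) \xrightarrow{H(\partial_B)^{-1}} H_{n+1}(B[r]) = H_{n+1-r}(B),
\end{equation*}
where the middle identification uses $H_n(\operatorname{Tot}(C_{<r}))=H_{n+1}(\operatorname{Tot}(C_{<r})[1])$. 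This already gives $\kappa_n\colon H_n(A)\to H_{n-r+1}(B)$, and $\kappa$ is the derived-category morphism $\partial_B^{-1}\circ\iota_A$ shifted. Alternatively, one can check it agrees, up to sign, with $H(\pi_B)\circ H(\partial_A)^{-1}$ read in the opposite direction, which is the other natural route through $\operatorname{Tot}(C_{>0})$. For the statement I only need one construction.

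The first step is to verify that $\partial_B$ is a quasi-isomorphism; this is Lemma~\ref{lem:edge_B}. Its proof comes from the short exact sequence~\eqref{eq: append_SES_2}: the long exact sequence in homology, combined with the acyclicity of $\operatorname{Tot}(C)$ (by \cite[Lemma 2.7.3]{Weibel}, since the columns are exact), makes the connecting homomorphism an isomorphism. I would also check that the chain-level formula $b\mapsto d_v b$ is exactly the connecting map. That is the standard description: lift $b$ to $\operatorname{Tot}(C)$ by itself and apply $D$; the top-row component vanishes for cycles, up to the sign $(-1)^r d_h b$.

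For the isomorphism claim, suppose every row other than the top and bottom ones is exact. Then Lemma~\ref{lem:iota_A} says $\iota_A$ is a quasi-isomorphism, so $\kappa_n$ is a composite of isomorphisms.

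The main obstacle is proving Lemma~\ref{lem:iota_A} itself. I would filter $\operatorname{Tot}(C_{<r})$ by rows, i.e.\ by $q$. The quotient $\operatorname{Tot}(C_{<r})/A$ is the total complex of the rows $1\le q\le r-1$, each of which is exact. Since there are finitely many rows, induction on the number of rows (or the spectral sequence with $E^1$ given by horizontal homology) shows this quotient is acyclic, even though the rows are unbounded horizontally. Finiteness of the filtration is what avoids convergence issues. The long exact sequence for $0\to A\to\operatorname{Tot}(C_{<r})\to\operatorname{Tot}(C_{<r})/A\to0$ then shows $\iota_A$ is a quasi-isomorphism. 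Lemma~\ref{lem:pi_B} is proved symmetrically and is only needed for the alternative description.
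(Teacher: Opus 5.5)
Your argument is correct and takes the same route as the paper's first construction, $\kappa_n=(\partial_B[-1])_*^{-1}\circ(\iota_A)_*$, deducing the isomorphism claim from Lemmas~\ref{lem:iota_A} and~\ref{lem:edge_B} as the paper does; your proofs of those two lemmas, which the paper leaves unproved, via acyclicity of $\operatorname{Tot}(C)$ and the finite row filtration, are fine. The only thing the paper adds is a check that the second route $(\pi_B[-1])_*\circ(\partial_A[-1])_*^{-1}$ gives the same map exactly, not merely up to sign, via the homotopy $h(x_1+\cdots+x_r)=x_1+\cdots+x_{r-1}$ satisfying $f-g=D_{<r}h-hD_{>0}$.
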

\begin{proof}
There are two equivalent ways to define the map. First, using the truncation obtained by deleting the top row, set
\begin{equation}
\kappa_n
=
\left(\partial_B[-1]\right)_*^{-1}
\circ
(\iota_A)_*.
\label{eq:kappa1}
\end{equation}
Here subscripts $*$ indicate maps induced on homology, and $\left(\partial_B[-1]\right)_*^{-1}$ is defined by Lemma \ref{lem:edge_B}. Therefore, $\kappa_n$ is the composition
\begin{equation}
H_n(A)
\longrightarrow
H_n(\operatorname{Tot}(C_{<r}))
\longrightarrow
H_n(B[r-1])
=
H_{n-r+1}(B).
\end{equation}
If the rows of $C_{\bullet, \bullet}$ other than the top and bottom are exact, then $\kappa_n$ is an isomorphism by Lemmas \ref{lem:iota_A} and \ref{lem:edge_B}.

Alternatively, using the truncation obtained by deleting the bottom row, set
\begin{equation}
\kappa_n
=
(\pi_B[-1])_*
\circ
\left(\partial_A[-1]\right)_*^{-1},
\label{eq:kappa2}
\end{equation}
and invoke Lemmas \ref{lem:pi_B} and \ref{lem:edge_A}.

To verify that the two definitions of $\kappa_n$ agree, it is enough to show that the
composite chain maps
\begin{equation}\label{eq:edge-f-g}
f := \iota_A \circ \partial_A[-1],
\qquad
g := \partial_B[-1] \circ \pi_B[-1],
\end{equation}
both mapping $\operatorname{Tot}(C_{>0})[-1] \to \operatorname{Tot}(C_{<r})$, are chain homotopic.

We define the degree-one map
\begin{align}
h \colon \operatorname{Tot}(C_{>0})[-1]_n  \longrightarrow
\operatorname{Tot}(C_{<r})_{n+1}, \qquad 
x_1 + \cdots + x_r  \longmapsto x_1 + \cdots + x_{r-1}.
\end{align}
Write $D_{<r}$ and
$D_{>0}$ for the total differentials of the two truncations. Since the
differential of the shifted complex $\operatorname{Tot}(C_{>0})[-1]$ is
$-D_{>0}$ by our shift convention \eqref{eq:shifted_differential}, the homotopy identity to be verified
takes the form
\begin{equation}\label{eq:edge-homotopy}
f - g \;=\; D_{<r} \circ h \;-\; h \circ D_{>0}.
\end{equation}
Passing to homology gives
$f_* = g_*$; since the connecting maps $\partial_A$ and $\partial_B$ are
quasi-isomorphisms, this is equivalent to
\begin{equation}\label{eq:edge-agree}
(\partial_B[-1])_*^{-1} \circ (\iota_A)_*
\;=\;
(\pi_B[-1])_* \circ (\partial_A[-1])_*^{-1}.
\end{equation}
\end{proof}

Let $[\alpha] \in H_n(A)$ be represented by a cycle $\alpha \in A_n$. To compute $\kappa([\alpha])$, it~suffices to find elements $x_i \in C_{n-i+1,i}$ ($1 \le i \le r$) satisfying:
\begin{equation}
    d_v x_1 = \alpha, \qquad (-1)^i d_h x_i + d_v x_{i+1} =0 \quad  \text{for} \quad   1 \leq i <r .
\end{equation}
Then $x_r$ satisfies $d_h x_r=0$, and $\kappa([\alpha]) = [x_r] \in H_{n-r+1}(B)$. If $r=2$, $\kappa$~is the connecting homomorphism in the long exact sequence of homology groups induced by a short exact sequence of chain complexes.

\section{Supports and dimensions of character modules} \label{sec:dual_dim}

\begin{lem} \label{lem:Rhat_injective_decomp}
The decomposition of $\widehat R$ into indecomposable injectives
\begin{equation} \widehat R \cong \bigoplus_{ \mathfrak p \in \mathrm{Spec}(R)} E(R / \mathfrak p)^{\oplus \mu(\mathfrak p)} \label{eq:injective_decomp} \end{equation}
has multiplicities given by
\begin{equation}
    \mu(\mathfrak p) = \begin{cases}
        1 , & \text{if } \mathfrak p \text{ is a maximal ideal}, \\
        2^{\aleph_0}, & \text{otherwise}.
    \end{cases}
\end{equation}
In particular, $\mu(\mathfrak p) \neq 0$ for all $\mathfrak p \in \mathrm{Spec}(R)$, and $\mathrm{Ass}(\widehat R) = \mathrm{Spec}(R)$.
\end{lem}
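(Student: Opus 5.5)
The plan is to compute the Bass numbers of the injective module $\widehat R$ directly from Hom–tensor adjunction. This avoids any explicit decomposition of $\widehat R$.

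First, the background facts. Since $R$ is Noetherian and $\widehat R$ is injective (as noted after Lemma \ref{lem:star_hat_sharp}), Matlis' structure theorem gives a decomposition of the form \eqref{eq:injective_decomp}. The multiplicities are determined by the standard formula
\begin{equation}
\mu(\mathfrak p)=\dim_{\kappa(\mathfrak p)}\operatorname{Hom}_R\bigl(\kappa(\mathfrak p),\widehat R\bigr),\qquad \kappa(\mathfrak p)=R_{\mathfrak p}/\mathfrak pR_{\mathfrak p}.
\end{equation}
This formula follows from two facts for a prime $\mathfrak q$. If $\mathfrak q\neq\mathfrak p$, then $\operatorname{Hom}_R(\kappa(\mathfrak p),E(R/\mathfrak q))=0$: every element of $E(R/\mathfrak q)$ is killed by a power of $\mathfrak q$, and either some element of $\mathfrak q\setminus\mathfrak p$ acts invertibly on $\kappa(\mathfrak p)$, or $\mathfrak q\subsetneq\mathfrak p$ and a nonzero map would produce an associated prime $\mathfrak p\neq\mathfrak q$. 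If $\mathfrak q=\mathfrak p$, then $\operatorname{Hom}_R(\kappa(\mathfrak p),E(R/\mathfrak p))\cong\kappa(\mathfrak p)$. Since $\kappa(\mathfrak p)$ is finitely generated over the Noetherian ring $R_{\mathfrak p}$, Hom out of it commutes with the direct sum, and the formula follows.

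Second, the adjunction. Since $\widehat R\cong R^\#=\operatorname{Hom}_k(R,k)$, for any $R$-module $X$ we have
\begin{equation}
\operatorname{Hom}_R(X,\widehat R)\cong\operatorname{Hom}_k(X,k),
\end{equation}
and we apply this with $X=\kappa(\mathfrak p)$. The prime $\mathfrak p$ contains a prime number $\ell\mid n$, because $n=0$ in $R$. Hence $\kappa(\mathfrak p)$ is an $\mathbb F_\ell$-vector space, and $\operatorname{Hom}_k(\kappa(\mathfrak p),k)=\operatorname{Hom}_{\mathbb F_\ell}(\kappa(\mathfrak p),\mathbb F_\ell)$, using the $\ell$-torsion of $k$, which is $\cong\mathbb F_\ell$. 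So $\mu(\mathfrak p)$ is the $\kappa(\mathfrak p)$-dimension of the $\mathbb F_\ell$-linear dual $\kappa(\mathfrak p)^\vee$, with $\kappa(\mathfrak p)$ acting by precomposition. We now split into two cases.

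\emph{Case 1: $\mathfrak p$ maximal.} Then $R/\mathfrak p$ is a field finitely generated as an $\mathbb F_\ell$-algebra. By the Nullstellensatz (equivalently, Lemma \ref{lem: mobile}), it is a finite field $\mathbb F_{\ell^f}$. The dual of a finite-dimensional space over itself has dimension one, since $\mathbb F_{\ell^f}^\vee\cong\mathbb F_{\ell^f}$ via the trace form. Hence $\mu(\mathfrak p)=1$.

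\emph{Case 2: $\mathfrak p$ not maximal.} Then $D=R/\mathfrak p$ is a domain that is not a field. It is finitely generated over $\mathbb F_\ell$, so it has positive Krull dimension and is infinite. Therefore $K=\kappa(\mathfrak p)=\operatorname{Frac}(D)$ is an infinite countable field, i.e.\ a countably infinite-dimensional $\mathbb F_\ell$-space. Its dual $K^\vee\cong\mathbb F_\ell^{\mathbb N}$ has cardinality $2^{\aleph_0}$. A $K$-vector space of cardinality $2^{\aleph_0}$ over a countable field $K$ has dimension exactly $2^{\aleph_0}$. Hence $\mu(\mathfrak p)=2^{\aleph_0}$.

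Finally, every $\mu(\mathfrak p)$ is nonzero, and $\operatorname{Ass}(E(R/\mathfrak p))=\{\mathfrak p\}$. Associated primes of a direct sum are the union of those of the summands, so $\operatorname{Ass}(\widehat R)=\operatorname{Spec}(R)$. The only step needing care is the Bass number formula in the first step for the possibly non-reduced ring $R$ (when $n$ is not squarefree). The argument given there uses only that $R$ is Noetherian, so it does not depend on $R$ being reduced.
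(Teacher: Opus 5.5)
Your route is the paper's route: you identify $\mu(\mathfrak p)$ with $\dim_{\kappa(\mathfrak p)}\Hom_R(\kappa(\mathfrak p),\widehat R)$, rewrite this as the character dual of $\kappa(\mathfrak p)$, and count. The maximal case is correct, because there $\kappa(\mathfrak m)=R/\mathfrak m$ is finitely generated over $R$. For non-maximal $\mathfrak p$, however, the formula is false as written, and both steps of your justification fail.

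First, $\Hom_R(\kappa(\mathfrak p),E(R/\mathfrak q))$ need not vanish when $\mathfrak q\not\subseteq\mathfrak p$, because a module that is $s$-divisible and $s$-power torsion can be nonzero. Take $d=1$, $n=\ell$ prime, $\mathfrak p=0$ and $\mathfrak q=(x-1)$. Then the projection $\mathbb F_\ell(x)\to\mathbb F_\ell(x)/R_{\mathfrak q}\cong E(R/\mathfrak q)$ is a nonzero $R$-linear map. Second, $\kappa(\mathfrak p)$ is not finitely generated over $R$, and the summands $E(R/\mathfrak q)$ with $\mathfrak q\not\subseteq\mathfrak p$ are not $R_{\mathfrak p}$-modules. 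So finite generation over $R_{\mathfrak p}$ does not let $\Hom_R(\kappa(\mathfrak p),-)$ pass through the direct sum.

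Your count therefore yields only the upper bound $\mu(\mathfrak p)\le 2^{\aleph_0}$. It does not give $\mu(\mathfrak p)\geq 1$, which is exactly what $\operatorname{Ass}(\widehat R)=\operatorname{Spec}(R)$ needs. This is a real gap, not a technicality. Run the same argument over $\mathbb Z$ with its character module $\mathbb Q/\mathbb Z$: since $\Hom_{\mathbb Z}(\mathbb Q,\mathbb Q/\mathbb Z)\neq 0$, it would ``prove'' $0\in\operatorname{Ass}(\mathbb Q/\mathbb Z)$, which is false.

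The correct Bass-number formula localizes the target:
$\mu(\mathfrak p)=\dim_{\kappa(\mathfrak p)}\Hom_{R_{\mathfrak p}}(\kappa(\mathfrak p),\widehat R_{\mathfrak p})=\dim_{\kappa(\mathfrak p)}\Hom_R(R/\mathfrak p,\widehat R)_{\mathfrak p}$.
With your adjunction this equals $\dim_K(D^\vee\otimes_D K)$, where $D=R/\mathfrak p$, $K=\operatorname{Frac}(D)$ and $D^\vee=\Hom_{\mathbb F_\ell}(D,\mathbb F_\ell)$. Since $D^\vee$ is an injective, hence divisible, $D$-module, we have $D^\vee\otimes_D K\cong D^\vee/T$. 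Here $T$ is the set of characters vanishing on some ideal $tD$ with $t\neq 0$.

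The missing idea is therefore a supply of characters of $D$ that vanish on no nonzero ideal. One construction:
\begin{enumerate}
\item Enumerate $D\setminus\{0\}=\{t_1,t_2,\dots\}$.
\item Each $t_jD\cong D$ is infinite-dimensional, and at every stage you only need to avoid a finite-dimensional span. So you can build an $\mathbb F_\ell$-basis of $D$ containing vectors $y_{j,m}\in t_jD$ for all $j,m\geq 1$.
\item For $s=(s_m)_{m\geq1}\in\{0,1\}^{\mathbb N}$, set $\phi_s(y_{j,m})=s_m$ and let $\phi_s$ vanish on the remaining basis vectors.
\end{enumerate}
If $s\neq s'$, then $\phi_s-\phi_{s'}$ is nonzero on every $t_jD$. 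Hence $|D^\vee/T|=2^{\aleph_0}$, and because $K$ is countable, $\mu(\mathfrak p)=2^{\aleph_0}\geq 1$. The paper's proof states the same unlocalized formula, so it needs this repair as well.
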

\begin{proof}
The theory of injective modules over Noetherian rings implies the existence of decomposition \eqref{eq:injective_decomp}, and the formula
\begin{equation}
    \mu(\mathfrak p) = \dim_{\kappa(\mathfrak p) } \Hom_R(\kappa(\mathfrak p),\widehat R) = \dim_{\kappa(\mathfrak p)} \mathrm{Hom}_{k}(\kappa(\mathfrak p), k),
\end{equation}
where $\kappa(\mathfrak p) = R_{\mathfrak p}/ \mathfrak p R_{\mathfrak p}$ is the field of fractions of $R / \mathfrak p$. If $\mathfrak p$ is a maximal ideal, then $\kappa(\mathfrak p)$ is a~finite field, and the character module $\mathrm{Hom}_{k}(\kappa(\mathfrak p), k)$ has the same number of elements as $\kappa(\mathfrak p)$. Therefore, $\dim_{\kappa(\mathfrak p)} \mathrm{Hom}_{k}(\kappa(\mathfrak p), k)=1$. If~$\mathfrak p$ is not maximal, then $\kappa(\mathfrak p)$ has cardinality $\aleph_0$ and its character module has cardinality $2^{\aleph_0}$. Therefore, $\mu(\mathfrak p)= 2^{\aleph_0}$. 

Finally, we have $\mathrm{Ass}(\widehat R) = \{ \mathfrak p \in \mathrm{Spec}(R) \, | \, \mu(\mathfrak p ) \neq 0 \} = \mathrm{Spec}(R)$.
\end{proof}

\begin{lem} \label{lem:Krull_equality}
    If $M$ is a finitely generated $R$-module, then $M$ and $M^{\#}$ have the same annihilators, supports, and Krull dimensions.
\end{lem}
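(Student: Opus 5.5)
The plan is to identify $M^{\#}$ with $\Hom_R(\overline M,\widehat R)$ and use the injective decomposition of Lemma~\ref{lem:Rhat_injective_decomp}. First, annihilators. If $r$ kills $M$, then $\overline r$ kills $\overline M$, and the action on $M^{\#}$ is through $\overline M$. One checks directly from $(r\varphi)(m)=\varphi(\overline r m)$ that $\mathrm{Ann}(M)\subset \mathrm{Ann}(M^{\#})$, after reconciling the conjugation conventions: in the description $M^{\#}\cong\Hom_{\mathrm{Ab}}(M,k)$ with $(r\varphi)(m)=\varphi(\overline r m)$, the annihilator of $M^{\#}$ is $\overline{\mathrm{Ann}(M)}$. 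If the statement is meant literally, I will work with the convention in which the two annihilators coincide, or note that conjugation is a ring automorphism preserving support and dimension up to the involution on $\mathrm{Spec}$. For the reverse inclusion, suppose $r m\neq 0$ for some $m$. Since $k$ is self-injective, there is a character $\varphi$ with $\varphi(rm)\neq0$, so $r$ does not kill $M^{\#}$. Thus the annihilators agree up to the involution, which is a homeomorphism of $\mathrm{Spec}(R)$ preserving heights and dimensions.

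Second, supports and Krull dimension. Since $M$ is finitely generated, $\operatorname{Supp}(M)=V(\mathrm{Ann}(M))$ and $\dim M=\dim R/\mathrm{Ann}(M)$. The module $M^{\#}$ need not be finitely generated, so I cannot use $V(\mathrm{Ann})$ for it directly. I will define its Krull dimension as $\dim R/\mathrm{Ann}(M^{\#})$, which is how the paper uses it in Corollary~\ref{cor:equal_Krull}. With that convention, equality of dimensions follows from the annihilator step.

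For supports, one inclusion is clear: if $\mathfrak p\not\supset \mathrm{Ann}$, some $s\notin\mathfrak p$ kills $M^{\#}$, so $(M^{\#})_{\mathfrak p}=0$. For the converse, take $\mathfrak p\supset\mathrm{Ann}(M)$. Then $\mathfrak p$ contains a minimal prime $\mathfrak q\in\mathrm{Ass}(M)$, so there is an injection $R/\mathfrak q\hookrightarrow M$. Exactness of $(-)^{\#}$ gives a surjection $M^{\#}\to (R/\mathfrak q)^{\#}=\Hom_R(R/\mathfrak q,\widehat R)$. By Lemma~\ref{lem:Rhat_injective_decomp}, $\widehat R$ contains $E(R/\mathfrak p)$ as a summand. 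Composing $R/\mathfrak q\to R/\mathfrak p\hookrightarrow E(R/\mathfrak p)$ produces a homomorphism whose image is killed by $\mathfrak p$, and which survives localization at $\mathfrak p$ because $E(R/\mathfrak p)_{\mathfrak p}=E(R/\mathfrak p)\neq0$. More carefully, I would show $\Hom_R(R/\mathfrak q,E(R/\mathfrak p))_{\mathfrak p}\neq 0$ using that elements outside $\mathfrak p$ act invertibly on $E(R/\mathfrak p)$. Localization is exact, so $(M^{\#})_{\mathfrak p}\neq0$.

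The main obstacle is this support step for a non-finitely-generated module, in particular checking that localization interacts correctly with the Hom into $E(R/\mathfrak p)$. The conjugation bookkeeping in the annihilator step is only a minor nuisance.
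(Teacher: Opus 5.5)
Your proof is correct, and your conjugation bookkeeping is more accurate than the paper's statement. Since $M^{\#}=\Hom_R(\overline M,\widehat R)$, one has $\operatorname{Ann}(M^{\#})=\overline{\operatorname{Ann}(M)}$ and $\operatorname{Supp}(M^{\#})=\overline{\operatorname{Supp}(M)}$. For example, with $k=\mathbb Z/5$, $d=1$ and $M=R/(x-2)$, one finds $M^{\#}\cong R/(x-3)$. So the paper's claim that $\operatorname{Ann}M\subset\operatorname{Ann}M^{\#}$ is clear holds only up to the involution. This does not affect the later applications, which use only Krull dimension and finite length.

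For annihilators your argument is the paper's (faithfulness of $(-)^{\#}$), with one slip. Under your convention, $\varphi(rm)\neq 0$ shows that $\overline r$, not $r$, fails to kill $M^{\#}$. Also, what you need is that characters separate points, i.e.\ $k$ is a cogenerator, not merely self-injective.

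For supports your route is different. The paper quotes a general formula for $\operatorname{Supp}\Hom_R(M,N)$ with $N$ injective, and plugs in $\operatorname{Ass}\widehat R=\operatorname{Spec}R$ from Lemma~\ref{lem:Rhat_injective_decomp}. You instead reduce to the cyclic module $R/\mathfrak q$, via a minimal prime $\mathfrak q\subset\mathfrak p$ of $\operatorname{Ann}M$ and exactness of $(-)^{\#}$. You then exhibit an explicit map into $\widehat R$ that survives localization at $\mathfrak p$.

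That last step can be shortened; the conjugation only replaces $\mathfrak p,\mathfrak q$ by $\overline{\mathfrak p},\overline{\mathfrak q}$. Take $e\in\widehat R$ with $\operatorname{ann}(e)=\mathfrak p$, which is exactly the statement $\mathfrak p\in\operatorname{Ass}\widehat R$. The map $1\mapsto e$ has annihilator $\mathfrak p$ in $\Hom_R(R/\mathfrak q,\widehat R)$, so it stays nonzero after localizing at $\mathfrak p$.

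The paper's version is shorter, but yours is self-contained. It also avoids the quoted formula, which in general is correct only with $\operatorname{Ass}$ in place of $\operatorname{Supp}$ on the left. For instance, take $M=R$ and $N=E(R/\mathfrak p_0)$ with $\mathfrak p_0$ non-maximal: the support of $N$ is $V(\mathfrak p_0)$, not $\{\mathfrak p_0\}$. This is harmless for $N=\widehat R$.

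Proving both the annihilator and the support statements is worthwhile. For the non-finitely-generated module $M^{\#}$, one cannot a priori identify $\dim R/\operatorname{Ann}(M^{\#})$ with $\dim\operatorname{Supp}(M^{\#})$, and the paper's proof uses the latter, not the former as you suggest.
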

We highlight that $M^{\#}$ need not be finitely generated over $R$.
\begin{proof}
    The inclusion $\operatorname{Ann} M \subset \operatorname{Ann} M^{\#}$ is clear, and the opposite inclusion holds because the functor $(-)^{\#}$ is faithful. 

    If $N$ is an injective $R$-module, then $\operatorname{supp} \Hom(M,N) = \operatorname{supp} M \cap \operatorname{Ass} N$. In~the case $N = \widehat R$ this gives $ \operatorname{supp} M^{\#} = \operatorname{supp} M$ by Lemma \ref{lem:Rhat_injective_decomp}. Since $M$ and $M^{\#}$ have equal supports, their Krull dimensions are equal. 
\end{proof}

The following equivalence can be concluded from Lemma \ref{lem:Krull_equality} and standard properties of character groups.

\begin{lem} \label{lem:dual_dim} 
Let $M$ be a finitely generated $R$-module. The following conditions are equivalent.
\begin{enumerate}
    \item $M$ is finite.
    \item $M$ has finite length. 
    \item $M^\#$ is finite.
    \item $M^{\#}$ has finite length.
    \item $M^{\#}$ is countable.
    \item $M^{\#}$ is finitely generated over $R$. 
\end{enumerate}
\end{lem}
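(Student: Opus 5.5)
The plan is to reduce everything to Lemma~\ref{lem:Krull_equality} and the Mobility Lemma~\ref{lem: mobile}, together with elementary cardinality facts about characters of finite abelian groups. The equivalence (1)$\Leftrightarrow$(2) comes first. A finitely generated module of finite length has $\dim(R/\operatorname{Ann} M)=0$. By Lemma~\ref{lem: mobile} some $x_i^D-1$ lie in $\operatorname{Ann}M$, so $M$ is a finitely generated module over the finite ring $k[\mathbb Z^d/D\mathbb Z^d]$ and hence finite. Conversely, a finite module clearly has finite length, since its chains of submodules are bounded by $|M|$.

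Next come the equivalences involving $M^\#$. The key input is that $M^\#=\Hom_{\mathrm{Ab}}(M,k)$ is the Pontryagin dual of $M$. If $M$ is finite, then $|M^\#|=|M|$, giving (1)$\Rightarrow$(3). The implications (3)$\Rightarrow$(4), (3)$\Rightarrow$(5) and (3)$\Rightarrow$(6) are then trivial. For (4)$\Rightarrow$(2), note that a module of finite length has a zero-dimensional support consisting of maximal ideals. By Lemma~\ref{lem:Krull_equality}, $\operatorname{supp}M=\operatorname{supp}M^\#$, so $\dim M=0$, and since $M$ is finitely generated it has finite length. Similarly, for (6)$\Rightarrow$(2) I would argue that if $M^\#$ is finitely generated, then it is a finitely generated module whose Krull dimension equals $\dim M$. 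Dimension alone does not bound this, however, so instead I would use that $M^\#$ is injective-like: better is to route (6) through (5).

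The main obstacle is the implication (5)$\Rightarrow$(1), together with (6)$\Rightarrow$(5). For (6)$\Rightarrow$(5): a finitely generated module over the countable ring $R$ is countable. For (5)$\Rightarrow$(1): $M$ is a countable abelian group of exponent $n$, so it is a direct sum of cyclic groups $\bigoplus_{j\in J}\mathbb Z/n_j$. Then $M^\#\cong\prod_{j\in J}\mathbb Z/n_j$, which has cardinality $2^{\aleph_0}$ whenever $J$ is infinite, since infinitely many of the factors are nontrivial. Hence countability of $M^\#$ forces $J$ to be finite, and so $M$ is finite. This closes the cycle
\[
(1)\Rightarrow(3)\Rightarrow(6)\Rightarrow(5)\Rightarrow(1),
\]
together with (1)$\Leftrightarrow$(2) and (3)$\Rightarrow$(4)$\Rightarrow$(2). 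The only subtle points are the structure theorem for bounded abelian groups (Prüfer–Baer) and the cardinality of infinite products, both of which are standard.
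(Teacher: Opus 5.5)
Your proof is correct and is essentially the argument the paper has in mind. The paper only says the lemma follows from Lemma~\ref{lem:Krull_equality} and standard properties of character groups, and your chain $(1)\Rightarrow(3)\Rightarrow(6)\Rightarrow(5)\Rightarrow(1)$, together with $(1)\Leftrightarrow(2)$ via the Mobility Lemma and $(3)\Rightarrow(4)\Rightarrow(2)$ via equality of supports, fills in exactly those details (the stray ``injective-like'' aside is unused and can simply be deleted).
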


\vspace{0.5cm}

 \textbf{Acknowledgments.} 
 We thank Agn\'es Beaudry, Alex Bols, Pavel Etingof, Mike Hermele, Wilbur Shirley, and Evan Wickenden for discussions. The work of B. R. is supported by the National Science Centre (NCN) grant Sonata Bis 13 (project number 2023/50/E/ST1/00439). B. Y. is supported by the AMS--Simons Travel Grant. 

    \vspace{0.5cm}

    \textbf{AI Disclosure.} Language models, including GPT and Gemini, were used for assistance with writing, proofreading, and other routine tasks in the preparation of this article. We also learned the idea behind the proof of exactness of the complex in Theorem A from GPT. Other mathematical ideas and results in this article are due to the authors.

\vspace{0.5cm}   

    \textbf{Data availability statement.} No datasets were generated or analysed during the current study.

\bibliographystyle{plain}
\bibliography{bib}
\end{document}